\documentclass{head/cernrep}

\usepackage{bbm}
\usepackage{xcolor}
\usepackage{mathrsfs}
\usepackage{amsthm}
\usepackage{natbib}
\usepackage{mathtools}
\usepackage{enumerate}
\usepackage{hyperref}
\usepackage{comment}
\usepackage{ulem}

\newcommand{\dif}{\mathrm{d}}
\newcommand{\bP}{\mathbb{P}}
\newcommand{\bE}{\mathbb{E}}
\newcommand{\bG}{\mathbb{G}}
\newcommand{\bR}{\mathbb{R}}

\newcommand{\bS}{\mathbb{S}}
\newcommand{\bN}{\mathbb{N}}
\newcommand{\eps}{\varepsilon}

\newcommand{\ind}{\mathbf{1}}

\newcommand{\cG}{\mathcal{G}}
\newcommand{\cF}{\mathcal{F}}
\newcommand{\cS}{\mathcal{S}}

\newcommand{\Dkonv}{\stackrel{\mathcal{D}}{\longrightarrow}}

\theoremstyle{remark}
\newtheorem{remark}{Remark}[section]
\theoremstyle{plain}
\newtheorem{proposition}{Proposition}[section]
\theoremstyle{plain}
\newtheorem{lemma}{Lemma}[section]
\theoremstyle{plain}
\newtheorem{theorem}{Theorem}[section]
\theoremstyle{plain}
\newtheorem{corollary}{Corollary}[section]
\theoremstyle{definition}

\begin{document}

\title{Adaptivity of the NPMLE to finitely discrete mixing distributions in Gaussian/Poisson mixtures}
\author{Yan Zhang, Stanislav Volgushev}
\institute{University of Toronto}

\begin{abstract}
We study the nonparametric maximum likelihood estimator (NPMLE) for Gaussian and Poisson mixture models, assuming the support of the true mixing distribution lies in a fixed bounded set. In this setting, we establish  exact parametric rates for both, marginal density estimation and the posterior mean when the true mixing distribution is finitely discrete. Moreover, we show that the NPMLE attains the optimal demixing rate previously known for overparameterized finite mixture models. Finally, we identify a new adaptivity phenomenon for inference: the likelihood ratio test statistic is asymptotically tight if and only if the true mixing distribution is finitely discrete.
\end{abstract}

\maketitle

\section{Introduction}

In recent years, the nonparametric maximum likelihood estimator (NPMLE) for mixture models has emerged as a powerful tool across a broad range of tasks, including Gaussian denoising \citep{saha2020nonparametric}, multiple hypothesis testing \citep{ignatiadis2025empirical}, and natural language modeling \citep{han2025besting}; see also \cite{koenker_gu_2026}. Among mixture models, Gaussian location mixtures have been studied most extensively \citep{zhang2009generalized,jiang2009general,saha2020nonparametric,soloff2025multivariate}. Much of the existing literature works in a fully nonparametric regime, imposing minimal structure on the mixing distribution and, in particular, not presuming bounded support. In that setting, the convergence rates for density estimation and empirical Bayes procedures can adapt to the tail behavior of the true mixing distribution. In this paper, we consider a complementary regime in which the support of the mixing distribution is known a priori to lie in a fixed bounded set. For Gaussian and Poisson mixture models under this assumption, we establish new adaptivity phenomena for the NPMLE.

To formalize the model class, let \(\Theta \subseteq \mathbb{R}^d\) be the parameter space and consider a family of densities \(\{p_\theta:\theta\in\Theta\}\) on \(\mathbb{R}^p\), each defined with respect to a common reference measure \(\mu\). Let \(\mathcal G\) denote the set of all probability measures supported on \(\Theta\). For \(g\in\mathcal G\), define the corresponding mixture density
\[
f_g(x)\coloneqq \int_{\Theta} p_\theta(x)\,\mathrm{d}g(\theta).
\]
The measure \(g\) is referred to as the \textit{mixing distribution}. We observe an i.i.d.\ sample \(X_1,\ldots,X_n\sim f_{g_0}\) for some \(g_0\in\mathcal G\). The NPMLE of the mixing distribution is any maximizer
\[
\hat g \in \arg\max_{g\in\mathcal G}\,\ell_n(f_g),
\]
where \(\ell_n(f)\coloneqq \sum_{i=1}^n \log f(X_i)\) is the log-likelihood. In practice, the exact NPMLE is not computable and approximate maximizers of the likelihood need to be considered instead. A natural estimator of the marginal density $f_{g_0}$ and the posterior mean
\[
\bE_{g_0}[\theta| x]\coloneqq
\frac{\int_\Theta \theta\, p_{\theta}(x)\,\dif g_0(\theta)}{f_{g_0}(x)}.
\]
can be obtained from the plug-in principle.

In the case where \(p_\theta\) is Gaussian with location parameter $\theta$ and known, fixed variance, the first explicit adaptive-rate results for the NPMLE in the univariate location model appear in \citet{zhang2009generalized}. Under \(\Theta=\mathbb{R}\), that work showed that the plug-in estimator \(f_{\hat g}\) converges to the true marginal density \(f_{g_0}\) at a rate that depends on (and effectively adapts to) the tail behavior of \(g_0\). The associated adaptive empirical Bayes theory was developed in parallel by \citet{jiang2009general}. These results were extended to the multivariate Gaussian case by \citet{saha2020nonparametric}. A further insight of \citet{saha2020nonparametric} is that when \(g_0\) is supported on a bounded set \(S\), the NPMLE can also adapt to the ``size'' of \(S\). In particular, when \(S\) is discrete with \(K\) support points, the estimation error is nearly parametric---on the order of \(K/n\) up to polylogarithmic factors. Determining whether the logarithmic factor is necessary has remained open. We show that it can be removed when \(\Theta\) is known and bounded. 

When \(p_\theta\) is Poisson, the adaptivity story has largely centered on empirical Bayes estimation and, in particular, on the comparison between \(g\)-modeling via the NPMLE and \(f\)-modeling via the classical Robbins estimator \citep{polyanskiy2021sharp,shen2022empirical,jana2025optimal}. In the light-tail regime---for instance, when the true mixing distribution is compactly supported or subexponential---both approaches attain the minimax regret rate. In contrast, in heavier-tailed regimes where \(g_0\) is only assumed to have a bounded moment of some order, the NPMLE achieves the minimax rate up to logarithmic factors, whereas \(f\)-modeling can be polynomially suboptimal. Related phenomena also appear for marginal density estimation as a by-product of these works. Interestingly, we focus on an extremely light-tail regime in which \(g_0\) is finitely discrete, and show that the NPMLE again exhibits superior performance. For the Poisson case, another intriguing result showing parametric properties of the NPMLE in a ``local'' sense was established in~\citet{lambert1984asymptotic}, see Remark~\ref{rem:poisson} for more details. 

Despite the possibility of achieving nearly parametric performance for density estimation and empirical Bayes procedures, recovering the mixing distribution itself (i.e., demixing) is an intrinsically difficult problem, so even the NPMLE cannot generally be expected to perform well for this task. 
In the Gaussian location model, \citet{dedecker2013minimax} showed that the minimax rate under Wasserstein loss is necessarily of logarithmic order, and \citet{soloff2025multivariate} established that the NPMLE attains this benchmark. A corresponding slow rate for Poisson mixtures was obtained by \citet{miao2024fisher}. This difficulty can be understood through the self-regularization phenomenon identified by \citet{polyanskiy2020self}. They showed that, for a broad class of mixture models, when the true mixing distribution has bounded support, the NPMLE typically has only \(O(\log n)\) support points. Consequently, if \(g_0\) is continuous (for instance, uniform on an interval), then the Wasserstein distance \(W_1(\hat g,g_0)\) cannot decay faster than this order. A natural conjecture is that faster demixing should be possible when \(g_0\) itself is finitely discrete. Such demixing adaptivity was obtained in \citet{soloff2025multivariate} in the special case where \(g_0\) is a point mass. In this work, under the additional assumption that \(\Theta\) is a known bounded set, we extend this phenomenon to general finitely discrete \(g_0\), and in the univariate case obtain a rate matching what is known to be achievable by overparameterized finite mixture models \citep{ho2016strong}.

Finally, beyond estimation, we also study a canonical inferential procedure: the likelihood ratio test. In its simplest form, the likelihood ratio statistic is\footnote{See Remark~\ref{rem:LRTfinite} for additional details.}
\[
L_n(\mathcal G,g_0)\coloneqq 2\{\ell_n(f_{\hat g})-\ell_n(f_{g_0})\}.
\]
In regular parametric models, \(L_n(\mathcal G,g_0)\) converges to a chi-square distribution, with degrees of freedom reflecting the model dimension; in this sense, it provides an empirical measure of model complexity. For mixture models, however, such Wilks-type behavior can fail. Since \citet{hartigan1985failure}, it has been known that even in the Gaussian location mixture with \(g_0\) a point mass, \(L_n(\mathcal G,g_0)\) diverges when \(\Theta\) is unbounded, precluding standard likelihood-ratio calibration. More recently, \citet{azais2009likelihood} showed that when \(\Theta\) is bounded, \(L_n(\mathcal G,g_0)\) converges to a tight limit for Gaussian, Poisson, and binomial mixtures, although their analysis still assumes that \(g_0\) is degenerate at a single point. Despite this restriction, their theorem is notable in that it identifies a setting where a genuinely nonparametric model exhibits parametric-like asymptotic behavior. Our approach is inspired by this line of work and differs from the nonasymptotic techniques that dominate much of the recent NPMLE literature. Building on these ideas, we extend the tightness result beyond the degenerate case to general finitely discrete \(g_0\), and uncover an adaptivity phenomenon for likelihood-ratio inference: \(L_n(\mathcal G,g_0)\) is asymptotically tight if and only if \(g_0\) is finitely discrete.

\section{Adaptivity phenomenon in Gaussian/Poisson mixture models}

Before presenting our main results, we introduce some additional notation. Throughout, we will use $\bN$ to denote the set of nonnegative integers including zero. We will also write $[d]$ for $\{1,\dots,d\}$. For the class of mixing distributions $\cG$ as defined in the introduction, we will often write $\cF \coloneqq \{f_g: g \in \cG\}$ for the class of the resulting marginal distributions. Before Section~\ref{sec:generality}, we focus on Gaussian and Poisson mixture models. To treat these models in a unified way, we impose the following assumption.

\begin{enumerate}
\renewcommand{\theenumi}{(GP)}
\renewcommand{\labelenumi}{\theenumi}
\item \label{(GP)} Fix \(b\in\{0,1,\dots,d\}\). The component densities admit the product form
\[
p_\theta(x)=\prod_{l=1}^d p_{\theta_l}(x_l),
\]
where, for \(1\le l\le b\),
\[
p_{\theta_l}(x_l)=\phi(x_l-\theta_l),
\qquad 
\phi(t)\coloneqq \frac{1}{\sqrt{2\pi}}\exp\!\Big(-\frac{t^2}{2}\Big),
\]
and, for \(b+1\le l\le d\),
\[
p_{\theta_l}(x_l)=\frac{\theta_l^{x_l}}{x_l!}\exp(-\theta_l).
\]
The parameter space \(\Theta\subseteq \bR^b\times(0,\infty)^{d-b}\) is bounded. The base measure \(\mu\) is the corresponding product of Lebesgue measure (on \(\bR^b\)) and counting measure (on \(\bN^{d-b}\)).
\end{enumerate}

The setup above includes multivariate Gaussian and Poisson mixture models as special cases. For these models, it is known that the \textit{unconstrained NPMLE}---corresponding to $\Theta=\bR^b \times (0,\infty)^{d-b}$---achieves a nearly parametric rate for marginal density estimation when the support of $g_0$ is bounded; see Theorem 2.1 of \citet{saha2020nonparametric} for the Gaussian case and Theorem 5 of \citet{jana2025optimal} for the Poisson case. 

In contrast, when $\Theta$ is bounded, we show that this likelihood-based principle exhibits a sharp adaptivity phenomenon. A salient departure from previous work is that our results are formulated in terms of the chi-square divergence, 
$$
\chi^2(f,f_0)\coloneqq\Big\|\frac{f}{f_0}-1\Big\|_{L_2(f_0\dif\mu)}^2. 
$$

Exact computation of the NPMLE is typically infeasible due to the infinite-dimensional nature of the model space. As a result, especially in the Gaussian setting, results are often stated for a broader class of estimators characterized by an approximate likelihood optimality condition \citep{jiang2009general,saha2020nonparametric,soloff2025multivariate}. Specifically, define
$$
\cG_n(c)=\{g\in\cG:\ell_n(f_g)\ge\ell_n(f_{g_0})+c\}\cup\{\hat{g}\},\footnote{The union with $\{\hat{g}\}$ guarantees that $\cG_n(c)\neq\emptyset$, even for $c>0$.}
$$
where $\hat{g}$ denotes the NPMLE introduced earlier, and let $c=c_n$ be a sequence of negative numbers that decreases slowly with $n$. Any estimator $\tilde{g}\in\cG_n(c_n)$ inherits strong statistical performance, with the NPMLE as a particular instance. 

\begin{theorem}\label{thm:MAIN}
Suppose that Assumption \ref{(GP)} holds.
\begin{enumerate}
\item If \(g_0\) is discrete with finitely many support points, then for any \(c<0\),
\[
\sup_{g\in\cG_n(c)} n\,\chi^2(f_g,f_{g_0})=O_\bP(1).
\]
\item If \(g_0\) is not finitely discrete\footnote{That is, \(g_0\) is not a discrete distribution supported on finitely many points.}, then for any \(c>0\),
\[
\sup_{g\in\cG_n(c)} n\,\chi^2(f_g,f_{g_0})\to\infty
\qquad\text{in probability.}
\]
\end{enumerate}
\end{theorem}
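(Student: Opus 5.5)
The plan is to analyse the normalized score class
\[
\cS \coloneqq \Big\{ s_g \coloneqq \frac{f_g/f_{g_0}-1}{\|f_g/f_{g_0}-1\|_{L_2(f_{g_0}\dif\mu)}} : g\in\cG,\ f_g\neq f_{g_0}\Big\},
\]
in the spirit of \citet{azais2009likelihood}.

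\textbf{Part (1).} Write $h_g = f_g/f_{g_0}-1$ and $\eta_g=\|h_g\|_{L_2(f_{g_0})} = \chi(f_g,f_{g_0})$. The elementary inequality $\log(1+u)\le u-u^2/2+u^3/3$ is not directly usable, because $u$ may be large. Instead I use the truncated bound $\log(1+u)\le u - \tfrac12 u^2 \ind\{u\le \delta\}\cdot(1-o(1))$ on $u\ge -1$, or equivalently the argument $2\log(1+u/2)$ via concavity and $f_{(g+g_0)/2}$. Any $g\in\cG_n(c)$ then satisfies
\[
c \le \sum_i \log(1+\eta_g s_g(X_i)) \le \eta_g \sum_i s_g(X_i) - \tfrac{\eta_g^2}{2}\sum_i s_g(X_i)^2\ind\{\eta_g |s_g(X_i)|\le\delta\}(1-o(1)).
\]
Three ingredients give $\eta_g^2 = O_\bP(1/n)$.

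\emph{(a) Consistency.} $\sup_{\cG_n(c)}\eta_g\to0$ in probability. I get this from standard Hellinger consistency of the NPMLE over the compact class $\cG$ (weak compactness plus identifiability), upgraded to $\chi^2$ using the uniform bound $\sup_{g}\|f_g/f_{g_0}\|_\infty$-type control on bounded $\Theta$. In the Gaussian case the ratio $p_\theta/p_{\theta'}$ grows like $e^{C|x|}$, which is square integrable under $f_{g_0}$. In the Poisson case it grows like $(\theta/\theta')^x$, which is summable against the Poisson tail.

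\emph{(b) Donsker property.} $\cS$ is $f_{g_0}$-Donsker with a square-integrable envelope. This is the heart of the matter, and here finite discreteness of $g_0=\sum_{k=1}^K\pi_k\delta_{\theta_k}$ enters. Limits of $s_g$ as $\eta_g\to0$ are characterized by splitting the mass of $g$ into pieces near each $\theta_k$ and a piece far away. Taylor-expanding $p_\theta$ around $\theta_k$ to second order shows that $h_g$ lies, up to $o(\eta_g)$ uniformly, in the span of finitely many functions: $p_{\theta_k}/f_{g_0}$, the derivatives $\partial_j p_{\theta_k}/f_{g_0}$, second derivatives, and $p_\theta/f_{g_0}-1$ for $\theta$ far from the support. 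I normalize these via a local "generalized derivative" parametrization as in Azaïs et al., and verify a bracketing entropy bound $\log N_{[\,]}(\epsilon,\cS,L_2)\lesssim \log(1/\epsilon)$ using analyticity of the Gaussian and Poisson kernels. I expect this step to be the \emph{main obstacle}: the Taylor remainder must be controlled uniformly relative to $\eta_g$, which requires a strong-identifiability lower bound of the form $\eta_g\gtrsim$ (generalized moment discrepancies), in the style of \citet{ho2016strong}. This holds for location-Gaussian and Poisson kernels because the relevant derivatives are linearly independent.

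\emph{(c) Conclusion of part (1).} Given (b), $\sup_{s\in\cS}|n^{-1/2}\sum s(X_i)|=O_\bP(1)$ and $\inf_{s\in\cS} n^{-1}\sum s^2(X_i)\ind\{\cdot\}\ge 1/2$ w.p.\ $\to1$ (Glivenko–Cantelli plus the envelope, combined with (a) so that the truncation is asymptotically harmless). Plugging into the display gives $c\le \sqrt n\eta_g O_\bP(1) - n\eta_g^2/4$, hence $n\eta_g^2=O_\bP(1)$.

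\textbf{Part (2).} It suffices to take $g=\hat g$, since $\hat g\in\cG_n(c)$ always. If $n\chi^2(f_{\hat g},f_{g_0})$ stayed bounded along a subsequence with positive probability, the expansion in the display, now used in reverse with $\log(1+u)\ge u-u^2$ near zero, would bound $\ell_n(f_{\hat g})-\ell_n(f_{g_0})$ by $O_\bP(1)$. But for $g_0$ not finitely discrete, the likelihood ratio diverges. For any $M$, I choose $m$ directions $h_1,\dots,h_m$ of the form $(p_{\theta}-p_{\theta'})/f_{g_0}$, with $\theta,\theta'$ in the infinite support of $g_0$ (mass transfer within the support keeps $g\in\cG$ at scale $n^{-1/2}$). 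These can be taken linearly independent for every $m$, precisely because the support is infinite. The local likelihood over this $m$-dimensional submodel gives $L_n\ge \chi^2_m$-type limits, so $\ell_n(f_{\hat g})-\ell_n(f_{g_0})\to\infty$ in probability. Since
\[
\ell_n(f_{\hat g})-\ell_n(f_{g_0})\le \sum_i h_{\hat g}(X_i)\le \sqrt{n}\,\eta_{\hat g}\,\sup_{s}\Big|n^{-1/2}\sum_i s(X_i)\Big|,
\]
with the supremum over the bounded-entropy class of normalized scores at the \emph{scale} $\eta\lesssim M/\sqrt n$, a bounded $n\eta_{\hat g}^2$ would contradict the divergence. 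This last step needs a localized maximal inequality rather than a global Donsker property, which fails here. I would use the fact that for bounded $\Theta$ the full class $\{h_g\}$ has polynomial-in-$\log$ entropy, so the local modulus $\sup_{\eta_g\le r}|\bG_n h_g|$ is $O_\bP(r\,\mathrm{polylog}(1/r))$. Then $n\eta_g^2\le M$ forces $\sum_i h_g(X_i)=O_\bP(\sqrt M\,\mathrm{polylog}\,n)$, which must be compared carefully with the divergence rate. If this comparison fails, I would instead argue directly that near-maximizers with bounded $n\chi^2$ lie in an $O_\bP(1)$-likelihood neighbourhood, via the quadratic expansion restricted to that neighbourhood.
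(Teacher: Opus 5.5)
\textbf{Part (1).} Your architecture matches the paper's Lemma~\ref{lem:CVR}: a quadratic inequality for the log-likelihood ratio plus a Donsker score class with square-integrable envelope. But step (b), on which everything rests, is asserted rather than proved, and the justification you give does not cover this model class. Linear independence of $p_{\theta_k},\partial p_{\theta_k},\partial^2p_{\theta_k}$ is the strong-identifiability argument for finite mixtures with a bounded number of atoms. Here $g$ is an arbitrary mixing measure, with mass at every distance from $\mathrm{supp}(g_0)$. With a fixed radius $\rho$, the remainder of the local pieces is at best $O(\rho\,\eta_g)$, not $o(\eta_g)$. With $\rho\to0$, mass just outside the balls can nearly cancel the first-order local terms, so the coefficients of the ``far'' functions are no longer bounded relative to $\eta_g$.

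What is missing is a device that controls the whole tail of the expansion of an arbitrary $g$ by finitely many quantities; this is exactly where finite discreteness of $g_0$ must enter. The paper proceeds in three steps:
\begin{enumerate}
\item It expands around a support point $\theta_0$ and restores orthogonality in $L_2(f_{g_0}\dif\mu)$ by pulling out the bounded factor $\sqrt{p_{\theta_0}/f_{g_0}}$.
\item Testing against the $q_\alpha$ gives $\chi(f_g,f_{g_0})\ge c\,\Delta_g$ (Lemma~\ref{lem:DEN-multi}).
\item Integrating against the nonnegative polynomial $\prod_{j=1}^J(\theta-\theta_j)^2$, which vanishes on $\mathrm{supp}(g_0)$, yields $|m_{k,g}-m_{k,g_0}|\le k(M+1)^{2Jk}\Delta_g$ for all $k>2J$ (Lemma~\ref{lem:MCL}).
\end{enumerate}
Together these put $\cS$ inside a fixed ellipsoid-type class, which is Donsker.

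The same polynomial would also repair your local route, without any radius. Since $\prod_j\|\theta-\theta_j\|^2\ge c\min_k\|\theta-\theta_k\|^2$ on $\Theta$, it bounds $\int\min_k\|\theta-\theta_k\|^2\,\dif g(\theta)$ by $C\Delta_g$. Polynomials of degree below $2J$ that interpolate values and gradients at the atoms then bound the weight errors and first moments of the Voronoi pieces of $g$ by $C\Delta_g$. Finally, write $p_\theta$ as its first-order Taylor polynomial at the nearest atom plus $\|\theta-\theta_k\|^2$ times a remainder with square-integrable envelope. This places $\cS$ in a bounded multiple of the symmetric convex hull of a Donsker class.

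Two smaller points. Step (a) is unnecessary: $\log(1+u)\le u-u_-^2/2$ holds for all $u>-1$, and $\inf_{s\in\cS}\int s_-^2f_{g_0}\dif\mu>0$ follows from $\int sf_{g_0}\dif\mu=0$, $\int s^2f_{g_0}\dif\mu=1$ and the envelope. Also, the target $\log N_{[\,]}(\epsilon,\cS,L_2)\le C\log(1/\epsilon)$ is too strong in general. For $g_0=\delta_{\theta_0}$ and $\Theta$ containing an interval, $\cS$ contains normalized convex hulls of the analytic curve $\theta\mapsto p_\theta/f_{g_0}-1$, whose entropy grows roughly like $\log^2(1/\epsilon)/\log\log(1/\epsilon)$. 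A finite entropy integral is all you need.

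\textbf{Part (2).} Your proof that $L_n(\cG,g_0)\to\infty$ via growing submodels is essentially the paper's construction. The route fails at the reduction to $g=\hat g$. That reduction commits you to proving $n\chi^2(f_{\hat g},f_{g_0})\to\infty$ for the NPMLE itself. This is stronger than the theorem, and the paper explicitly refrains from claiming it (see the remark on nonparametric behavior following Theorem~\ref{thm:MAIN}).

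Your contrapositive needs ``$n\chi^2(f_{\hat g},f_{g_0})\le M$ forces $\ell_n(f_{\hat g})-\ell_n(f_{g_0})=O_\bP(1)$'', and no argument using only the size of $\chi^2$ can give this. Work in an $m$-dimensional submodel of the kind you build, with whitened local coordinates $u$ satisfying $\|u\|^2=n\chi^2$. There the log-likelihood ratio equals $u^\top Z_{m,n}-\|u\|^2/2+o_\bP(1)$ uniformly over $\|u\|\le\sqrt{M}$, where $Z_{m,n}\Dkonv N(0,I_m)$. Taking $u=\sqrt{M}Z_{m,n}/\|Z_{m,n}\|$ shows that $\sup\{\ell_n(f_g)-\ell_n(f_{g_0}):\ g\in\cG,\ n\chi^2(f_g,f_{g_0})\le M\}$ is asymptotically at least $\sqrt{M}\|Z_m\|-M/2$, which diverges as $m\to\infty$. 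So your fallback, read uniformly over $\{n\chi^2\le M\}$, is false. The polylog comparison fails too: an $O_\bP(\sqrt{M}\,\mathrm{polylog}\,n)$ bound contradicts no known divergence rate of $L_n(\cG,g_0)$, and the paper lists that rate as open.

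The missing idea is that the positive tolerance $c>0$ and the supremum over $\cG_n(c)$ let you bypass $\hat g$ altogether. Take the approximate MLE $\hat g_K$ over a $K$-dimensional star-shaped submodel such as $\dif g=(1+\sum_{k\le K}c_kq_k)\dif g_0$, with $q_k$ orthonormal polynomials in $L_2(g_0)$. These are bounded on $\Theta$, so small two-sided perturbations are admissible even when $g_0$ has no atoms; your atom-to-atom mass transfers are not. The score set is the normalized span of $K$ functions that are linearly independent by identifiability, hence Donsker. The first equality in Theorem~\ref{thm:ASY} then supplies exactly the link between $\chi^2$ and the likelihood that your argument lacks: $n\chi^2(f_{\hat g_K},f_{g_0})=2\{\ell_n(f_{\hat g_K})-\ell_n(f_{g_0})\}+o_\bP(1)\Dkonv\chi^2(K)$. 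Since $\hat g_K\in\cG_n(c)$ whenever $\ell_n(f_{\hat g_K})-\ell_n(f_{g_0})\ge c$,
\[
\liminf_{n\to\infty}\bP\Big(\sup_{g\in\cG_n(c)}n\chi^2(f_g,f_{g_0})>M\Big)\ge\bP\big(\chi^2(K)>M\big)-\bP\big(\chi^2(K)\le 2c\big),
\]
which tends to $1$ as $K\to\infty$.
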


It is remarkable that the NPMLE can achieve exact parametric rates without any explicit regularization. This, together with other adaptivity results established in \cite{jiang2009general, saha2020nonparametric, soloff2025multivariate,shen2022empirical,jana2025optimal}, could be seen as theoretical explanation for the empirical observation that NPMLE-based methods work well in practice.

\begin{remark}[Adaptation of the NPMLE to finitely discrete \(g_0\)]\label{rem:adaptivity}
The first part of the theorem shows that, when \(g_0\) is finitely discrete and \(\Theta\) is bounded, the NPMLE attains an exact parametric rate for marginal density estimation. For univariate Gaussian and Poisson mixtures with $g_0$ corresponding to a single point mass, a result of this kind for the exact NPMLE is implicit in the proofs of~\cite{azais2009likelihood}. To the best of our knowledge, Theorem~\ref{thm:MAIN} is the first result that establishes exact parametric rate for density estimation based on the approximate NPMLE in a general setting. A different adaptivity result for multivariate Gaussian mixtures in the unconstrained setting (i.e., \(\Theta=\bR^d\)) is due to \citet{saha2020nonparametric}. The latter result is not directly comparable to ours. For clarity, we highlight several key differences:
\begin{itemize}
    \item \citet{saha2020nonparametric} formulate the rate in terms of the Hellinger distance,
    \[
    H^2(f,f_0)\coloneqq \Big\|\sqrt{\frac{f}{f_0}}-1\Big\|_{L_2(f_0\,\dif\mu)}^2.
    \]
    While the chi-square divergence dominates the Hellinger distance in general, it was recently shown that, for Gaussian location mixtures with bounded mixing support, the two divergences are in fact equivalent; see Theorem 21 in \citet{jia2023entropic}. We are not aware of an analogous equivalence result for Poisson mixtures.
    
    \item When \(g_0\) is finitely discrete, the bounds in \citet{saha2020nonparametric} do not yield an exact parametric rate. In contrast, our bounded-\(\Theta\) regime identifies a setting in which an exact parametric rate is attained. 
    
    \item On the technical side, \citet{saha2020nonparametric} provide nonasymptotic bounds that support a minimax analysis, whereas our result is purely asymptotic. In this sense, our theorem can be viewed as a benchmark for future nonasymptotic refinements.
\end{itemize}
\end{remark}

\begin{remark}[Literature in the bounded-\(\Theta\) regime]
The literature on the NPMLE for Gaussian and Poisson mixtures with a bounded parameter space \(\Theta\) is relatively sparse. For univariate Gaussian location mixtures, \citet{ma2025best} (Theorem 8) establishes an improved convergence rate for the NPMLE---relative to the unconstrained NPMLE---for a general mixing distribution \(g_0\) supported on \(\Theta\). For the univariate Poisson model, related arguments appear only implicitly in the proofs of \citet{jana2025optimal}. We conjecture that one reason this regime has attracted less attention is that its role in understanding adaptivity of the NPMLE, especially with respect to finitely discrete mixing distributions, has not been fully recognized. Nevertheless, bounded-support assumptions feature prominently in parts of the method-of-moments literature, where they enable parametric rates for marginal density estimation in finite mixture models; see, for example, \citet{wu2020optimal,doss2023optimal}.
\end{remark}

\begin{remark}[Nonparametric behavior when \(g_0\) is not finitely discrete]
The second part of the theorem shows that, when \(g_0\) is not finitely discrete, there exist estimators whose likelihood is large relative to that of the truth---in the sense that it exceeds \(\ell_n(f_{g_0})\) by a fixed positive amount---whose induced marginals do not converge at a parametric rate. Since only an approximate NPMLE can be computed in practice, this result indicates that one cannot expect computationally feasible estimators to achieve parametric rates for marginal density estimation in cases where the mixing distribution is not finitely discrete, even at fixed distributions in the parameter space. However, this statement does not preclude the possibility that the NPMLE itself may still achieve a parametric rate. Beyond its conceptual interest, the result has a practical implication for theory: for general \(g_0\), one cannot prove a parametric rate for the NPMLE using only the fact that it attains a likelihood larger than \(\ell_n(f_{g_0})\). In other words, a ``large-likelihood'' guarantee by itself is insufficient to certify parametric convergence in this case. 
\end{remark}

\begin{remark}[Other mixture models] 
The behavior of the NPMLE depends on the underlying model. For instance, mixtures of uniform distributions can be equivalently characterized through monotone density estimation; the corresponding estimator is the well-known Grenander estimator. There exists an extensive literature on the properties of this estimator, see \cite{groeneboom2015nonparametric} for an overview of existing results. The special structure of the Grenander estimator leads to completely different phenomena and proofs, but some of the results share a similar flavor to what we establish here. For instance, it is known that the marginal density is estimated at at $n^{1/3}$ pointwise rate when the mixing distribution has a density that is bounded away from zero, but $n^{1/2}$ rates are achieved at certain points when the mixing distribution has regions of zero density \citep{carolan1999asymptotic}.
\end{remark}

In the following subsection, we explain the intuition and implications behind the first part of Theorem~\ref{thm:MAIN}. The second part turns out to be more general; a broader, model-agnostic discussion is deferred to Section~\ref{sec:generality}.

\subsection{A general lemma for the parametric rate}\label{sec:parametric-lemma}

Despite recent advances in the theory of mixture models, the first ingredient underlying the parametric rate is in fact a general lemma, closely related versions of which appeared previously in the likelihood ratio testing literature; see, for example, \citet{gassiat2002likelihood,azais2009likelihood}. In this subsection, we denote by \(\cF\) a general class of densities with respect to a common reference measure \(\mu\), and assume an i.i.d. sample \(X_1,\dots,X_n\sim f_0\) for some \(f_0\in\cF\). The framework in Assumption~\ref{(GP)} can be viewed as a special case of this abstract setting.

On the pair \((\cF,f_0)\), we impose the following assumptions:
\begin{enumerate}
\renewcommand{\theenumi}{(A1)}
\renewcommand{\labelenumi}{\theenumi}
\item \label{(A1)} For every \(f\in \cF\backslash f_0\),
\[
\chi^2(f,f_0)\in(0,\infty).
\]

\renewcommand{\theenumi}{(A2)}
\renewcommand{\labelenumi}{\theenumi}
\item \label{(A2)} The \textit{score set}
\begin{equation}\label{eq:SCO}
\cS\coloneqq\bigl\{s_f:\ f\in \cF\backslash f_0\bigr\},
\qquad 
s_f\coloneqq \frac{f/f_0-1}{\chi(f,f_0)},
\end{equation}
is \(f_0\,\dif\mu\)-Donsker and admits an \(f_0\,\dif\mu\)-square-integrable envelope.
\end{enumerate}

Assumption~\ref{(A1)} serves mainly to ensure that the score set in \ref{(A2)} is well defined. Conditions of the form \ref{(A2)} have appeared repeatedly in the likelihood ratio testing literature; see, for instance, Theorem 3.1 of \citet{gassiat2002likelihood} and Theorem 3.1 of \citet{liu2003asymptotics}. Under these assumptions, we obtain the following lemma.

\begin{lemma}\label{lem:CVR}
Under Assumptions~\ref{(A1)} and \ref{(A2)},
\begin{equation}\label{eq:CVR}
\sup_{f\in\cF_n} n\,\chi^2(f,f_0)=O_{\bP}(1),
\end{equation}
where
\[
\cF_n \coloneqq \Big\{f\in\cF:\ \ell_n(f)\ge \ell_n(f_0)-c_n\Big\}
\]
for an arbitrary nonnegative sequence \(c_n=O_\bP(1)\).
\end{lemma}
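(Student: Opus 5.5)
We follow the classical argument from the likelihood ratio testing literature. We write every $f\in\cF\backslash f_0$ as $f/f_0 = 1+\chi(f,f_0)\,s_f$, with $s_f\in\cS$. The aim is an upper bound on $\ell_n(f)-\ell_n(f_0)$ that is linear minus quadratic in $t\coloneqq\chi(f,f_0)$, with coefficients controlled uniformly over $\cS$.

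\emph{Step 1 (elementary inequality).} We use $\log(1+u)\le u - \tfrac{1}{2}u^2 + \tfrac{1}{3}u^3$ only on the positive part; more conveniently, $\log(1+u)\le u-\tfrac12 u_-^2$ for $u>-1$, where $u_-=\min(u,0)$. With $u_i = t\,s_f(X_i)$ this gives
\[
\ell_n(f)-\ell_n(f_0)\le t\sum_{i=1}^n s_f(X_i)-\frac{t^2}{2}\sum_{i=1}^n (s_f(X_i))_-^2 .
\]
The negative part is used so that $t\,s_f(X_i)\ge -1$ is automatic and no upper bound on $t\,s_f$ is needed. Since $f\ge 0$, we have $u_i\ge -1$, so only the lower tail of $s_f$ enters.

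\emph{Step 2 (uniform control of the two terms).} By the Donsker property in \ref{(A2)}, and since $\int s_f f_0\,\dif\mu=0$,
\[
\sup_{s\in\cS}\Big|n^{-1/2}\sum_{i} s(X_i)\Big|=O_\bP(1).
\]
For the quadratic term we need
\[
\inf_{s\in\cS} n^{-1}\sum_i (s(X_i))_-^2\ \ge\ \kappa>0
\]
with probability tending to one. The function class $\{s_-^2: s\in\cS\}$ is Glivenko--Cantelli: it is a Lipschitz transform of a Donsker class with a square-integrable envelope $F$, and $s_-^2\le F^2\in L_1$. It therefore suffices to show $\inf_{s\in\cS}\int s_-^2 f_0\,\dif\mu>0$.

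This is the main obstacle. Each $s$ has mean $0$ and variance $1$, so its negative part is nontrivial. To get a \emph{uniform} lower bound I would argue by contradiction. If $\int s_{k,-}^2 f_0\to0$, then $\int s_{k,-}f_0\to0$, and the zero mean forces $\int s_{k,+}f_0\to0$. Hence $s_k\to0$ in $L_1(f_0)$. On the other hand, $\int s_k^2 f_0=1$ with $s_k^2\le F^2$, and the squares of a Donsker class with square-integrable envelope are uniformly integrable. Together these give $\int s_k^2 f_0\to0$, a contradiction. Explicitly, uniform integrability of $\{s^2\}$ holds because $s^2\le F^2$ with $F^2$ integrable, so $L_1$-convergence plus domination yields $L_2$-convergence along subsequences by Vitali's theorem. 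The contradiction shows $\kappa\coloneqq\tfrac12\inf_{s}\int s_-^2f_0>0$.

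\emph{Step 3 (conclusion).} Take $f\in\cF_n$ with $f\neq f_0$, and write $Z_n=\sup_s|n^{-1/2}\sum_i s(X_i)|$. On the event from Step 2,
\[
-c_n\le \ell_n(f)-\ell_n(f_0)\le \sqrt n\,t\,Z_n-\kappa n t^2 .
\]
Put $x=\sqrt n\,t$. Then $\kappa x^2 - Z_n x - c_n\le 0$, so
\[
x\le \frac{Z_n+\sqrt{Z_n^2+4\kappa c_n}}{2\kappa}.
\]
Since $Z_n=O_\bP(1)$ and $c_n=O_\bP(1)$, this bound is $O_\bP(1)$ uniformly in $f\in\cF_n$. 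Therefore $\sup_{f\in\cF_n} n\chi^2(f,f_0)=O_\bP(1)$, which is \eqref{eq:CVR}. The case $f=f_0$ contributes zero and needs no separate treatment.
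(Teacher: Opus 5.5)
Your proposal is correct and follows essentially the same argument as the paper. Both use the bound $\log(1+u)\le u-\tfrac12 u_-^2$, Donsker control of the linear term, the Glivenko--Cantelli property of $\{s_-^2: s\in\cS\}$ together with the same $L_1$-to-$L_2$ contradiction argument giving $\inf_{s\in\cS}\int s_-^2 f_0\,\dif\mu>0$, and finally solve the quadratic inequality in $\sqrt{n}\,\chi(f,f_0)$. (One small slip: with your definition of $\kappa$, the quadratic coefficient in Step 3 should be $\kappa/2$ rather than $\kappa$, which only changes constants.)
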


Lemma~\ref{lem:CVR} can be viewed as a strengthening of Inequality 1.1 in \citet{gassiat2002likelihood}, which yields the same conclusion but only in the special case \(c_n\equiv 0\). While allowing a nonzero tolerance may seem minor, it is significant from a practical perspective in nonparametric mixture models. As discussed before Theorem~\ref{thm:MAIN}, the exact NPMLE is typically computationally infeasible, so any guarantees for the exact maximizer of the likelihood do not translate into guarantees on the performance of estimators that are used in practice. It is thus natural to seek guarantees for classes of tractable, approximately optimal estimators. 

In many implementations, the likelihood gap between the NPMLE and a candidate estimator can be evaluated; see, for example, Section 6.4 of \citet{lindsay1995mixture}. This motivates the class
\[
\tilde{\cF}_n \coloneqq \Big\{f\in\cF:\ \sup_{\tilde f\in\cF}\ell_n(\tilde f)-\ell_n(f)\le c_n\Big\}.
\]
Since \(\tilde{\cF}_n\subseteq\cF_n\), the bound \eqref{eq:CVR} immediately carries over to \(\tilde{\cF}_n\). By contrast, if one restricts to \(c_n\equiv 0\), then the parametric rate is guaranteed only for exact maximizers of \(\ell_n(f)\), a requirement that is typically computationally intractable.

Despite its concise statement, the assumptions of Lemma~\ref{lem:CVR} are notoriously difficult to verify for mixture models, largely because of Assumption~\ref{(A2)}. In the classical NPMLE literature \citep{ghosal2001entropies,zhang2009generalized,saha2020nonparametric}, the main technical step typically reduces to controlling the complexity (e.g., entropy numbers) of the marginal density class \(\cF\), or of classes of density derivatives in empirical Bayes analyses. In contrast, Assumption~\ref{(A2)} requires complexity control of the score set \(\cS\), which introduces additional layers of difficulty. First, the scores involve the likelihood ratio \(f/f_0\), so one generally cannot expect an entropy bound for \(\cS\) that is uniform in \(f_0\). In fact, results that we establish in later sections imply that for Gaussian and Poisson mixtures the score set is Donsker if and only if $f_0$ corresponds to a finitely discrete mixing distribution $g_0$. This shows that any universal entropy bound will lead to sub-optimal bounds, and the specific structure of $g_0$ needs to be taken into account to obtain sharper results. Second, the ratio \(f/f_0\) is normalized to have mean zero and variance one under \(f_0\,\dif\mu\). As a consequence, even when \(f_1\) and \(f_2\) are both close to \(f_0\), their normalized scores \(s_{f_1}\) and \(s_{f_2}\) can differ substantially. In effect, this normalization amplifies small neighborhoods of \(f_0\), making local control significantly more delicate.

\subsection{A key lemma in moment theory}

Throughout this subsection, for ease of exposition, we focus on univariate Gaussian location mixtures; that is, we assume \ref{(GP)} with \(b=d=1\). Even in this simplest setting, the assumptions of Lemma~\ref{lem:CVR} have previously been verified only in the degenerate case where \(g_0\) is a single point mass \citep{azais2009likelihood}. The main technical contribution of our work is twofold: we verify these assumptions for general finitely discrete \(g_0\), and we show that they fail when \(g_0\) is not finitely discrete. This subsection highlights the mechanism behind the positive result.

In Gaussian mixture problems, the analysis often reduces to exploiting the analytic structure of the Gaussian kernel \(\phi\). The likelihood ratio testing literature \citep{azais2009likelihood,jiang2019rate} leverages perhaps its most powerful feature: fixing \(\theta_0\in\Theta\), the likelihood ratio admits the Hermite expansion
\[
\frac{\phi(x-\theta)}{\phi(x-\theta_0)}-1
=\sum_{k=1}^\infty \frac{(\theta-\theta_0)^k}{k!}\,H_k(x-\theta_0),
\]
where \(\{H_k\}_{k\in\bN}\) are the (probabilists') Hermite polynomials, which satisfy the orthogonality relation
\[
\int H_k(x)H_{k'}(x)\,\dif\phi(x)=k!\,\ind_{\{k=k'\}},
\qquad k,k'\in\bN.
\]
Consequently, for any mixing distribution \(g\in\cG\), the marginal density \(f_g\) admits the representation
\[
f_g(x)
=\Big(1+\sum_{k=1}^\infty \frac{m_{k,g}}{k!}\,H_k(x-\theta_0)\Big)\phi(x-\theta_0),
\qquad 
m_{k,g}\coloneqq\int (\theta-\theta_0)^k\,\dif g(\theta).
\]
It follows that the score set can be written as
\[
\cS=\Bigg\{\sum_{k=1}^\infty 
\frac{m_{k,g}-m_{k,g_0}}{k!\,\chi(f_g,f_{g_0})}\,
H_k(x-\theta_0)\,
\frac{\phi(x-\theta_0)}{f_{g_0}(x)}
\,:\, g\in\cG\backslash g_0\Bigg\}. 
\]
In the special case \(g_0=\delta_{\theta_0}\), the ratio \(\phi(x-\theta_0)/f_{g_0}(x)\) equals \(1\), and the summands are orthogonal with respect to the data-generating distribution \(f_{g_0}(x)=\phi(x-\theta_0)\). As a result, controlling the size of \(\cS\) reduces to controlling the coefficient sequence, which is tractable via the explicit identity
\[
\frac{m_{k,g}-m_{k,g_0}}{k!\,\chi(f_g,f_{g_0})}
=\frac{m_{k,g}}{k!\,\sqrt{\sum_{j=1}^\infty m_{j,g}^2/j!}}.
\]

For a general finitely discrete \(g_0\), these convenient properties no longer come for free. Fortunately, one can recover an \(f_{g_0}\)-orthogonal decomposition by pulling a factor \(\sqrt{\phi(x-\theta_0)/f_{g_0}(x)}\) outside the infinite series: the remaining summands are then orthogonal with respect to \(f_{g_0}\). This manipulation is valid provided that \(\sqrt{\phi(x-\theta_0)/f_{g_0}(x)}\) is uniformly bounded, which can be ensured by choosing \(\theta_0\) to be a support point of \(g_0\).

The remaining difficulty is that the coefficients no longer admit a simple closed form. To address this, we establish the following moment comparison inequality. A related result---controlling higher-order moment differences by lower-order ones---appears as Lemma 10 in \citet{wu2020optimal}, where both \(g\) and \(g_0\) are assumed to be finitely discrete.

\begin{lemma}\label{lem:MCL}
Assume that \(\Theta\subseteq\bR\) is bounded. Let \(g_0\) be a finitely discrete distribution on \(\Theta\) with \(J\) support points. Fix an arbitrary \(g\) on \(\Theta\). Then, for any \(k>2J\),
\begin{equation}\label{eq:MCL}
|m_{k,g}-m_{k,g_0}|
\le
k\big(\sup_{\theta\in\Theta}|\theta-\theta_0|+1\big)^{2Jk}
\max_{j\in [2J]}|m_{j,g}-m_{j,g_0}|.
\end{equation}
\end{lemma}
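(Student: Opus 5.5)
Write the support of \(g_0\) as \(\theta_1,\dots,\theta_J\), and recall that \(\theta_0\) is chosen to be one of these support points (any fixed reference point in \(\Theta\) works the same way). Define the annihilating polynomial
\[
P(t)\coloneqq\prod_{i=1}^J\big(t-(\theta_i-\theta_0)\big)^2=\sum_{j=0}^{2J}a_j t^j ,
\]
which is monic of degree \(2J\) and nonnegative on \(\bR\). The plan is to use \(P\) to express high-order moment differences through low-order ones. Set \(R\coloneqq\sup_{\theta\in\Theta}|\theta-\theta_0|\) and \(\Delta_j\coloneqq m_{j,g}-m_{j,g_0}\), and let \(\varepsilon\coloneqq\max_{j\in[2J]}|\Delta_j|\).

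The key identity is as follows. For any \(k\ge 2J\), the polynomial \(t^{k-2J}P(t)\) vanishes at every \(\theta_i-\theta_0\), so \(\int(\theta-\theta_0)^{k-2J}P(\theta-\theta_0)\,\dif g_0=0\). Expanding gives the linear recursion
\[
m_{k,g_0}=-\sum_{j=0}^{2J-1}a_j\,m_{k-2J+j,g_0}.
\]
For \(g\) we instead get \(m_{k,g}=-\sum_{j<2J}a_j m_{k-2J+j,g}+r_k\), where \(r_k\coloneqq\int(\theta-\theta_0)^{k-2J}P(\theta-\theta_0)\,\dif g\). Subtracting, \(\Delta_k=-\sum_{j<2J}a_j\Delta_{k-2J+j}+r_k\). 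The term \(r_k\) is the crux. Because \(P\ge0\), we have \(|r_k|\le R^{k-2J}\int P(\theta-\theta_0)\,\dif g\). Moreover \(\int P\,\dif g_0=0\), so
\[
\int P\,\dif g=\sum_{j=1}^{2J}a_j\Delta_j\le \|a\|_1\varepsilon
\]
(the \(j=0\) term cancels since both measures have mass one). Hence \(|r_k|\le R^{k-2J}\|a\|_1\varepsilon\). This is exactly where the square in \(P\) matters: it turns a signed integral over \(g\) into something controlled by the low moments alone. Without it we would only bound \(|r_k|\) by a quantity not small in \(\varepsilon\).

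Next comes bookkeeping. Each root satisfies \(|\theta_i-\theta_0|\le R\), so \(|a_j|\le\binom{2J}{j}R^{2J-j}\) and \(\|a\|_1\le(1+R)^{2J}\). Write \(A\coloneqq(1+R)^{2J}\). We then prove by induction on \(k\) that
\[
|\Delta_k|\le k\,(1+R)^{2Jk}\varepsilon\quad\text{for } k\ge1 .
\]
The base cases \(k\le 2J\) are immediate because \((1+R)\ge1\) and \(k\ge1\). For the inductive step with \(k>2J\), the recursion gives
\[
|\Delta_k|\le \max_{j<2J}|\Delta_{k-2J+j}|\cdot A+R^{k-2J}A\varepsilon .
\]
The first term can involve \(\Delta_0=0\), which only helps. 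Its maximum is at most \((k-1)(1+R)^{2J(k-1)}\varepsilon\), because the bound is increasing in the index. Multiplying by \(A=(1+R)^{2J}\) gives \((k-1)(1+R)^{2Jk}\varepsilon\). The second term is at most \((1+R)^{k}\varepsilon\le(1+R)^{2Jk}\varepsilon\). Adding the two yields \(k(1+R)^{2Jk}\varepsilon\), which closes the induction and proves \eqref{eq:MCL}.

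The main obstacle is controlling the remainder \(r_k\) by \(\varepsilon\); the nonnegativity of the squared annihilator resolves it. The rest is crude constant tracking, which the generous exponent \(2Jk\) easily absorbs.
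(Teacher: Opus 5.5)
Your proof is correct and follows essentially the same route as the paper: the squared annihilating polynomial \(\prod_{j=1}^J(\theta-\theta_j)^2\) expanded around \(\theta_0\), its nonnegativity to bound the remainder by the first \(2J\) moment differences, and induction on \(k\). The only difference is bookkeeping: the paper inducts on the slightly sharper bound \((k-2J)(M+1)^{2J(k-2J)+1}\Delta_g\) for \(k>2J\), whereas you induct directly on \(k(1+R)^{2Jk}\varepsilon\) starting from the trivial cases \(k\le 2J\).
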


We combine Lemma~\ref{lem:MCL} with the fact that the chi-square divergence \(\chi^2(f_g,f_{g_0})\) can be lower bounded by a function of the first \(2J\) moment differences (see Lemma~\ref{lem:DEN-multi}). Together, these bounds imply that the score set has a controllable degree of complexity.

The dependence on \(J\) in \eqref{eq:MCL} is necessarily unfavorable: as \(J\to\infty\), the coefficient on the right-hand side diverges. Thus, Lemma~\ref{lem:MCL} is useful precisely in the finitely discrete regime. Indeed, when \(g_0\) is not finitely discrete, one can construct a finitely discrete distribution \(g\) that matches the first several moments of \(g_0\) while differing substantially in higher-order moments---for instance, via Gaussian quadrature constructions. In this case, agreement of low-order moments places essentially no restriction on high-order moments, leading to a much richer score set. This provides intuition for why Lemma~\ref{lem:CVR} fails for such \(g_0\).

\subsection{Statistical consequences}

\subsubsection{Marginal linear functionals}

Under Assumption~\ref{(GP)}, we have established the parametric-rate bound
\[
\chi^2(f_{\hat g},f_{g_0})=O_\bP\Big(\frac{1}{n}\Big)
\]
whenever the true mixing distribution \(g_0\) is finitely discrete. This chi-square control immediately yields parametric accuracy for a broad class of linear functionals. Indeed, for any \(h\) that is square-integrable with respect to \(f_{g_0}\,\dif\mu\), the Cauchy--Schwarz inequality gives
\[
\begin{aligned}
\Big|\int h(x)f_{\hat g}(x)\,\dif\mu(x)-\int h(x)f_{g_0}(x)\,\dif\mu(x)\Big|^2
&=
\Big|\int h(x)\Big(\frac{f_{\hat g}(x)}{f_{g_0}(x)}-1\Big)f_{g_0}(x)\,\dif\mu(x)\Big|^2\\
&\le
\Big(\int h^2(x)f_{g_0}(x)\,\dif\mu(x)\Big)\;
\chi^2(f_{\hat g},f_{g_0}).
\end{aligned}
\]
Thus, unlike bounds stated only in Hellinger distance, our chi-square rate implies parametric convergence for plug-in estimators of any \(L_2(f_{g_0}\dif\mu)\) functional.

\begin{corollary}\label{cor:plugin}
Assume \ref{(GP)} and suppose \(g_0\) is finitely discrete. Then for any \(h\in L_2(f_{g_0}\dif\mu)\),
\[
\Big|\int h(x)f_{\hat g}(x)\,\dif\mu(x)-\int h(x)f_{g_0}(x)\,\dif\mu(x)\Big|
=O_\bP\Big(\frac{1}{\sqrt{n}}\Big).
\]
\end{corollary}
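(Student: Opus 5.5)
The plan is to combine Theorem~\ref{thm:MAIN}, part 1, with the Cauchy--Schwarz computation displayed just before the corollary. The key observation is that the NPMLE lies in \(\cG_n(c)\) for every \(c<0\). By definition, \(\hat g\) maximizes \(\ell_n(f_g)\) over \(\cG\), and \(g_0\in\cG\), so
\[
\ell_n(f_{\hat g})\ge \ell_n(f_{g_0})\ge \ell_n(f_{g_0})+c .
\]
(In any case \(\hat g\in\cG_n(c)\) by construction, because of the union with \(\{\hat g\}\).) Fix, say, \(c=-1\). Theorem~\ref{thm:MAIN}(1) applies under \ref{(GP)} with \(g_0\) finitely discrete, and gives
\[
n\,\chi^2(f_{\hat g},f_{g_0})\le \sup_{g\in\cG_n(-1)} n\,\chi^2(f_g,f_{g_0})=O_\bP(1).
\]

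Next, fix \(h\in L_2(f_{g_0}\dif\mu)\) and consider the difference of the integrals. I would first make sure the plug-in quantity \(\int h f_{\hat g}\,\dif\mu\) is well defined. On the event \(\chi^2(f_{\hat g},f_{g_0})<\infty\) this holds, since
\[
\int |h|f_{\hat g}\,\dif\mu=\int |h|\,\frac{f_{\hat g}}{f_{g_0}}\,f_{g_0}\,\dif\mu,
\]
and the right-hand side is finite by Cauchy--Schwarz, because both \(h\) and \(f_{\hat g}/f_{g_0}\) lie in \(L_2(f_{g_0}\dif\mu)\). That event has probability tending to one by the previous step. Writing \(f_{\hat g}-f_{g_0}=(f_{\hat g}/f_{g_0}-1)f_{g_0}\) and applying Cauchy--Schwarz as displayed above gives
\[
\Big|\int h f_{\hat g}\,\dif\mu-\int h f_{g_0}\,\dif\mu\Big|\le \|h\|_{L_2(f_{g_0}\dif\mu)}\,\chi(f_{\hat g},f_{g_0}).
\]

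Finally, \(\|h\|_{L_2(f_{g_0}\dif\mu)}\) is a deterministic finite constant and \(\chi(f_{\hat g},f_{g_0})=O_\bP(n^{-1/2})\). Their product is therefore \(O_\bP(n^{-1/2})\), which is the claim. The same argument applies to any \(\tilde g\in\cG_n(c)\) with fixed \(c<0\), not only the exact NPMLE.

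There is no real obstacle. All the difficulty is contained in Theorem~\ref{thm:MAIN}. The only points needing care are the observation that the NPMLE belongs to \(\cG_n(c)\) and the well-definedness of the plug-in integral, which I handle on an event of probability tending to one.
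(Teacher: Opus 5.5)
Your proposal is correct and is the paper's argument: the paper also takes $n\,\chi^2(f_{\hat g},f_{g_0})=O_\bP(1)$ from Theorem~\ref{thm:MAIN}(1) and applies the Cauchy--Schwarz bound displayed just before the corollary. Your restriction to the event $\{\chi^2(f_{\hat g},f_{g_0})<\infty\}$ is harmless but unnecessary. The paper's verification of \ref{(A1)} shows that $\chi^2(f_g,f_{g_0})<\infty$ for every $g\in\cG$ when $\Theta$ is bounded and $g_0$ is finitely discrete, so the plug-in integral is always well defined.
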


To estimate the marginal functional \(\int h(x)f_{g_0}(x)\,\dif\mu(x)\), a baseline approach is the empirical mean
\[
\frac{1}{n}\sum_{i=1}^n h(X_i),
\]
which attains the parametric rate under the same square-integrability condition. One might therefore expect the NPMLE plug-in estimator \(\int h(x)f_{\hat g}(x)\,\dif\mu(x)\) to perform at least as well, since it exploits the additional structural information that the samples arise from a mixture model. Corollary~\ref{cor:plugin} confirms this intuition in the finitely discrete regime. The following remark discusses a different setting in which a parametric rate for such functionals has also been established.

\begin{remark}[Parametric rates beyond finitely discrete \(g_0\)]\label{rem:poisson}
For univariate Poisson mixtures, \citet{lambert1984asymptotic} revealed an interesting phenomenon: for indicator test functions \(h(x)=\ind_{\{x=k\}}\), \(k\in\bN\), the NPMLE plug-in estimator is asymptotically equivalent to the empirical mean in the sense that
\[
\sqrt{n}\Bigg(\int h(x)f_{\hat g}(x)\,\dif\mu(x)-\frac{1}{n}\sum_{i=1}^n h(X_i)\Bigg)=o_\bP(1).
\]
In particular, the plug-in estimator not only achieves the parametric rate but also inherits asymptotic normality from the empirical mean. This phenomenon, however, relies on additional regularity: \citet{lambert1984asymptotic} assume that \(g_0\) has infinitely many support points and satisfies a positive lower bound in a neighborhood of zero. A related asymptotic equivalence result was also established in \citet{van1997asymptotic}. Whether an analogous result holds for Gaussian mixtures remains an open question.
\end{remark}

\begin{remark}[Parametric rate for moment estimation]\label{rem:moment}
Corollary~\ref{cor:plugin} has an immediate implication for moment estimation. In Gaussian and Poisson mixture models, it is well known that moments of the mixing distribution can be expressed as marginal linear functionals of the form \(\int h(x)f_g(x)\,\dif\mu(x)\); see, for example, \citet{morris1982natural,lindsay1989moment}. Consequently, the corresponding moments of the NPMLE \(\hat g\) converge to those of \(g_0\) at the parametric rate. This has important consequences in the following sections. 
\end{remark}

\subsubsection{Empirical Bayes estimation}

While Corollary~\ref{cor:plugin} provides parametric rates for a broad class of marginal linear functionals, the primary object of interest in Gaussian and Poisson mixture models is often the posterior mean
\[
\bE_{g_0}[\theta| x]\coloneqq
\frac{\int_\Theta \theta\, p_{\theta}(x)\,\dif g_0(\theta)}{f_{g_0}(x)}.
\]
Its NPMLE plug-in counterpart \(\bE_{\hat g}[\theta| x]\) is a canonical empirical Bayes rule and has been widely studied, with substantial success in both theory and practice \citep{saha2020nonparametric,han2025besting,koenker_gu_2026}. We show that the parametric chi-square control established above also yields parametric rates for estimating this posterior mean.

\begin{corollary}\label{cor:postmean}
Assume \ref{(GP)} and suppose \(g_0\) is finitely discrete. Then
\[
\int \big\|\bE_{\hat g}[\theta| x]-\bE_{g_0}[\theta| x]\big\|^2\, f_{g_0}(x)\,\dif\mu(x)
=O_\bP\Big(\frac{1}{n}\Big),
\]
where \(\|\cdot\|\) denotes the Euclidean norm on \(\bR^d\). Moreover, if \(b\in\{0,d\}\), then the parametric rate also holds pointwise:
\[
\big\|\bE_{\hat g}[\theta| x]-\bE_{g_0}[\theta| x]\big\|
=O_\bP\Big(\frac{1}{\sqrt{n}}\Big),
\qquad \text{for every admissible outcome } x.
\]
\end{corollary}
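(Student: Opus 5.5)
The plan is to write the posterior mean as a ratio of two linear functionals of $g$, and to control the numerator and denominator using the parametric chi-square bound of Theorem~\ref{thm:MAIN}(1) (with $\hat g\in\cG_n(c)$ for every $c<0$). For $l\in[d]$ set
\[
N_{l,g}(x)\coloneqq \int_\Theta \theta_l\,p_\theta(x)\,\dif g(\theta),
\qquad
\bE_g[\theta_l| x]=\frac{N_{l,g}(x)}{f_g(x)}.
\]
The key structural fact is that $N_{l,g}$ is an explicit transform of the marginal $f_g$ in the $l$-th coordinate. In a Gaussian coordinate ($l\le b$) Tweedie's formula gives $N_{l,g}(x)=x_lf_g(x)+\partial_{x_l}f_g(x)$. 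In a Poisson coordinate ($l>b$) Robbins' identity gives $N_{l,g}(x)=(x_l+1)f_g(x+e_l)$.

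We decompose
\[
\bE_{\hat g}[\theta_l| x]-\bE_{g_0}[\theta_l| x]
=\frac{N_{l,\hat g}(x)-N_{l,g_0}(x)}{f_{g_0}(x)}-\bE_{\hat g}[\theta_l| x]\,\frac{f_{\hat g}(x)-f_{g_0}(x)}{f_{g_0}(x)}.
\]
Since $\Theta$ is bounded, $|\bE_{\hat g}[\theta_l| x]|\le\sup_{\theta\in\Theta}\|\theta\|$. The second term therefore has $L_2(f_{g_0}\dif\mu)$ norm at most a constant times $\chi(f_{\hat g},f_{g_0})=O_\bP(n^{-1/2})$. The integrated claim reduces to
\[
\int\frac{(N_{l,\hat g}-N_{l,g_0})^2}{f_{g_0}}\,\dif\mu=O_\bP(1/n).
\]

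To bound this we reuse the orthogonal expansion from the proof of Theorem~\ref{thm:MAIN}. Treat a Gaussian coordinate first, with $\theta_0$ a support point of $g_0$. Since $\theta_l=\theta_{0,l}+(\theta_l-\theta_{0,l})$, the function $N_{l,g}-N_{l,g_0}$ has a Hermite/Charlier expansion whose coefficients are shifts of the moment differences $m_{k,g}-m_{k,g_0}$. These are $m_{k+1}$-type differences multiplied by $k!$-normalized weights. The plan is as follows.
\begin{enumerate}[(i)]
\item Bound the weighted $L_2(f_{g_0})$ norm of $N_{l,g}-N_{l,g_0}$ by $\sum_k (k+1)\,(m_{k+1,g}-m_{k+1,g_0})^2/k!$, times the bounded factor $\phi/f_{g_0}$. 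This is possible because $\theta_0$ is a support point, as in the discussion preceding Lemma~\ref{lem:MCL}.
\item Apply Lemma~\ref{lem:MCL} (or its multivariate analogue) to bound every $|m_{k,g}-m_{k,g_0}|$ by $k\,C^{2Jk}\max_{j\le 2J}|m_{j,g}-m_{j,g_0}|$. Since $k!$ grows superexponentially, the series is dominated by a constant times $\max_{j\le 2J}|m_{j,g}-m_{j,g_0}|^2$.
\item Use the lower bound of Lemma~\ref{lem:DEN-multi}, which gives $\max_{j\le 2J}|m_{j,g}-m_{j,g_0}|^2\lesssim\chi^2(f_g,f_{g_0})$. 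Alternatively, use Remark~\ref{rem:moment}: low-order moments of $\hat g$ are $L_2$ functionals of $f_{\hat g}$, so their differences are $O_\bP(n^{-1/2})$ by Corollary~\ref{cor:plugin}.
\end{enumerate}
Chaining (i)--(iii) yields the integrated bound. A Poisson coordinate is handled the same way with Charlier polynomials, or more directly: $\int (x_l+1)^2(f_{\hat g}(x+e_l)-f_{g_0}(x+e_l))^2/f_{g_0}(x)\,\dif\mu$ is controlled by $\chi^2$ once one shows $f_{g_0}(x+e_l)(x_l+1)^2/f_{g_0}(x)\lesssim (x_l+1)$. This holds because $\Theta$ is bounded and bounded away from $0$ in the Poisson coordinates, since $g_0$ is finitely discrete. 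The factor $x_l+1$ is then absorbed through the same moment series, where the growth in $k$ is beaten by $k!$.

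For the pointwise statement with $b\in\{0,d\}$, fix $x$. In the pure Poisson case $x$ is an atom with $\mu(\{x\})=1$, so
\[
|f_{\hat g}(x)-f_{g_0}(x)|\le\sqrt{f_{g_0}(x)}\,\chi(f_{\hat g},f_{g_0})=O_\bP(n^{-1/2}),
\]
and likewise at $x+e_l$. Since $f_{g_0}(x)>0$, the ratio is stable. In the pure Gaussian case, $f_g(x)$ and $\partial_l f_g(x)$ are linear functionals of $g$, i.e.\ combinations of the moments $m_{k,g}$ with summable weights $H_k(x-\theta_0)\phi(x-\theta_0)/k!$. By (ii)--(iii) their errors are $O_\bP(n^{-1/2})$. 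The main obstacle is step (ii) in the multivariate/mixed setting: one needs a multivariate version of Lemma~\ref{lem:MCL} (with multi-indices), together with the shifted index $k+1$ arising from the factor $\theta_l$, while keeping the $k!$ decay dominant. This is presumably exactly what the proof of Theorem~\ref{thm:MAIN} already provides, and I would reuse it there.
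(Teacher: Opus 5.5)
Your proposal is correct. For the integrated bound it uses the same ingredients as the paper's Proposition~\ref{pro:postmean}:
\begin{itemize}
\item the same add-and-subtract of $f_{\hat g}(x)$ in the denominator;
\item the Hermite--Charlier expansion of the numerator around a support point $\theta_0$ (the paper's $f_g^+$ and Lemma~\ref{lem:expansion2}, with the index shift handled as in \eqref{eq:sumb});
\item Lemma~\ref{lem:MCL-multi} to reduce all moment differences to $\Delta_{\hat g}$;
\item the lower bound in Lemma~\ref{lem:DEN-multi} to get $\Delta_{\hat g}=O_\bP(n^{-1/2})$.
\end{itemize}

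The organization differs. The paper uses Cauchy--Schwarz to build a single pointwise envelope, $\mathrm{Err}_{\hat g}(x)\le (M+\sqrt d)\sqrt{C_0C_2}\,S(x)\,\Delta_{\hat g}$, where $S\in L_2(f_{g_0}\dif\mu)$ by orthogonality. Both claims are read off from this one inequality; for the pointwise claim the paper only has to check $S(x)<\infty$, using a uniform Hermite-function bound in the Gaussian case and $\mu(\{x\})=1$ in the Poisson case. You do three things differently:
\begin{itemize}
\item You bound the denominator term directly by $\sup_{\theta\in\Theta}\|\theta\|\,\chi(f_{\hat g},f_{g_0})$.
\item You compute the $L_2(f_{g_0}\dif\mu)$ norm of the numerator difference by Parseval rather than through an envelope.
\item In the pure Poisson case you get the pointwise rate from Robbins' identity plus the atomwise bound $|f_{\hat g}(y)-f_{g_0}(y)|\le\sqrt{f_{g_0}(y)}\,\chi(f_{\hat g},f_{g_0})$ at $y\in\{x,x+e_l\}$. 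This needs nothing beyond Theorem~\ref{thm:MAIN}.
\end{itemize}
The last point is genuinely more elementary than the paper's route. The paper's single-envelope form is more economical, since one inequality serves both claims. It also makes explicit that any procedure with accurate low-order moments inherits the empirical Bayes rate.

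Two small remarks.
\begin{enumerate}
\item In the Gaussian pointwise step you should justify that the weights are summable against the growth from Lemma~\ref{lem:MCL-multi}, i.e.\ that $\sum_\alpha |\alpha|(M+1)^{2J|\alpha|}|q_\alpha(x)|/\alpha!<\infty$. This follows either from the Hermite bound used in the paper or from the fact that the Hermite generating function is entire.
\item In your alternative Poisson integrated argument, $(x_l+1)f_{g_0}(x+e_l)/f_{g_0}(x)=\bE_{g_0}[\theta_l| x]$, so only $\sup_\Theta\theta_l<\infty$ is needed. Under \ref{(GP)}, $\Theta$ need not be bounded away from $0$. The Charlier-series route you list first already covers this case, so the vaguer ``absorb the factor $x_l+1$'' variant is unnecessary.
\end{enumerate}
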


\begin{remark}[Connection to moment estimation]
To control the error in empirical Bayes estimation, we take an approach that differs from standard arguments. In one dimension,\footnote{See Proposition~\ref{pro:postmean} for the multivariate version.} we show that there exists an envelope function \(h\in L_2(f_{g_0}\dif\mu)\) such that
\[
|\bE_{\hat g}[\theta| x]-\bE_{g_0}[\theta| x]|
\le
h(x)\,\max_{k\in [2|\mathrm{supp}(g_0)|]}|m_{k,\hat{g}}-m_{k,g_0}|,
\]
where $|\mathrm{supp}(g_0)|$ denotes the number of support points of $g_0$. In this way, the parametric rate for posterior mean estimation is inherited from the parametric rate for moment estimation; see Remark~\ref{rem:moment}. Our argument also suggests a broader message: any procedure that delivers sufficiently accurate moment estimation automatically yields accurate empirical Bayes estimation through the same mechanism. This includes, for instance, the denoised method of moments estimator of \citet{wu2020optimal}.
\end{remark}

\begin{remark}[Differences in criteria from the existing literature]
As in our chi-square analysis, the present empirical Bayes results are not directly comparable to much of the existing literature. One key distinction is that we study the NPMLE computed over a bounded parameter space \(\Theta\). In addition, our analysis assumes the existence of a true mixing distribution \(g_0\), i.e., a random-effects model. In empirical Bayes problems without an underlying \(g_0\) (the ``deterministic'' or ``compound decision'' setting), the standard performance criterion is instead
\[
\frac{1}{n}\sum_{i=1}^n \big\|\bE_{\hat g}[\theta| X_i]-\bE_{g_n}[\theta| X_i]\big\|^2,
\]
where \(g_n\) denotes the empirical distribution of the latent parameters \(\theta_1,\dots,\theta_n\) associated with the observations. Extending our arguments to this setting would require nonasymptotic control of the score set in Lemma~\ref{lem:CVR}, which we leave for future work. 
\end{remark}

\subsubsection{Demixing}

We now turn to the fundamental question of how well \(\hat g\) recovers the mixing distribution \(g_0\). Since the NPMLE is typically discrete and its support need not align with that of \(g_0\), a natural discrepancy measure is the \(r\)-Wasserstein distance,
\[
\mathcal{W}_r(g_1,g_2)\coloneqq \inf_{(U,V)\in\Pi(g_1,g_2)} \big(\bE\|U-V\|^r\big)^{1/r},
\]
where \(g_1\) and \(g_2\) are probability measures on \(\bR^d\) with finite \(r\)-th moments, and \(\Pi(g_1,g_2)\) denotes the set of couplings of \(g_1\) and \(g_2\).

Recovering \(g_0\) is a demixing problem and is known to be intrinsically difficult: for general \(g_0\), only logarithmic convergence rates for the Wasserstein distance are possible in Gaussian mixtures \citep{dedecker2013minimax} and in Poisson mixtures \citep{miao2024fisher}. The following corollary shows, however, that—as in marginal density estimation and empirical Bayes estimation—the NPMLE can adapt to additional structure: when \(g_0\) is finitely discrete, \(\hat g\) achieves a substantially better performance that is similar to what was previously established for over-parametrized finite mixture models. 

\begin{corollary}\label{cor:wasserstein}
Assume \ref{(GP)} and suppose \(g_0\) is finitely discrete. Then
\[
\mathcal{W}_1(\hat g,g_0)
=O_\bP\Big(\frac{|\mathrm{supp}(\hat g)|}{n^{1/4}}\Big),
\]
where \(|\mathrm{supp}(\hat g)|\) denotes the number of support points of \(\hat g\). Moreover, when \(d=1\), the factor \(|\mathrm{supp}(\hat g)|\) can be removed.
\end{corollary}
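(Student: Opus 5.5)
We start from the moment control of Remark~\ref{rem:moment}. Corollary~\ref{cor:plugin} holds for the NPMLE, and for every multi-index \(\alpha\) the moment \(\int\theta^\alpha\,\dif g(\theta)\) equals a marginal linear functional \(\int h_\alpha f_g\,\dif\mu\). In the Poisson coordinates \(h_\alpha\) is a falling factorial. In the Gaussian coordinates it is a Hermite polynomial. In both cases \(h_\alpha\in L_2(f_{g_0}\dif\mu)\), because \(\Theta\) is bounded. Hence every fixed-order moment difference satisfies \(|m_{\alpha,\hat g}-m_{\alpha,g_0}|=O_\bP(n^{-1/2})\). In particular this holds for the moments of order up to \(2J\) (respectively \(2J-1\)), where \(J=|\mathrm{supp}(g_0)|\).

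The second step converts these moment bounds into a Wasserstein bound. We do this with a local moment-comparison inequality for a discrete \(g_0\), in the spirit of \citet{ho2016strong} and \citet{wu2020optimal}. Write \(g_0=\sum_{j=1}^J w_j\delta_{\theta_j}\) and fix the separation \(\Delta=\min_{j\neq j'}\|\theta_j-\theta_{j'}\|>0\). For \(d=1\) the claim is
\[
\mathcal W_1(g,g_0)\lesssim \max_{k\le 2J}|m_{k,g}-m_{k,g_0}|^{1/2},
\qquad g\in\cG,
\]
where the implicit constant depends only on \(g_0\) and \(\Theta\). This gives \(\mathcal W_1(\hat g,g_0)=O_\bP(n^{-1/4})\) with no dependence on \(|\mathrm{supp}(\hat g)|\). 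I would prove the inequality in two regimes.

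\emph{Global regime.} By compactness of \(\cG\) in the weak topology and identifiability of \(g\) from its moments on a bounded set, a small moment difference forces \(g\) to be \(\mathcal W_1\)-close to \(g_0\). So \(g\) places mass \(\approx w_j\) near each \(\theta_j\) and little mass elsewhere.

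\emph{Local regime.} Here I test against polynomials. Take \(P(\theta)=\prod_{j}(\theta-\theta_j)^2\), which has degree \(2J\). Then \(\int P\,\dif g=\int P\,\dif(g-g_0)\) is bounded by the moment differences. On the other hand, \(\int P\,\dif g\gtrsim \sum_j\int_{B_j}(\theta-\theta_j)^2\,\dif g\) over neighbourhoods \(B_j\) of the atoms, plus the mass outside \(\bigcup_j B_j\). Combining this with Lagrange interpolation polynomials of degree \(J-1\), which control the mass differences \(|g(B_j)-w_j|\), and with Cauchy--Schwarz, \(\int_{B_j}|\theta-\theta_j|\,\dif g\le(\int_{B_j}(\theta-\theta_j)^2\,\dif g)^{1/2}\), yields the square-root bound. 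This square-root loss is exactly why the rate is \(n^{-1/4}\) rather than \(n^{-1/2}\). No step of this argument uses the number of atoms of \(g\).

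For \(d>1\) the same polynomial-testing argument does not close, because multivariate interpolation is less clean. I would instead argue as in \citet{ho2016strong} for overparameterized mixtures. Project onto finitely many directions, or use the multivariate moment tensors of order up to \(2J\). Within each cluster around \(\theta_j\), bound the second-order deviation sums \(\sum_{\text{atoms }a\text{ of }\hat g\text{ near }\theta_j}\hat w_a\|\hat\theta_a-\theta_j\|^2\) by the moment differences. Then apply Cauchy--Schwarz over the atoms of \(\hat g\), which costs a factor depending on \(|\mathrm{supp}(\hat g)|\). Carrying this out crudely yields \(\mathcal W_1\lesssim|\mathrm{supp}(\hat g)|\,n^{-1/4}\).

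The main obstacle is the local lower bound: showing that the degree-\(2J\) moment differences dominate the first absolute moment within clusters, uniformly over all \(g\) and not only finite mixtures of bounded order. The first thing I would try is the dual characterization of \(\mathcal W_1\) via Lipschitz test functions. I would approximate each Lipschitz function near the atoms by a polynomial of degree at most \(2J\) plus a remainder controlled by \(\int P\,\dif g\). This reduces everything to the single quantity \(\int P\,\dif g=O_\bP(n^{-1/2})\).
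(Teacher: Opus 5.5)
Your one-dimensional argument is correct, but it is not the paper's route. The paper gets $\Delta_{\hat g}=O_\bP(n^{-1/2})$ from Theorem~\ref{thm:MAIN} and the lower bound in Lemma~\ref{lem:DEN-multi}. That step is the same Cauchy--Schwarz-against-polynomials argument as your use of Corollary~\ref{cor:plugin}. The paper then cites Proposition~5 of \citet{wu2020optimal} for $\mathcal W_1\le C\Delta^{1/2}$ in one dimension. For $d>1$ it uses Lemma~\ref{lem:W1}, which picks a direction $c$ by a union bound. Projection onto $c$ must neither merge atoms of $g_0$ nor shrink any of the $J\,|\mathrm{supp}(\hat g)|$ differences between atoms of $\hat g$ and $g_0$ by more than a factor $2J|\mathrm{supp}(\hat g)|\sqrt d$. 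The one-dimensional result is then applied to the projections, and the factor $|\mathrm{supp}(\hat g)|$ comes exactly from that second union bound.

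Your test-polynomial proof is self-contained and replaces the citation. It uses $P=\prod_j(\theta-\theta_j)^2$ to bound $\int\min_j|\theta-\theta_j|^2\,\dif g$, weighted Cauchy--Schwarz for the cost of moving mass to the nearest atom, and Lagrange polynomials for the cluster masses. It already holds for every $g\in\cG$, because on Voronoi cells $P(\theta)\ge(\gamma/2)^{2(J-1)}\min_j|\theta-\theta_j|^2$, where $\gamma$ is the separation of $\mathrm{supp}(g_0)$. So your compactness ``global regime'' is superfluous. Its justification would in any case rest on $2J$ moments determining a $J$-atomic $g_0$, which is this same $P$-argument, not Hausdorff determinacy. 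The ``main obstacle'' in your last paragraph is also already handled by the weighted Cauchy--Schwarz step, which is uniform in $g$.

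Your proposal falls short for $d>1$. As written, it does not explain how the masses $\hat g(B_j)$ are matched to $w_j$. Also, a Cauchy--Schwarz weighted by $\hat g$ costs no factor of $|\mathrm{supp}(\hat g)|$, so it is unclear what argument you have in mind. Your reason for abandoning the polynomial method is mistaken. Take $P(\theta)=\prod_j\|\theta-\theta_j\|^2$, of degree $2J$, and $L_j(\theta)=\prod_{i\neq j}\langle\theta-\theta_i,\theta_j-\theta_i\rangle/\|\theta_j-\theta_i\|^2$, of degree $J-1$ with $L_j(\theta_i)=\mathbf{1}_{\{i=j\}}$. Expand in powers of $\theta-\theta_0$ and note that every entry of $m_{k,\hat g}-m_{k,g_0}$ is bounded by its spectral norm. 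Then $|\int P\,\dif(\hat g-g_0)|$ and $|\int L_j\,\dif(\hat g-g_0)|$ are both at most $C\Delta_{\hat g}$. The Voronoi lower bound on $P$ holds verbatim with Euclidean norms.

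Your one-dimensional argument therefore gives $\mathcal W_1(\hat g,g_0)\le C\Delta_{\hat g}^{1/2}=O_\bP(n^{-1/4})$, with $C$ depending only on $J$, $d$, $M$ and $\gamma$. This proves the corollary. It also shows that the $|\mathrm{supp}(\hat g)|$ factor, which comes from the projection step in Lemma~\ref{lem:W1}, is not needed in any dimension. Use this in place of your $d>1$ paragraph.
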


Our result is closest in spirit to Theorem 13 of \citet{soloff2025multivariate}. That work covers multivariate Gaussian location mixtures and proves a comparable \(\mathcal{W}_2\) rate for the unconstrained NPMLE in the special case where \(g_0\) is a point mass. Our approach, in contrast, draws on ideas from the method-of-moments literature. In one dimension, it essentially relies on Proposition 5 of \citet{wu2020optimal}, which yields the inequality
\[
\mathcal{W}_1(\hat g,g_0)\le C\,\sqrt{\max_{k\in [2|\mathrm{supp}(g_0)|]}|m_{k,\hat{g}}-m_{k,g_0}|},
\]
where \(C\) is a constant depending on the structure of \(g_0\) and the diameter of \(\Theta\). As in the empirical Bayes case, the resulting \(n^{-1/4}\) rate is inherited from the parametric rate for moment estimation.

For the MLE in overparameterized finite mixture models, \citet{ho2016strong} establish a general \(\mathcal{W}_2\) convergence rate of order \(n^{-1/4}\) (up to logarithmic factors) under strong identifiability conditions. In particular, Corollary 4.1 of \citet{ho2016strong} specializes this bound to Gaussian location mixtures. Our result shows that the NPMLE can exhibit the same “parametric” \(n^{-1/4}\) behavior, at least in the one-dimensional Gaussian and Poisson settings.

When \(d>1\), the additional factor \(|\mathrm{supp}(\hat g)|\) appears as a compromise in our multivariate extension. Controlling the support size of the NPMLE is itself a challenging and largely open problem. A notable recent work, \citet{polyanskiy2020self}, shows that \(|\mathrm{supp}(\hat g)|=O_\bP(\log n)\) when the true mixing distribution \(g_0\) is sub-Gaussian---a condition that is automatically satisfied when \(g_0\) is finitely discrete. Their result, however, is currently limited to the one-dimensional setting. Our results provide further motivation for extending such support-size bounds to the multivariate case, since they would immediately yield sharper Wasserstein rates.

\section{Adaptivity phenomenon for the likelihood rate test}

Beyond the adaptivity of the NPMLE in estimation, the next theorem shows that the NPMLE can also exhibit an adaptivity phenomenon in inference. Recall the likelihood ratio statistic \(L_n(\cG,g_0)\) defined in the introduction. We have the following result.

\begin{theorem}\label{thm:MAIN2}
Suppose Assumption~\ref{(GP)} holds.
\begin{enumerate}
\item If \(g_0\) is discrete with finitely many support points, then for \(\cS\) defined in \eqref{eq:defS-multi} in the Appendix
\[
L_n(\cG,g_0) \Dkonv \sup_{s \in \cS} \bigl[(\bG(s))_+\bigr]^2,
\]
where \(x_+\coloneqq\max\{x,0\}\), and \(\bG\) is a centered Gaussian process indexed by \(\cS\) with covariance
\[
\bE[\bG(s_1)\bG(s_2)] = \bE[s_1(X)s_2(X)], 
\qquad X \sim f_{g_0}.
\]
\item If \(g_0\) is not finitely discrete, then \(L_n(\cG,g_0)\) diverges to infinity in probability.
\end{enumerate}
\end{theorem}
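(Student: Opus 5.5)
For part 1, I would follow the classical Gassiat / Azaïs--Gassiat--Mercadier route, using the Donsker property of the score set established for finitely discrete \(g_0\). That property is the key input already used to prove the first part of Theorem~\ref{thm:MAIN} via Lemma~\ref{lem:CVR}. Write each \(f=f_g\in\cF\backslash f_{g_0}\) as
\[
f/f_{g_0}=1+\chi(f,f_{g_0})\,s_f .
\]
The log-likelihood ratio is then
\[
\ell_n(f)-\ell_n(f_{g_0})=\sum_{i=1}^n\log\bigl(1+t\,s_f(X_i)\bigr),\qquad t=\chi(f,f_{g_0}).
\]
By Lemma~\ref{lem:CVR}, only \(f\) with \(t=O_\bP(n^{-1/2})\) are relevant near the maximum. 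I would expand
\[
\log(1+u)=u-\tfrac{u^2}{2}+u^2R(u),
\]
with \(R(u)\to0\) as \(u\to0\) and a one-sided bound \(\log(1+u)\le u-u^2/2+u^3/3\) for \(u\ge -1\). The square-integrable envelope gives \(\max_i\sup_s|s(X_i)|=o_\bP(\sqrt n)\), so \(t\max_i|s_f(X_i)|=o_\bP(1)\) uniformly over the relevant set. The Donsker property with an envelope gives a uniform law of large numbers, \(\sup_{s}|n^{-1}\sum_i s(X_i)^2-1|\to0\). Combining these,
\[
2\{\ell_n(f)-\ell_n(f_{g_0})\}=2\sqrt n\,t\,\bG_n(s_f)-n t^2\bigl(1+o_\bP(1)\bigr),
\]
uniformly in \(f\), where \(\bG_n\) is the empirical process.

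Next I would maximize in \(t\ge0\) for fixed direction. This yields the upper bound
\[
L_n\le \sup_{s\in\cS}\bigl[(\bG_n(s))_+\bigr]^2+o_\bP(1).
\]
For the matching lower bound, I need that every \(s\) in the closure \(\cS\) of \eqref{eq:defS-multi} is attainable as a limit of scores \(s_{f_{g_t}}\) along paths with \(\chi(f_{g_t},f_{g_0})=t\to0\). I would take \(\cS\) to include such limiting directions, which is presumably how \eqref{eq:defS-multi} is defined. Given this, I plug in \(t=(\bG_n(s))_+/\sqrt n\) along the paths and take a sup over a countable dense subset. The lower bound then follows from the same expansion. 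Finally, the continuous mapping theorem applied to \(\bG_n\rightsquigarrow\bG\) in \(\ell^\infty(\cS)\), composed with the continuous functional \(z\mapsto\sup_s (z(s))_+^2\), gives the limit.

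The main obstacle is the lower bound. Specifically, I must check that directions in \(\cS\) obtained as limits of normalized scores are realized by genuine mixing distributions \(g_t\to g_0\) inside \(\Theta\), with \(\|s_{f_{g_t}}-s\|_{L_2(f_{g_0})}\to0\), including boundary effects when support points of \(g_0\) lie on \(\partial\Theta\). I would build these paths by perturbing the weights and locations of the atoms of \(g_0\), and by mixing in small mass \(\epsilon\delta_\vartheta\). I would then use the moment structure from Lemma~\ref{lem:MCL} to identify the limiting directions.

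For part 2, I would reduce to the second part of Theorem~\ref{thm:MAIN} together with a localized argument. By that result, for any \(c>0\), with probability tending to one there are \(g\) with \(\ell_n(f_g)\ge\ell_n(f_{g_0})+c\). Hence \(L_n\ge 2c\) for every fixed \(c\) with probability tending to one, which already gives divergence provided the result holds for arbitrary large \(c\). I would expect the construction behind Theorem~\ref{thm:MAIN}(2) to deliver exactly this. It uses Gaussian quadrature to build finitely discrete \(g\) that match many moments of \(g_0\), yielding a non-Donsker family of normalized scores with \(\sup_s\bG_n(s)\to\infty\). Plugging in \(t=\bG_n(s)/\sqrt n\) for a finite but growing collection of such directions then gives \(\ell_n(f_g)-\ell_n(f_{g_0})\to\infty\). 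The care needed is to keep \(t\max_i|s(X_i)|\) small along these chosen directions so that the quadratic expansion remains valid.
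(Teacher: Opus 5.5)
Your upper bound in part~1 is sound and is essentially the paper's argument: localization by Lemma~\ref{lem:CVR}, a second-order expansion with the envelope controlling $\max_i|s(X_i)|/\sqrt n$, a uniform law of large numbers for $\{s^2:s\in\cS\}$, maximization over $t\ge 0$, then continuous mapping. Taking the Donsker property of \eqref{eq:defS-multi} from the proof of part~1 of Theorem~\ref{thm:MAIN} is also legitimate, since that verification (Lemmas~\ref{lem:DEN-multi} and \ref{lem:MCL-multi} plus the ellipsoid Donsker theorem of van der Vaart and Wellner) is shared by both theorems. The gap is the lower bound, which you leave open because you misread \eqref{eq:defS-multi}. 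It is not a closure of local limiting directions. The expression inside it equals $(f_g/f_{g_0}-1)/\chi(f_g,f_{g_0})$, so $\cS=\{s_{f_g}:g\in\cG\backslash g_0\}$, which includes scores of $g$ far from $g_0$. The missing observation is that $\cG$ is convex (Assumption~\ref{(SS)}). For $g\neq g_0$ and $\tau=\chi(f_g,f_{g_0})$, the measures $g_t=(1-t/\tau)g_0+(t/\tau)g$, $t\in[0,\tau]$, lie in $\cG$, satisfy $\chi(f_{g_t},f_{g_0})=t$, and have \emph{exactly} the score $s_{f_g}$. Plugging in $t=(\bG_n(s))_+/\sqrt n$ then gives $L_n(\cG,g_0)\ge[(\bG_n(s))_+]^2+o_\bP(1)$ for each fixed $s\in\cS$ (Lemma~\ref{lem:LOW}). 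No perturbation of atom weights or locations, no appeal to Lemma~\ref{lem:MCL}, and no boundary analysis is needed. Separately, passing from fixed $s$ to $\sup_{s\in\cS}$ through a countable dense subset does not suffice, because the $o_\bP(1)$ terms need not be uniform in $s$: the length $\tau$ of the segment realizing a given direction need not be bounded away from zero over $\cS$. Use instead a finite $1/m$-net of the totally bounded class $\cS$ with a union bound, and then asymptotic equicontinuity of $\bG_n$ to compare the maximum over the net with the supremum, as in the proof of Theorem~\ref{thm:ASY}.

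Part~2 has a more serious gap. The reduction to part~2 of Theorem~\ref{thm:MAIN} is invalid as stated. Since $\cG_n(c)$ contains $\hat g$ by definition, divergence of $\sup_{g\in\cG_n(c)}n\chi^2(f_g,f_{g_0})$ does not imply that some $g$ has $\ell_n(f_g)\ge\ell_n(f_{g_0})+c$; the supremum could be attained at $\hat g$ with a smaller likelihood gain. In the paper, both divergences are derived from likelihood-ratio limits on submodels, not conversely. Your Gaussian-quadrature fallback is a heuristic rather than an argument. $\bG_n$ over a non-Donsker class does not converge to a tight limit, and ``$\sup_s\bG_n(s)\to\infty$'' is precisely what must be shown. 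Mere richness of $\cS$ is not the mechanism either: when $\Theta$ is infinite, the span of $\cS$ is infinite-dimensional even for finitely discrete $g_0$ (e.g.\ the functions $p_\vartheta/f_{g_0}-1$ for distinct $\vartheta\notin\mathrm{supp}(g_0)$ are linearly independent), yet the limit is tight there. The missing idea is that, when $g_0$ is not finitely discrete, $\cS$ contains the \emph{entire unit sphere} of a $K$-dimensional subspace of $L_2(f_{g_0}\dif\mu)$ for every $K$. Gram--Schmidt applied to powers of a coordinate whose $g_0$-marginal has infinite support gives polynomials $q_1,\dots,q_K$ that are orthonormal in $L_2(g_0)$ and orthogonal to constants. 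Hence $\dif g=(1+\sum_{k=1}^K c_kq_k)\dif g_0$ is a probability measure for all sufficiently small $c$ of either sign. The resulting submodel $\cG^{\le K}$ is star-shaped. Its score set is $\{\sum_k c_kh_k/\|\sum_k c_kh_k\|_{L_2(f_{g_0}\dif\mu)}:c\in\bS^{K-1}\}$, where $h_k=\int p_\theta\, q_k(\theta)\,\dif g_0(\theta)/f_{g_0}$. These $h_k$ are linearly independent by identifiability (Lemma~\ref{lem:DEN-multi} plus determinacy of the moment problem on bounded $\Theta$). This finite-dimensional class is Donsker, so Theorem~\ref{thm:ASY} gives $L_n(\cG^{\le K},g_0)\Dkonv\chi^2(K)$. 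Since $L_n(\cG,g_0)\ge L_n(\cG^{\le K},g_0)$ and $K$ is arbitrary, divergence follows. For $g_0$ with $J$ atoms, such two-sided multiplicative perturbations span at most $J-1$ dimensions, which is exactly where finite discreteness enters.
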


A similar convergence--divergence dichotomy has so far only been established in dimension $d=1$ in the special case where \(g_0\) is a point mass. In a seminal work on the failure of the likelihood ratio test for Gaussian location mixtures, \citet{hartigan1985failure} showed that \(L_n(\cG,g_0)\) diverges when \(\Theta\) is unbounded. Later, \citet{azais2009likelihood} proved that \(L_n(\cG,g_0)\) converges to a tight limit when \(\Theta\) is bounded, for both Gaussian and Poisson mixtures. These results suggest that, in the point-mass case, the ``effective dimension'' of the model class is finite if and only if \(\Theta\) is bounded. Our theorem sharpens this perspective: even with bounded \(\Theta\), the effective dimension is not determined solely by the model class, but also depends on the structure of the true mixing distribution \(g_0\).

\begin{remark}[An application to testing the number of components]\label{rem:LRTfinite}
Theorem~\ref{thm:MAIN2} has a direct application to testing
\[
H_0:\ g_0\in\cG_K
\qquad \text{versus} \qquad
H_1:\ g_0\in\cG\setminus \cG_K,
\]
where \(\cG_K\) denotes the set of discrete distributions on \(\Theta\) with at most \(K\) support points. A natural likelihood ratio test is based on the statistic
\[
2\big\{\ell_n(f_{\hat g})-\ell_n(f_{\hat g_K})\big\}
=
2\big\{\ell_n(f_{\hat g})-\ell_n(f_{g_0})\big\}
-
2\big\{\ell_n(f_{\hat g_K})-\ell_n(f_{g_0})\big\},
\]
where \(\hat g_K\) denotes the MLE over \(\cG_K\). An asymptotic characterization of the second term,
\(2\{\ell_n(f_{\hat g_K})-\ell_n(f_{g_0})\}\), was obtained in \citet{liu2003asymptotics, dacunha1997testing}. Theorem~\ref{thm:MAIN2} complements this by providing the asymptotic behavior of the first term,
\(2\{\ell_n(f_{\hat g})-\ell_n(f_{g_0})\}\), thereby yielding the asymptotic distribution of the full likelihood ratio statistic.
\end{remark}

\begin{remark}[Inferential properties of the NPMLE]\label{rem:infer}
Despite the extensive literature on estimation with the NPMLE, rigorous results on its inferential properties are surprisingly sparse. Two notable exceptions are as follows.
\begin{itemize}
\item As noted in Remark~\ref{rem:poisson}, for univariate Poisson mixtures, \citet{lambert1984asymptotic} established asymptotic normality for NPMLE plug-in estimators of marginal probabilities, under a class of mixing distributions \(g_0\) that is not finitely discrete in a neighborhood of zero.
\item Consider mixtures of step-function kernels, 
\[
p_\theta(x)=\frac{1}{\theta}\ind_{\{x\in[0,\theta]\}}.
\]
Mixtures of these kernels characterize the class of non-increasing densities on \((0,\infty)\). In this setting, \citet{groeneboom2015nonparametric} derived the asymptotic limit of a likelihood ratio statistic of the form
\[
2\big\{\ell_n(f_{\hat g})-\ell_n(f_{\hat g_0})\big\},
\]
where \(\hat g_0\) denotes the NPMLE under the additional constraint \(f_{\hat g_0}(x_0)=f_{g_0}(x_0)\) for some \(x_0>0\). Their analysis requires that \(f_{g_0}\) have a continuous, strictly negative derivative in a neighborhood of \(x_0\), a condition that in particular rules out finitely discrete \(g_0\).
\end{itemize}
In contrast to these results, Theorem~\ref{thm:MAIN2} shows that the likelihood ratio statistic we consider becomes substantially more tractable precisely when \(g_0\) is finitely discrete.
\end{remark}

\section{Generality of nonparametric phenomena}\label{sec:generality}

In this section, we develop general results describing the nonparametric behavior of mixture models when \(g_0\) is not finitely discrete. Our analysis requires only the following mild condition, whose key content is identifiability of the mixing distribution through the chi-square divergence between the induced marginals.

\begin{enumerate}
\renewcommand{\theenumi}{(ID)}
\renewcommand{\labelenumi}{\theenumi}
\item \label{(ID)} The parameter space \(\Theta\subseteq\bR^d\) is bounded, and for any \(g\in\cG\),
\[
\chi^2(f_g,f_{g_0})=0 \quad \Longleftrightarrow \quad g=g_0.
\]
\end{enumerate}

Assumption~\ref{(ID)} already suffices to yield the nonparametric phenomena observed in the previous sections.

\begin{theorem}\label{thm:diverge}
Assume \ref{(ID)} and suppose \(g_0\) is not finitely discrete. Then for any \(c>0\),
\[
\sup_{g\in\cG_n(c)} n\,\chi^2(f_g,f_{g_0})\to\infty
\qquad\text{in probability.}
\]
Moreover, \(L_n(\cG,g_0)\to\infty\) in probability.
\end{theorem}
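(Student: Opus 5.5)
The plan is to show that, when \(g_0\) has infinitely many support points, the model contains a parametric submodel through \(g_0\) of arbitrarily large dimension \(K\). This submodel should be regular enough that its local log-likelihood ratio is asymptotically quadratic with a nonsingular \(K\times K\) information matrix. Both claims then follow from \(\chi^2_K\)-type behaviour with \(K\to\infty\).

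\emph{Construction of directions.} Since \(g_0\) is not finitely discrete, for every \(K\) I can pick disjoint Borel sets \(A_0,A_1,\dots,A_K\subseteq\Theta\) with \(g_0(A_k)>0\) for all \(k\). Set \(g_{0,k}\coloneqq g_0(\cdot\cap A_k)/g_0(A_k)\) and define the signed, mass-zero measures \(h_k\coloneqq g_{0,k}-g_{0,0}\) for \(k\in[K]\). For \(t\in\bR^K\) with \(\|t\|\le R\) and \(n\) large, the measure
\[
g_{n,t}\coloneqq g_0+n^{-1/2}\sum_k t_k h_k
\]
is a probability measure on \(\Theta\). This holds because its density with respect to \(g_0\) is \(1+O(R/\sqrt n)\min_k g_0(A_k)^{-1}\). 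By linearity of \(g\mapsto f_g\),
\[
\frac{f_{g_{n,t}}}{f_{g_0}}-1=n^{-1/2}\sum_k t_k u_k,\qquad u_k\coloneqq \frac{f_{g_{0,k}}-f_{g_{0,0}}}{f_{g_0}} .
\]
Each \(u_k\) is bounded, since \(f_{g_{0,k}}\le f_{g_0}/g_0(A_k)\), and satisfies \(\int u_k f_{g_0}\,\dif\mu=0\). Consequently the identity \(n\chi^2(f_{g_{n,t}},f_{g_0})=t^\top\Sigma t\) holds exactly, where \(\Sigma\coloneqq \bE[u(X)u(X)^\top]\). To see that \(\Sigma\) is nonsingular, suppose \(a^\top u=0\) \(f_{g_0}\)-a.e. for some \(a\neq0\). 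Then \(g_0+\varepsilon\sum_k a_kh_k\) is, for small \(\varepsilon\), a probability measure different from \(g_0\) (the sets \(A_k\) are disjoint) that induces the same marginal \(f_{g_0}\). This contradicts \ref{(ID)}.

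\emph{Local expansion.} Since \(u\) is bounded, I would use \(\log(1+x)=x-x^2/2+O(|x|^3)\) uniformly for \(|x|\le CR/\sqrt n\) together with the law of large numbers for \(n^{-1}\sum_i u(X_i)u(X_i)^\top\). This should give
\[
\ell_n(f_{g_{n,t}})-\ell_n(f_{g_0})=t^\top Z_n-\tfrac12 t^\top\Sigma t+o_\bP(1),
\]
uniformly in \(\|t\|\le R\), where \(Z_n\coloneqq n^{-1/2}\sum_i u(X_i)\Dkonv N(0,\Sigma)\).

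\emph{Conclusion.} I would take \(t=\hat t_n\coloneqq\Sigma^{-1}Z_n\), truncated to \(\|t\|\le R\); truncation is inactive with probability at least \(1-\eta(R)\), where \(\eta(R)\to0\) as \(R\to\infty\). With \(Q_n\coloneqq Z_n^\top\Sigma^{-1}Z_n\Dkonv\chi^2_K\), this choice gives likelihood gain \(Q_n/2+o_\bP(1)\) and \(n\chi^2(f_{g_{n,\hat t_n}},f_{g_0})=Q_n\). Fix \(c>0\) and \(M>0\), and choose \(K\) with \(\bP(\chi^2_K\le 2c+2M+2)\) small. Then, with probability close to one for large \(n\), \(g_{n,\hat t_n}\in\cG_n(c)\) and \(n\chi^2\ge M\), which proves the first claim. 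For the second claim, \(L_n(\cG,g_0)\ge 2\{\ell_n(f_{g_{n,\hat t_n}})-\ell_n(f_{g_0})\}=Q_n+o_\bP(1)\), so \(\liminf_n\bP(L_n\le M)\le\bP(\chi^2_K\le M+1)+\eta(R)\). Letting \(K,R\to\infty\) gives divergence.

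The step I expect to need the most care is the construction of the \(K\) directions. They must keep \(g_{n,t}\) inside \(\cG\), which requires \(\Theta\) to contain the supports, and this holds automatically since the \(A_k\subseteq\Theta\). They must also have bounded scores and a nonsingular \(\Sigma\); as shown above, this is exactly where \ref{(ID)} and the infinite support of \(g_0\) enter. Once these are in place, the remainder is a routine LAN-type argument.
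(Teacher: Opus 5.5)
Your proof is correct, and it takes a genuinely different route from the paper.

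\textbf{The paper's route.} The paper builds the order-$K$ submodel from orthonormal polynomials of $g_0$, $\dif g=(1+\sum_{k\le K}c_kq_k)\dif g_0$. It shows that the score set of this submodel is the normalized span of $K$ bounded functions, which are linearly independent by \ref{(ID)}; this is the same nondegeneracy argument you give for $\Sigma$. It then invokes its general star-shaped result, Theorem~\ref{thm:ASY}, which rests on the Donsker property and the localization Lemma~\ref{lem:CVR}. This yields that both $n\chi^2(f_{\hat g_K},f_{g_0})$ and the submodel likelihood ratio of an approximate MLE $\hat g_K$ converge to $\chi^2(K)$. Portmanteau and $K\to\infty$ then finish the proof.

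\textbf{Your route.} You replace the polynomial directions by the indicator-based directions $g_{0,k}-g_{0,0}$. You also replace the MLE analysis by an elementary LAN expansion on a $\sqrt n$-ball, evaluated at the explicit point $\hat t_n=\Sigma^{-1}Z_n$. The theorem only needs lower bounds on $L_n(\cG,g_0)$ and on $\sup_{g\in\cG_n(c)}n\chi^2(f_g,f_{g_0})$, so this suffices and avoids all empirical-process machinery.

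Moreover, your perturbation densities with respect to $g_0$ are bounded whatever $\Theta$ is. The polynomial construction, by contrast, needs $g_0$ to have bounded support (here guaranteed by bounded $\Theta$), so that the $q_k$ are bounded and $1+\sum_k c_kq_k\ge 0$ is achievable. Your argument therefore shows that the boundedness requirement in \ref{(ID)} is not needed for this theorem. What the paper's route buys in exchange is the sharper Theorem~\ref{thm:INF-multi}: exact $\chi^2(K)$ limits for the submodel MLE, obtained inside its unified star-shaped framework.

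\textbf{Two small points.} First, the existence of the sets $A_k$ deserves a line. The support of $g_0$ is infinite, so you can take disjoint open balls around $K+1$ support points. Second, for divergence in probability you need a bound on $\limsup_n\bP(L_n(\cG,g_0)\le M)$, not on the $\liminf$. Since $\eta$ depends on $K$ through $\Sigma$, fix $K$, let $R\to\infty$, and only then let $K\to\infty$.
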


A closely related nonparametric phenomenon was observed by \citet{hartigan1985failure} in his study of likelihood ratio tests for a seemingly ``parametric'' mixture model. 
Hartigan considered Gaussian mixtures of the form
\[
f_{\theta,t}(x)\coloneqq(1-t)\phi(x)+t\,\phi(x-\theta),
\]
with \(\theta\in\bR\) and \(t\in[0,1]\). For any \(K\ge 1\), he introduced the subclass
\[
\cF^{K}\coloneqq\bigl\{f_{\theta,t}:\ \theta\in\{\theta_1,\dots,\theta_K\},\ t\in[0,1]\bigr\}.
\]
Assuming the data are generated from the standard normal density \(\phi\), Hartigan showed that if the locations \(\{\theta_1,\dots,\theta_K\}\) are sufficiently well separated, then the likelihood ratio statistic \(\sup_{f \in \cF^K}\ell_n(f)-\ell_n(\phi)\) is asymptotically bounded below by a random variable \(\mathbb{L}_K\), and \(\mathbb{L}_K\to\infty\) as \(K\to\infty\). For completeness, Section~\ref{sec:divergence-unbounded} gives a formal statement of this divergence phenomenon for both the Gaussian and Poisson models. This divergence hinges crucially on the unboundedness of the location parameter space. In contrast, establishing divergence in our bounded-\(\Theta\) setting requires a different line of reasoning that explicitly exploits the structure of the null mixing distribution \(g_0\).

The key step in proving Theorem~\ref{thm:diverge} is to show that for any \(K\ge 1\) there exists a subclass \(\cG^{\le K}\subseteq \cG\) such that the corresponding likelihood ratio statistic satisfies
\[
L_n(\cG^{\le K},g_0) \stackrel{\mathcal{D}}{\longrightarrow} \chi^2(K).
\]
Since \(K\) can be taken arbitrarily large, divergence of \(L_n(\cG,g_0)\) in probability follows. The nonparametric rate for the chi-square divergence is established later through its connection with the likelihood ratio statistic (see Theorem~\ref{thm:ASY}). 

The construction of \(\cG^{\le K}\) is based on a multiplicative perturbation of \(g_0\) by a weighted sum of orthogonal polynomials, say \(\{q_k\}_{k\in\bN}\), that are associated with \(g_0\).\footnote{See Proposition~\ref{pro:ORT-multi} in the supplement for details.} Specifically, we set 
\begin{equation}\label{def:Gk}
\cG^{\le K}\coloneqq  \Big\{g\in\cG: \exists c \in \bR^K \text{ s.t. }
\dif g(\theta)=\Big(1+\sum_{k=1}^Kc_kq_k(\theta)\Big)\dif g_0(\theta)
\Big\}.    
\end{equation}
The corresponding score set is characterized in Lemma~\ref{lem:INF-multi}, where we show that it coincides with the set of normalized linear combinations of \(K\) linearly independent functions in \(L_2(f_{g_0}\dif\mu)\). This structure yields the \(\chi^2(K)\) limiting distribution.

\begin{remark}[Unbounded parameter space]
The boundedness of \(\Theta\) in Assumption~\ref{(ID)} is largely a technical convenience; one can often reduce to the bounded case via a reparameterization.\footnote{For instance, in one dimension, the transformation \(\theta \mapsto \arctan(\theta)\) maps \(\bR\) onto \((-\pi/2,\pi/2)\). This trick, however, does not circumvent the bounded-support assumption in \ref{(GP)}, since the reparameterization introduces singular points in the new parametrization, preventing Lemma~\ref{lem:NUM-multi} from going through.} 
\end{remark}

\begin{remark}[Finite-support component families]\label{rem:finite-support}
Theorem~\ref{thm:diverge} shows that the nonparametric phenomenon arises broadly for mixture models satisfying the identifiability condition \ref{(ID)}. Importantly, \ref{(ID)} can fail in settings where the component family being mixed consists of densities with only finitely many support points. As noted in \citet{liu2003asymptotics}, such models typically exhibit parametric behavior. See Section~\ref{sec:finsup} of the supplement for a result in this direction.
\end{remark}

\section{Summary and discussion}\label{sec:discussion}

This paper studies adaptivity phenomena of the NPMLE in Gaussian and Poisson mixture models under a bounded parameter space \(\Theta\). Our first main result, Theorem~\ref{thm:MAIN}, establishes a sharp dichotomy for likelihood-based estimators: when the true mixing distribution \(g_0\) is finitely discrete, any estimator whose likelihood is within a fixed tolerance of \(\ell_n(f_{g_0})\) achieves a parametric rate in chi-square divergence; in contrast, when \(g_0\) is not finitely discrete, a ``large-likelihood'' condition alone cannot certify parametric convergence. We then show that this parametric behavior extends beyond marginal density estimation to downstream tasks in the finitely discrete regime, including estimation of marginal linear functionals and empirical Bayes posterior means, as well as recovery of the mixing distribution in Wasserstein distance. Beyond estimation, we provide an inferential analogue in Theorem~\ref{thm:MAIN2}: the likelihood ratio statistic exhibits a convergence--divergence dichotomy depending on whether \(g_0\) is finitely discrete. Finally, we show that the nonparametric phenomena above arise under a fairly general identifiability condition in Theorem~\ref{thm:diverge}.

There are many interesting questions for future research. Our results on parametric rates heavily utilize the specific structure of Gaussian and Poisson distributions. It is unclear in what generality this type of parametric adaptation occurs. Extending the empirical Bayes guarantees to the compound decision setting where no true underlying $g_0$ exists is also of interest. Determining bounds on the number of support points of the NPMLE in the multivariate case is a hard open problem, any progress in this direction that would have direct implications on demixing convergence rates in the multivariate setting. Finally, characterizing the divergence rate of the likelihood ratio statistic in the case of bounded parameter spaces when $g_0$ is not finitely discrete remains an interesting open problem.

\bibliographystyle{apalike}
\bibliography{ref}

\section{Supplement} \label{sec:proof}

\subsection{Parametric behavior for distribution with a finite number of support points.} \label{sec:finsup}

Here we discuss the setting briefly mentioned in Remark~\ref{rem:finite-support}, where the component family \(\{p_\theta:\theta\in\Theta\}\) is supported on a finite sample space. This regime includes binomial and multinomial mixtures---where mixing is over success probabilities---as important special cases.

\begin{enumerate}
\renewcommand{\theenumi}{(DS)}
\renewcommand{\labelenumi}{\theenumi}
\item \label{(DS)} There exists \(K\in\bN\) such that each \(p_\theta\) is a density with respect to counting measure on \(\{1,\dots,K\}\).
\end{enumerate}

A related convergence result for the likelihood ratio statistic was proved as Theorem 3.2 in \citet{liu2003asymptotics} for more general discrete models. In contrast, we show that the completeness and continuous sample-path assumptions imposed there are not needed in our setting with a nonparametric mixing class \(\cG\).

\begin{theorem}\label{thm:converge2}
Let \(\Theta\subseteq\bR^d\) be arbitrary. Assume \ref{(DS)} and suppose that \(f_{g_0}\) has full support on \(\{1,\dots,K\}\). Then for any \(c<0\),
\[
\sup_{g\in\cG_n(c)} n\,\chi^2(f_g,f_{g_0})=O_\bP(1).
\]
Moreover, \(L_n(\cG,g_0)\) converges in distribution to a tight limit.
\end{theorem}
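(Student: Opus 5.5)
The plan is to reduce both claims to Lemma~\ref{lem:CVR}, applied with $\cF=\{f_g:g\in\cG\}$ and $f_0=f_{g_0}$. The key observation is that under \ref{(DS)} every $f_g$ is a vector in the probability simplex of $\bR^K$. Because $f_{g_0}$ has full support, $f_0(x)\ge \min_x f_0(x)=:\eta>0$. Hence for every $f\in\cF$ with $f\neq f_0$ the quantity $\chi^2(f,f_0)=\sum_{x=1}^K (f(x)-f_0(x))^2/f_0(x)$ is finite and positive, which gives \ref{(A1)}. Note that \ref{(A1)} concerns distinct densities, not distinct mixing distributions, so nonidentifiability of $g$ is harmless.

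For \ref{(A2)}, each score $s_f$ is a function on the finite set $\{1,\dots,K\}$. It satisfies $\sum_x s_f(x)^2 f_0(x)=1$, so $|s_f(x)|\le \eta^{-1/2}$. Thus $\cS$ is contained in a bounded subset of a $K$-dimensional vector space of functions. A uniformly bounded class lying in a finite-dimensional linear span is Donsker: it is VC-subgraph, or one can see this directly because $\sqrt n(\bP_n-P)s=\sum_x s(x)\sqrt n(\bP_n(\{x\})-f_0(x))$ is a bounded linear map of the multinomial empirical process, which converges in $\bR^K$. The constant $\eta^{-1/2}$ serves as a square-integrable envelope. Lemma~\ref{lem:CVR} with $c_n\equiv -c>0$ then yields $\sup_{g\in\cG_n(c)}n\chi^2(f_g,f_{g_0})=O_\bP(1)$. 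The one wrinkle is the extra element $\hat g$ in $\cG_n(c)$. It is harmless because $\ell_n(f_{\hat g})\ge \ell_n(f_{g_0})$ gives $\hat g\in\{g:\ell_n(f_g)\ge \ell_n(f_{g_0})+c\}$ whenever $c<0$.

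For the likelihood ratio statistic, I would follow the standard argument of \citet{gassiat2002likelihood} and \citet{liu2003asymptotics}. Write $f=f_0(1+t s)$ with $t=\chi(f,f_0)$ and $s\in\cS$, and apply the expansion $\log(1+u)=u-u^2/2+u^2R(u)$ with $R(u)\to0$ as $u\to0$. The parametric rate just proved gives $t=O_\bP(n^{-1/2})$ uniformly over near-maximizers, and $|ts|\le t\eta^{-1/2}$ uniformly, so the remainder is negligible. We also have $\bP_n s^2\to1$ uniformly, since $\{s^2:s\in\cS\}$ is again bounded and finite-dimensional, hence Glivenko--Cantelli. Therefore
\[
2\{\ell_n(f)-\ell_n(f_0)\}=2\sqrt n t\,\bG_n(s)-n t^2(1+o_\bP(1)).
\]
Maximizing over $t\ge0$ gives $L_n(\cG,g_0)=\sup_{s\in\cS}[(\bG_n s)_+]^2+o_\bP(1)$.

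The main obstacle is closing the argument in the last step. The upper bound follows directly from the display. For the lower bound, each direction $s$ must be approximately attainable, that is, there must be $f=f_0(1+ts)\in\cF$ with $t\asymp n^{-1/2}$. I would handle this with convexity of $\cF$: if $f_1\in\cF$ has score $s$, then $(1-\lambda)f_0+\lambda f_1\in\cF$ also has score $s$, with $\chi=\lambda\chi(f_1,f_0)$, so every small $t$ is reachable. The continuous mapping theorem then finishes the proof. Since $\bG_n\Dkonv\bG$ in $\ell^\infty(\cS)$ and $h\mapsto\sup_{s\in\cS}(h(s))_+^2$ is continuous, $L_n(\cG,g_0)$ converges in distribution to $\sup_{s\in\cS}[(\bG(s))_+]^2$. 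This limit is tight because $\bG$ has bounded sample paths on $\cS$.
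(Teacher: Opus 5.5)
Your proposal follows the paper's proof. You verify \ref{(A1)} and \ref{(A2)}: the scores are bounded by $(\min_x f_{g_0}(x))^{-1/2}$ and lie in a $K$-dimensional function space, so they form a Donsker class with constant envelope. You use convexity of $\cF$ to get \ref{(SS)}, then apply Lemma~\ref{lem:CVR} for the chi-square bound and the argument of Theorem~\ref{thm:ASY} for $L_n(\cG,g_0)$, which the paper cites rather than re-derives.

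The one step you leave implicit is passing from the per-direction bound $L_n(\cG,g_0)\ge(\bG_n(s))_+^2+o_\bP(1)$ to the supremum over $\cS$. The admissible range of $t$ along $s$ need not be bounded below uniformly in $s$, so you cannot simply plug in a random maximizing direction. Instead, as in the proof of Theorem~\ref{thm:ASY}, apply the bound on a finite net of $\cS$, and close the gap using that $s\mapsto\bG_n(s)$ is Lipschitz in sup-norm with an $O_\bP(1)$ constant.
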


Compared to Gaussian or Poisson mixtures, models satisfying \ref{(DS)} admit a much more explicit control of model complexity. Indeed, the largest model on \(\{1,\dots,K\}\) is simply the class of all discrete distributions supported on \(\{1,\dots,K\}\). For this saturated model, the likelihood ratio statistic converges to a \(\chi^2(K-1)\) distribution, which therefore provides an upper bound on the limiting distribution of \(L_n(\cG,g_0)\).

\subsection{Nonparametric behavior with unbounded parameter space}\label{sec:divergence-unbounded}

Consider the Gaussian or Poisson case. When the parameter space $\Theta$ is unbounded, the following negative result holds.

\begin{proposition}\label{pro:divergence}
Assume~\ref{(GP)} and let $d=1$. Take $g_0$ to be a point mass. If $\Theta$ is unbounded, then for any $c>0$,
\[
\sup_{g\in\cG_n(c)} n\,\chi^2\!\bigl(f_g,f_{g_0}\bigr)\to \infty
\qquad\text{in probability.}
\]
Moreover, $L_n(\cG,g_0)\to \infty$ in probability.
\end{proposition}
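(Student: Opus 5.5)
We follow Hartigan's idea and build, for every fixed \(K\), a finite-dimensional subfamily of \(\cG\) whose likelihood ratio statistic has a limit that grows without bound in \(K\). Throughout, \(g_0=\delta_{\theta_0}\) and \(\Theta\) is unbounded. We pick parameters \(\theta_1,\dots,\theta_K\in\Theta\) with \(|\theta_k-\theta_0|\) growing fast in \(k\). In the Gaussian case this is possible because \(\Theta\) is unbounded in \(\bR\). In the Poisson case \(\Theta\subseteq(0,\infty)\) is unbounded above, so we take \(\theta_k\to\infty\).

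\textbf{Step 1: the subfamily.} Consider two-point mixtures \(g_{k,t}=(1-t)\delta_{\theta_0}+t\delta_{\theta_k}\) and set \(\cF^K=\{f_{g_{k,t}}:k\le K,\,t\in[0,1]\}\). For fixed \(k\), this is a regular one-dimensional family in \(t\) at the boundary point \(t=0\). Its score at \(t=0\) is
\[
s_k=\frac{r_k-1}{\|r_k-1\|},\qquad r_k(x)=\frac{p_{\theta_k}(x)}{p_{\theta_0}(x)}.
\]
The ratio \(r_k\) is \(e^{(\theta_k-\theta_0)(x-\theta_0)-(\theta_k-\theta_0)^2/2}\) in the Gaussian case and \((\theta_k/\theta_0)^x e^{\theta_0-\theta_k}\) in the Poisson case, and both have finite \(L_2(p_{\theta_0})\) norm. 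The family \(\{s_1,\dots,s_K\}\) is finite, hence Donsker, and the envelope condition of \ref{(A2)} holds trivially. Since this score set is finite, Lemma~\ref{lem:CVR} and the standard one-sided Wilks argument apply to each fixed \(k\) (an expansion of the log-likelihood in \(t\) with the quadratic remainder controlled). This gives
\[
2\Big\{\sup_{f\in\cF^K}\ell_n(f)-\ell_n(f_{g_0})\Big\}\Dkonv \max_{k\le K}\bigl[(Z_k)_+\bigr]^2,
\]
where \(Z\) is a centered Gaussian vector with covariance \(\bE[Z_kZ_l]=\bE[s_k s_l]\).

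\textbf{Step 2: the limit diverges in \(K\).} We compute
\[
\bE[s_ks_l]=\frac{\bE[r_kr_l]-1}{\|r_k-1\|\,\|r_l-1\|}.
\]
In the Gaussian case \(\bE[r_kr_l]=e^{(\theta_k-\theta_0)(\theta_l-\theta_0)}\), so the correlation is about \(e^{ab-a^2/2-b^2/2}=e^{-(a-b)^2/2}\) with \(a=\theta_k-\theta_0\), \(b=\theta_l-\theta_0\). In the Poisson case \(\bE[r_kr_l]=e^{(\theta_k-\theta_0)(\theta_l-\theta_0)/\theta_0}\), giving the analogous bound. Choosing the \(\theta_k\) sparse enough makes all pairwise correlations at most \(\eps_K\to0\). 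For nearly independent standard Gaussians, a comparison argument (Slepian/Sudakov–Fernique, or simply a small-correlation perturbation) gives \(\max_{k\le K}(Z_k)_+\ge\sqrt{\log K}\) with probability tending to one. Hence for every \(M\) we can choose \(K\) with \(\bP(\max_k (Z_k)_+^2>M)>1-\eps\). Since \(\cF^K\subseteq\cF\), it follows that \(\liminf_n\bP(L_n(\cG,g_0)>M)\ge 1-\eps\), so \(L_n(\cG,g_0)\to\infty\) in probability.

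\textbf{Step 3: the \(\chi^2\) statement.} Take \(c>0\) and fix \(K\) large. On the event that the restricted statistic exceeds \(M\gg c\), there is a maximizer \(g^*=g_{k,t^*}\) over \(\cF^K\). Along the segment \(t\mapsto g_{k,t}\), \(t\in[0,t^*]\), the log-likelihood is concave in \(t\), so there is \(g\) on the segment with \(\ell_n(f_g)-\ell_n(f_{g_0})\) exactly equal to \(c\); this \(g\) lies in \(\cG_n(c)\). The quadratic expansion gives \(\ell_n(f_{g_{k,t}})-\ell_n(f_{g_0})\approx \sqrt n\,\tau Z_k-n\tau^2/2\) with \(\tau=t\|r_k-1\|=\chi(f_{g_{k,t}},f_{g_0})\). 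If \(n\tau^2\) stayed bounded by \(B\), then \(\sqrt n\,\tau Z_k\) would have to be close to \(c+n\tau^2/2\). Instead we pick the point \(g\) on the segment with likelihood gap at least \(c\) and \(n\tau^2\) as large as possible: solving the quadratic gives \(\sqrt n\,\tau\approx Z_k+\sqrt{Z_k^2-2c}\), which exceeds \(Z_k\). Thus \(n\chi^2(f_g,f_{g_0})\ge Z_k^2\ge M\) with high probability, and letting \(K,M\to\infty\) yields the first claim.

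\textbf{Main obstacle.} The main difficulty is making the expansion in Step 1 rigorous uniformly over the \(K\) components. The remainder term \(\sum_i (t s_k(X_i)\|r_k-1\|)^3\) is the issue in the Poisson case, where \(r_k\) has heavy tails when \(\theta_k\gg\theta_0\). We would first try the standard bound \(\log(1+u)\ge u-u^2/2\cdot(1+o(1))\) on the event \(\max_i|u_i|\to0\). This event holds because \(\max_i|s_k(X_i)|=o_\bP(\sqrt n)\) for any fixed square-integrable \(s_k\), and only finitely many \(k\) are involved.
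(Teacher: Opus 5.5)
Your proposal is correct and follows the paper's proof: Hartigan's two-point subfamily $\cF^K$ with a finite score set, the one-sided Wilks limit $\max_k[(Z_k)_+]^2$ (which is exactly Theorem~\ref{thm:ASY} applied to $(\cF^K,f_{g_0})$), and well-separated $\theta_k$ that make the correlations $\bE[s_ks_l]$ small, so the maximum diverges as $K\to\infty$; your use of Slepian's comparison is only a minor variant of the paper's small-correlation continuity argument. Step~3 can be shortened: the paper takes the restricted approximate MLE $\hat f_K$ itself, which lies in $\cG_n(c)$ once the statistic exceeds $2c$ and satisfies $n\chi^2(\hat f_K,f_{g_0})=2\{\ell_n(\hat f_K)-\ell_n(f_{g_0})\}+o_\bP(1)$ by the first identity in Theorem~\ref{thm:ASY}, so neither the far endpoint of the superlevel set nor the point on $[0,t^*]$ is needed (the latter is the small root, as you noticed when you switched away from it).
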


In the Gaussian case, the divergence of the likelihood ratio statistic was first observed by \citet{hartigan1985failure}. The proposition above directly extends their finding to the Poisson setting and, moreover, to the divergence of the normalized chi-square distance.

\subsection{General asymptotic theory for star-shaped models}

In this section, we present a general result under a ``star-shaped'' assumption. It simplifies some prior works in this particular setting and is a core ingredient in the proofs for the results in the previous sections. 

We consider the abstract setup in Section~\ref{sec:parametric-lemma}. Besides (A1) and (A2), we impose one more assumption on the pair $(\mathcal{F},f_0)$: 
\begin{enumerate} \renewcommand{\theenumi}{(SS)}
\renewcommand{\labelenumi}{\theenumi}
\item \label{(SS)} For any $f\in\cF$, the convex combination $(1-t)f_0+tf$ remains in $\cF$ for all $t\in[0,1]$. 
\end{enumerate}

Assumptions like \ref{(A1)} and \ref{(A2)} were previously imposed by \citet{gassiat2002likelihood}, \citet{liu2003asymptotics} and \citet{azais2009likelihood} who studied the behavior of the likelihood ratio test under very general conditions. Compared to those works, our assumptions are stronger in that we require a certain star-shaped structure of $\cF$ in \ref{(SS)}. This assumption is satisfied in mixture models with nonparametric mixing distributions, but fails in many other examples such as finite mixtures. \ref{(SS)} is thus tailored to our specific setting.   

The following theorem is the main result of this section. It shows that the asymptotic behavior of both the normalized chi-square divergence and the likelihood ratio statistic is governed by a Gaussian process indexed by the score set. The statement is reminiscent of Theorem 3.1 in \citet{liu2003asymptotics}, which treats only the likelihood ratio statistic. Here, by imposing \ref{(SS)}, we can dispense with the completeness and continuous sample-path assumptions required in that work. 

\begin{theorem}\label{thm:ASY}
Under \ref{(A1)}, \ref{(A2)}, and \ref{(SS)}, we have
\begin{equation}\label{eq:ASY}
\begin{aligned}
n\,\chi^2(\hat f,f_0)
&=2\big\{\ell_n(\hat f)-\ell_n(f_0)\big\}+o_{\bP}(1)\\
&=\sup_{s\in\cS}\bigl[(\bG_n(s))_+\bigr]^2+o_{\bP}(1),
\end{aligned}
\end{equation}
where \(\bG_n\) is the empirical process
\[
\bG_n(s)\coloneqq\sqrt{n}\Big(\frac{1}{n}\sum_{i=1}^n s(X_i)-\bE[s(X_1)]\Big),
\]
and \(\hat f\) is any (approximate) MLE satisfying
\[
\sup_{f\in\cF}\ell_n(f)-\ell_n(\hat f)=o_{\bP}(1).
\]
\end{theorem}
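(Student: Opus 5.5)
We will follow the classical likelihood-ratio argument for models with a Donsker score set, and use (SS) to get a matching lower bound.

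\textbf{Step 1: reduce to a quadratic form.} By Lemma~\ref{lem:CVR}, with $c_n=\sup_f\ell_n(f)-\ell_n(\hat f)=o_\bP(1)$, we have $n\chi^2(\hat f,f_0)=O_\bP(1)$. Write $\hat f/f_0=1+\hat t\,\hat s$ with $\hat t=\chi(\hat f,f_0)=O_\bP(n^{-1/2})$ and $\hat s=s_{\hat f}\in\cS$.

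Consider the log-likelihood ratio
\[
\ell_n(\hat f)-\ell_n(f_0)=\sum_i\log(1+\hat t\hat s(X_i)).
\]
We use the elementary inequality $\log(1+u)\le u-u^2/2+u^3/3$, valid for $u>-1$. Combined with the square-integrable envelope $F$ of $\cS$, this gives $\max_i|F(X_i)|=o_\bP(\sqrt n)$. By the Glivenko--Cantelli property of $\cS^2$ (which follows from Donsker plus the envelope), we also get $n^{-1}\sum_i\hat s(X_i)^2=1+o_\bP(1)$. Together these yield the upper bound
\[
2\{\ell_n(\hat f)-\ell_n(f_0)\}\le 2\sqrt n\,\hat t\,\bG_n(\hat s)-n\hat t^2+o_\bP(1),
\]
using $\bE s=0$. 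Maximizing the right-hand side over $\tau=\sqrt n\hat t\ge 0$ gives at most $\sup_{s}[(\bG_n(s))_+]^2+o_\bP(1)$.

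\textbf{Step 2: matching lower bound via (SS).} Fix $s\in\cS$, say $s=s_f$. By (SS), the path $f_t=(1-t')f_0+t'f$ lies in $\cF$, and $f_t/f_0=1+t s$ with $t=t'\chi(f,f_0)$ ranging over $[0,\chi(f,f_0)]$.

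Choose $t=\tau/\sqrt n$ with $\tau=(\bG_n(s))_+\wedge M$. For large $n$ this $t$ is admissible whenever $\chi(f,f_0)>0$. For the uniform version, take a near-maximizing $s_n$, restricting attention to $t\le\chi(f,f_0)$, and argue separately for scores of $f$ with tiny $\chi$.

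Now Taylor-expand using $\log(1+u)\ge u-u^2/2-|u|^3$ for $u\ge-1/2$; the condition $u\ge-1/2$ holds because $\max_i t|F(X_i)|=o_\bP(1)$. This gives
\[
2\{\ell_n(f_t)-\ell_n(f_0)\}\ge 2\tau\bG_n(s)-\tau^2+o_\bP(1)=\tau^2+o_\bP(1).
\]
Hence, since $\sup_f\ell_n(f)-\ell_n(\hat f)=o_\bP(1)$, we get $2\{\ell_n(\hat f)-\ell_n(f_0)\}\ge\sup_s[(\bG_n(s))_+\wedge M]^2-o_\bP(1)$. Letting $M\to\infty$ and using tightness of $\sup_s|\bG_n(s)|$ (Donsker) gives the second equality in \eqref{eq:ASY}.

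\textbf{Step 3: identify $n\chi^2(\hat f,f_0)$.} Combining Steps 1 and 2, with $\tau=\sqrt n\hat t$ and $Z=\sup_s(\bG_n(s))_+^2$, we obtain
\[
Z-o_\bP(1)\le 2\tau\bG_n(\hat s)-\tau^2+o_\bP(1)\le 2\tau\sqrt Z-\tau^2+o_\bP(1).
\]
Rearranging, $(\tau-\sqrt Z)^2\le o_\bP(1)$, so $\tau^2=Z+o_\bP(1)$ because $\tau$ and $Z$ are tight. This gives $n\chi^2(\hat f,f_0)=Z+o_\bP(1)$, which is the first equality.

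\textbf{Main obstacle.} The delicate point is making the lower bound in Step 2 uniform over $s$. Scores $s_f$ with $\chi(f,f_0)$ smaller than $\tau/\sqrt n$ cannot be reached along their own segment. I would handle this by noting that any $s\in\cS$ is still attained with an arbitrarily small step: $s$ is fixed, and only the endpoint $f$ constrains $t$. For the finitely many near-maximizers needed, I would choose $s$ from a finite $\epsilon$-net of $\cS$ in $L_2(f_0)$, which exists by total boundedness of the Donsker class. Asymptotic equicontinuity of $\bG_n$ then transfers the supremum over the net to the supremum over $\cS$, and for fixed $f$ the constraint $t\le\chi(f,f_0)$ is eventually satisfied.
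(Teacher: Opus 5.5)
Your proposal is correct and follows essentially the same route as the paper: the upper bound via Lemma~\ref{lem:CVR}, the envelope bound $\max_i F(X_i)=o_\bP(\sqrt n)$ and the Glivenko--Cantelli property of $\cS^2$; the lower bound via the (SS) segment at a fixed score (the paper's Lemma~\ref{lem:LOW}), a finite net of $\cS$ and asymptotic equicontinuity; and the first equality by completing the square. The differences are cosmetic: you use explicit one-sided cubic bounds on $\log(1+u)$ where the paper uses a remainder $R$, and your truncation $\tau\wedge M$ is harmless but unnecessary since $(\bG_n(s))_+=O_\bP(1)$ for fixed $s$ (with the truncation, that step should read $2\tau\bG_n(s)-\tau^2\ge\tau^2$ rather than an equality).
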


A full proof is given in Section \ref{sec:proofASY}. The proof for the second equality in \eqref{eq:ASY} follows the general line of reasoning in \citet{gassiat2002likelihood,liu2003asymptotics,azais2009likelihood}; our use of \ref{(SS)} allows us to avoid more delicate discussions involving differentiability in quadratic mean and related conditions. By contrast, the first equality appears to be new: it provides a direct link between likelihood ratio theory and chi-square risk in marginal density estimation. This identity is a key ingredient in establishing the divergence statement in Theorem~\ref{thm:MAIN}.

\subsection{Details on the score set in Theorem~\ref{thm:MAIN2}} \label{sec:multS}

We now describe the score set \(\cS\). Fix a support point \(\theta_0\) of \(g_0\). Its characterization will rely on moment tensors \(\{m_{k,g}\}_{k\in\bN}\), which generalize univariate moments. For \(\theta\in\bR^d\), the tensor \(\theta^{\otimes k}\in(\bR^d)^{\otimes k}\) is the \(k\)-way array with entries
\[
(\theta^{\otimes k})_{i_1,\dots,i_k}\;=\;\prod_{j=1}^k \theta_{i_j}.
\]
With this notation, for \(g\in\cG\) we define
\[
m_{k,g}\coloneqq\int_\Theta (\theta-\theta_0)^{\otimes k}\,\dif g(\theta).
\]

Next, define the orthogonal polynomial family associated with \(p_{\theta_0}\) by
\begin{equation}\label{eq:polynomial}
q_\alpha(x)\coloneqq
\left.\frac{\partial^\alpha}{\partial\theta^\alpha}\Big(\frac{p_\theta(x)}{p_{\theta_0}(x)}\Big)\right|_{\theta=\theta_0}
=
\prod_{l=1}^d
\left.
\frac{\partial^{\alpha_l}}{\partial\theta_l^{\alpha_l}}
\Big(\frac{p_{\theta_l}(x_l)}{p_{\theta_{0,l}}(x_l)}\Big)
\right|_{\theta_l=\theta_{0,l}},
\qquad \alpha\in\bN^d.
\end{equation}
Each \(q_\alpha\) is a product of (probabilists') Hermite polynomials and Poisson--Charlier polynomials, and the collection \(\{q_\alpha\}_{\alpha\in\bN^d}\) is orthogonal in \(L_2(p_{\theta_0}\,\dif\mu)\).

With this notation, the score set \(\cS\) takes the form
\begin{equation}\label{eq:defS-multi}
\cS\coloneqq\Bigg\{
s(\cdot)=\sqrt{\frac{p_{\theta_0}(\cdot)}{f_{g_0}(\cdot)}}
\left(
\sum_{k=1}^\infty\ \sum_{|\alpha|=k}
\frac{m_{\alpha,g}-m_{\alpha,g_0}}{\alpha!\,\chi(f_g,f_{g_0})}\,
q_\alpha(\cdot)
\right)
\sqrt{\frac{p_{\theta_0}(\cdot)}{f_{g_0}(\cdot)}}
:\ g\in\cG\backslash g_0
\Bigg\}.
\end{equation}
Here, for a multi-index \(\alpha\in\bN^d\) and \(\theta\in\bR^d\), we write
\[
\theta^\alpha\coloneqq \prod_{l=1}^d \theta_l^{\alpha_l}, 
\qquad 
\alpha!\coloneqq \prod_{l=1}^d \alpha_l!, 
\qquad 
|\alpha|\coloneqq \sum_{l=1}^d \alpha_l,
\]
and define the corresponding scalar moments
\[
m_{\alpha,g}\coloneqq\int_\Theta (\theta-\theta_0)^\alpha\,g(\dif\theta).
\]
Note that \(m_{\alpha,g}\in\bR\) is an entry of the tensor \(m_{|\alpha|,g}\in(\bR^d)^{\otimes|\alpha|}\).

\subsection{Proofs of the main results under \ref{(GP)}}

For an order-\(k\) tensor \(T\in(\bR^d)^{\otimes k}\), let \(\|T\|_\infty\) denote its maximum absolute entry and \(\|T\|_F\) its Frobenius norm (the square root of the sum of squared entries). We also define the spectral norm by
\[
\|T\|_2 \coloneqq \sup_{c_1,\dots,c_k \in \mathbb{S}^{d-1}} \langle T, c_1 \otimes \cdots \otimes c_k\rangle,
\]
where \(\bS^{d-1}\) denotes the unit sphere in \(\bR^d\), and, for two tensors \(T,T'\), \(\langle T,T'\rangle\) denotes the inner product of their vectorized versions. These norms satisfy
\[
\|T\|_\infty \;\le\; \|T\|_2 \;\le\; \|T\|_F \;\le\; d^{k/2}\|T\|_\infty.
\]

A tensor \(T\) is symmetric if
\[
T_{j_1,\dots,j_k} = T_{j_{\pi(1)},\dots,j_{\pi(k)}}
\quad \text{for all } j_1,\dots,j_k \in [d] \text{ and all permutations \(\pi\) of }[k].
\]
In particular, the moment tensors introduced in Section~\ref{sec:multS} are symmetric. For symmetric tensors, a classical result due to Banach \citep{banach1938homogene,friedland2018nuclear} yields the sharper characterization
\begin{equation}\label{eq:SPE}
\|T\|_2 = \sup_{c \in \mathbb{S}^{d-1}} \bigl|\langle T, c^{\otimes k}\rangle\bigr|.
\end{equation}

Throughout this section, we adopt the notation from Section~\ref{sec:multS} and additionally define
\[
M \coloneqq \sup_{\theta \in \Theta} \|\theta - \theta_0\|,
\]
where \(\|\cdot\|\) denotes the Euclidean norm on \(\bR^d\). We assume \ref{(GP)}. Our main goal is to prove Theorem~\ref{thm:MAIN} and Theorem~\ref{thm:MAIN2}. The positive parts of both results hinge on controlling the size of the score set introduced in Section~\ref{sec:parametric-lemma}. The next few propositions and lemmas collect structural identities and bounds that will be used for this purpose.

By Theorem 4 and Corollary 1 of \citet{morris1982natural}, we have the following.

\begin{proposition}\label{pro:polynomial}
Recall the definition of \(q_\alpha\) in~\eqref{eq:polynomial}. For any \(\alpha,\alpha'\in\bN^d\),
\begin{enumerate}[(i)]
\item \(\displaystyle \int q_{\alpha}(x)\,q_{\alpha'}(x)\, p_{\theta_0}(x)\,\dif \mu(x)
= a_\alpha \,\alpha!\,\mathbf{1}_{\{\alpha=\alpha'\}}.\)
\item \(\displaystyle \int q_\alpha(x)\, p_{\theta}(x)\,\dif \mu(x)
= a_\alpha\,(\theta-\theta_0)^\alpha.\)
\end{enumerate}
Here,
\[
a_\alpha\coloneqq\prod_{l=1}^d V(\theta_{0,l})^{-\alpha_l},
\qquad 
V(\theta_{0,l})\coloneqq
\begin{cases}
1, & \text{if the \(l\)th marginal is Gaussian},\\
\theta_{0,l}, & \text{if the \(l\)th marginal is Poisson}.
\end{cases}
\]
\end{proposition}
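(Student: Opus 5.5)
Since $p_\theta$, $q_\alpha$, $a_\alpha$ and $\alpha!$ all factor over coordinates, and $\mu$ is a product measure, both integrals in (i) and (ii) split into products of one-dimensional integrals. So it suffices to prove the statement for $d=1$ in the two cases: Gaussian with $V\equiv 1$, and Poisson with $V(\theta_0)=\theta_0$. Throughout, write $q_k(x)=\partial_\theta^k\bigl(p_\theta(x)/p_{\theta_0}(x)\bigr)|_{\theta=\theta_0}$.

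The key device is a generating function. Put $t=\theta-\theta_0$ and
\[
R_t(x):=\frac{p_{\theta_0+t}(x)}{p_{\theta_0}(x)}.
\]
\emph{Gaussian case:} $R_t(x)=\exp\bigl(t(x-\theta_0)-t^2/2\bigr)$, the Hermite generating function, so $q_k(x)=H_k(x-\theta_0)$.
\emph{Poisson case:} $R_t(x)=(1+t/\theta_0)^x e^{-t}$, the Charlier generating function.
In both cases $R_t(x)$ is entire in $t$ for each $x$, and by Taylor's theorem
\[
R_t(x)=\sum_{k\ge0}\frac{t^k}{k!}\,q_k(x).
\]

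For (i), compute $E(s,t):=\int R_s R_t\, p_{\theta_0}\,\dif\mu$ in closed form.
\begin{itemize}
\item Gaussian: $E(s,t)=\mathbb{E}\exp\bigl((s+t)Z-(s^2+t^2)/2\bigr)=e^{st}$ with $Z\sim N(0,1)$.
\item Poisson: $E(s,t)=\mathbb{E}\bigl[\bigl((1+s/\theta_0)(1+t/\theta_0)\bigr)^X\bigr]e^{-s-t}$ with $X\sim\mathrm{Poi}(\theta_0)$. The probability generating function gives $\exp\bigl(\theta_0(1+s/\theta_0)(1+t/\theta_0)-\theta_0-s-t\bigr)=e^{st/\theta_0}$.
\end{itemize}
So in general $E(s,t)=e^{st/V(\theta_0)}=\sum_k \frac{(st)^k}{k!}V^{-k}$.

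Now expand both $R_s$ and $R_t$ in their series, integrate termwise, and compare coefficients of $s^k t^{k'}$. This gives
\[
\int q_kq_{k'}\,p_{\theta_0}\,\dif\mu = k!\,V^{-k}\,\ind_{\{k=k'\}}\cdot\frac{k!\,k'!}{k!\,k!}=a_k\,k!\,\ind_{\{k=k'\}}.
\]

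For (ii), note that $\int R_s\, p_{\theta_0+t}\,\dif\mu=\int R_sR_t\,p_{\theta_0}\,\dif\mu=E(s,t)$. Expanding in $s$ alone and matching coefficients of $s^k$ yields
\[
\int q_k\, p_\theta\,\dif\mu=V^{-k}t^k=a_k(\theta-\theta_0)^k.
\]
This is the one-dimensional version of (ii).

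The one genuine obstacle is justifying termwise integration, i.e.\ interchanging sum and integral. The plan is to use dominated convergence via
\[
\sum_k \frac{|s|^k}{k!}\,|q_k(x)|\le \tilde R_{|s|}(x),
\]
where $\tilde R_{|s|}(x)$ is an explicit dominating function with finite $p_{\theta_0}$- and $p_\theta$-moments.
\begin{itemize}
\item Gaussian: use the bound $|H_k(y)|\le \sqrt{k!}\,e^{y^2/4}$ together with Cauchy's estimates. Alternatively, observe that $R_t$ is analytic in a complex disc and bound $q_k$ by $k!\,r^{-k}\sup_{|t|=r}|R_t(x)|$; the majorant $e^{r|x-\theta_0|+r^2/2}$ is integrable against Gaussians.
\item Poisson: the same Cauchy estimate gives the majorant $(1+r/\theta_0)^x e^{r}$, which has finite Poisson moments.
\end{itemize}
Taking $r>|s|,|t|$ makes the domination legitimate. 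Equivalently, one can cite Theorem 4 and Corollary 1 of \citet{morris1982natural}, since Gaussian and Poisson are natural exponential families with quadratic variance function.
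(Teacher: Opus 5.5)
Your proof is correct, but it takes a different route from the paper. The paper gives no computation: it cites Theorem~4 and Corollary~1 of Morris (1982), the general orthogonal-polynomial theory for natural exponential families with quadratic variance function. Gaussian and Poisson, parametrized by their means, are the cases with no quadratic term in the variance function, and the $d$-dimensional statement follows from the product structure.

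Your proof instead reduces to $d=1$ by tensorization. It then extracts both parts from the single closed-form identity $\int R_sR_t\,p_{\theta_0}\,\dif\mu=e^{st/V(\theta_0)}$:
\begin{itemize}
\item (i) follows by comparing coefficients of $s^kt^{k'}$;
\item (ii) follows because $R_t\,p_{\theta_0}=p_{\theta_0+t}$, so the same identity expanded in $s$ alone gives $\int q_k\,p_\theta\,\dif\mu=V^{-k}(\theta-\theta_0)^k$.
\end{itemize}
Your route buys self-containment and transparency: the constants $a_\alpha\alpha!$ and $a_\alpha$ appear visibly as Taylor coefficients of $e^{st/V}$. The citation buys brevity and generality, since Morris's result covers all NEF--QVF families at once. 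In those families the normalizing constants acquire factors depending on the quadratic coefficient of the variance function.

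One point in your domination step needs tightening. The Cram\'er-type bound $|H_k(y)|\le C\sqrt{k!}\,e^{y^2/4}$ gives a majorant proportional to $e^{y^2/4}$. That majorant is $\phi$-integrable but not in $L_2(\phi)$, so by itself it cannot justify termwise integration of the product $R_sR_t$ in (i): you would face $\int e^{y^2/2}\phi(y)\,\dif y=\infty$.

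The Cauchy-estimate majorants do work: $\frac{r}{r-|s|}e^{r|x-\theta_0|+r^2/2}$ in the Gaussian case and $\frac{r}{r-|s|}(1+r/\theta_0)^x e^{r}$ in the Poisson case. Both lie in $L_2(p_{\theta_0}\,\dif\mu)$ and in $L_1(p_\theta\,\dif\mu)$. Since $r$ is arbitrary, the double series in $(s,t)$ converges absolutely everywhere, so the coefficient comparison is legitimate.

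The stray factor $k!\,k'!/(k!\,k!)$ in your display for (i) is harmless, since it equals $1$ when $k=k'$.
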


As noted by \citet{azais2009likelihood}, the connection between the numerator of the score functions~\eqref{eq:SCO} and the moments \(\{m_{k,g}\}_{k\in\bN}\) can be obtained via a Taylor expansion.

\begin{lemma}\label{lem:NUM-multi}
Assume \ref{(GP)}. For any \(g\in\cG\),
\[
\frac{f_g(x)}{p_{\theta_0}(x)}-1
=\sum_{k=1}^\infty\sum_{|\alpha|=k}\frac{m_{\alpha,g}}{\alpha!}\,q_\alpha(x),
\]
and the series converges absolutely for any \(x\in\mathrm{supp}(f_{g_0})\).
\end{lemma}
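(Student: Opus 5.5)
Because of the product structure in \ref{(GP)}, the ratio factorizes as \(p_\theta(x)/p_{\theta_0}(x)=\prod_{l=1}^d r_l(\theta_l;x_l)\) with \(r_l(\theta_l;x_l)\coloneqq p_{\theta_l}(x_l)/p_{\theta_{0,l}}(x_l)\). We treat each factor as an entire function of \(\theta_l\in\mathbb{C}\).

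In the Gaussian coordinates,
\[
r_l(\theta_l;x_l)=\exp\bigl((\theta_l-\theta_{0,l})(x_l-\theta_{0,l})-(\theta_l-\theta_{0,l})^2/2\bigr).
\]
In the Poisson coordinates,
\[
r_l(\theta_l;x_l)=(\theta_l/\theta_{0,l})^{x_l}e^{-(\theta_l-\theta_{0,l})}.
\]
Since \(x_l\in\bN\), this is a polynomial times an exponential, so it is entire in \(\theta_l\). Consequently \(\theta\mapsto p_\theta(x)/p_{\theta_0}(x)\) is entire on \(\mathbb{C}^d\) for each fixed \(x\). Its Taylor expansion about \(\theta_0\) is
\[
\frac{p_\theta(x)}{p_{\theta_0}(x)}=\sum_{\alpha\in\bN^d}\frac{q_\alpha(x)}{\alpha!}(\theta-\theta_0)^\alpha ,
\]
using definition \eqref{eq:polynomial} of \(q_\alpha\). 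This multivariate power series converges absolutely for every \(\theta\in\bR^d\) (indeed on all of \(\mathbb{C}^d\)).

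To integrate against \(g\), we need a \(\theta\)-uniform summable majorant on the bounded set \(\Theta\), with \(\|\theta-\theta_0\|\le M\). By Cauchy's estimates on the polydisc of radii \(R\) around \(\theta_0\), for any fixed \(R>M\),
\[
|q_\alpha(x)|/\alpha!\le C_R(x)R^{-|\alpha|},
\qquad
C_R(x)=\sup_{|z_l-\theta_{0,l}|=R}\prod_l|r_l(z_l;x_l)|<\infty .
\]
Alternatively, explicit formulas work too: Gaussian factors are \(H_{\alpha_l}(x_l-\theta_{0,l})\), and Poisson factors are Charlier polynomials \(\theta_{0,l}^{-\alpha_l}C_{\alpha_l}(x_l;\theta_{0,l})\), with \(|C_k(x;\lambda)|\) bounded by \(k!\) times a polynomial in \(x\) to a geometric factor. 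Either way,
\[
\sum_{\alpha}\frac{|q_\alpha(x)|}{\alpha!}\sup_{\theta\in\Theta}|(\theta-\theta_0)^\alpha|\le C_R(x)\sum_\alpha (M/R)^{|\alpha|}<\infty,
\]
because \(|(\theta-\theta_0)^\alpha|\le M^{|\alpha|}\).

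With this majorant, Fubini--Tonelli lets us integrate the series term by term against \(g\). Term by term we get
\[
\frac{f_g(x)}{p_{\theta_0}(x)}=\sum_\alpha \frac{m_{\alpha,g}}{\alpha!}q_\alpha(x),
\]
and the series converges absolutely, since \(|m_{\alpha,g}|\le M^{|\alpha|}\). The \(\alpha=0\) term is \(q_0=1\), because \(m_{0,g}=1\). Subtracting it and grouping the remaining terms by \(k=|\alpha|\) gives the claimed identity. Only points \(x\) with \(p_{\theta_0}(x)>0\) matter. That holds for every \(x\) in the Gaussian coordinates and for \(x_l\in\bN\) in the Poisson coordinates. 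It therefore covers all of \(\mathrm{supp}(f_{g_0})\), because \(\theta_{0,l}>0\).

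The main obstacle is the uniform domination step for the Poisson factors when \(\Theta\) approaches \(0\) in a Poisson coordinate. This turns out not to be a problem: \(r_l\) is a polynomial in \(\theta_l\) times an exponential, hence entire. So Cauchy's estimate with any \(R>M\) works, and no constraint like \(R<\theta_{0,l}\) is needed. I would check this carefully and then cite the Cauchy bound rather than grind through the explicit Hermite/Charlier estimates.
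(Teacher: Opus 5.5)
Your proposal is correct and follows essentially the same route as the paper. Both arguments use entireness of $\theta\mapsto p_\theta(x)/p_{\theta_0}(x)$ from the product structure, a Taylor expansion about $\theta_0$, a $\theta$-uniform summable majorant $\sum_\alpha M^{|\alpha|}|q_\alpha(x)|/\alpha!$ on the bounded $\Theta$, and Fubini to integrate term by term. The only cosmetic difference is how you get the majorant: you use Cauchy estimates on a polydisc of radius $R>M$, whereas the paper reads it off directly from absolute convergence of the Taylor series at the point $\theta_0+M(1,\dots,1)$.
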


\begin{proof}[Proof of Lemma~\ref{lem:NUM-multi}]
Under \ref{(GP)}, the map \(\theta\mapsto p_\theta(x)\) is a product of entire functions (each depending on a distinct coordinate of \(\theta\)). Hence \(\theta\mapsto p_\theta(x)/p_{\theta_0}(x)\) is entire, and its multivariate Taylor expansion around \(\theta_0\) takes the form
\[
\frac{p_\theta(x)}{p_{\theta_0}(x)}-1
=\sum_{k=1}^\infty\sum_{|\alpha|=k}\frac{(\theta-\theta_0)^\alpha}{\alpha!}\,q_\alpha(x),
\]
with absolute convergence for every \(\theta\in\bR^b\times(0,\infty)^{d-b}\); see Theorem 1.2.5 and Corollary 2.3.7 of \citet{krantz2001function}.
Moreover, for each multi-index \(\alpha\),
\[
\left|\frac{(\theta-\theta_0)^\alpha}{\alpha!}q_\alpha(x)\right|
\le \frac{M^{|\alpha|}}{\alpha!}\,|q_\alpha(x)|.
\]
The sum $\sum_{k=1}^\infty\sum_{|\alpha|=k} \tfrac{M^k}{\alpha!}|q_\alpha(x)|$ converges since the Taylor series converges absolutely at $\theta=\theta_0+M$. 
This provides an integrable dominating function in \(\theta\), allowing us to apply Fubini's theorem to interchange summation and integration. Consequently,
\[
\begin{aligned}
\frac{f_g(x)}{p_{\theta_0}(x)}-1
&=\int\left(\frac{p_\theta(x)}{p_{\theta_0}(x)}-1\right)\,\dif g(\theta)\\
&=\int\left(\sum_{k=1}^\infty\sum_{|\alpha|=k}\frac{(\theta-\theta_0)^\alpha}{\alpha!}\,q_\alpha(x)\right)\,\dif g(\theta)\\
&=\sum_{k=1}^\infty\sum_{|\alpha|=k}\frac{m_{\alpha,g}}{\alpha!}\,q_\alpha(x),
\end{aligned}
\]
as claimed.
\end{proof}

The next lemma relates the denominators of the score functions~\eqref{eq:SCO} to moment differences. A closely related bound for multivariate Gaussian location mixtures was obtained in Theorem 9 of \citet{bandeira2020optimal}.

\begin{lemma}\label{lem:DEN-multi}
Under \ref{(GP)}, for any \(g\in\cG\),
\begin{multline*}
\sup_{k\in\bN}\left(\min_{|\alpha|=k}\frac{a_\alpha^2}{\int q_\alpha^2(x)f_{g_0}(x)\,\dif\mu(x)}\right)\|m_{k,g}-m_{k,g_0}\|_\infty^2
\\
\le \chi^2(f_g,f_{g_0})
\le C_0\sum_{k=1}^\infty\left(\sup_{|\alpha|=k}a_\alpha\right)\frac{\|m_{k,g}-m_{k,g_0}\|_F^2}{k!},   
\end{multline*}
where
\[
C_0\coloneqq\sup_{x\in\mathrm{supp}(f_{g_0})}\frac{p_{\theta_0}(x)}{f_{g_0}(x)}<\infty.
\]
\end{lemma}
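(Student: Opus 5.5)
The plan is to prove the two bounds separately, both starting from the Taylor representation in Lemma~\ref{lem:NUM-multi}. Write $r(x)\coloneqq f_g(x)/f_{g_0}(x)-1$, so that $\chi^2(f_g,f_{g_0})=\int r^2 f_{g_0}\,\dif\mu$.

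\emph{Lower bound.} Fix a multi-index $\alpha$ with $|\alpha|=k$. Proposition~\ref{pro:polynomial}(ii) and Fubini give
\[
\int q_\alpha (f_g-f_{g_0})\,\dif\mu = a_\alpha\,(m_{\alpha,g}-m_{\alpha,g_0}).
\]
The interchange is justified because $\Theta$ is bounded and $\int |q_\alpha| p_\theta\,\dif\mu$ is bounded uniformly in $\theta\in\Theta$, since $q_\alpha$ is a polynomial. The left-hand side equals $\int q_\alpha\, r\, f_{g_0}\,\dif\mu$. Cauchy--Schwarz in $L_2(f_{g_0}\dif\mu)$ then yields
\[
a_\alpha^2\,(m_{\alpha,g}-m_{\alpha,g_0})^2\le \Big(\int q_\alpha^2 f_{g_0}\,\dif\mu\Big)\,\chi^2(f_g,f_{g_0}).
\]
Dividing by $\int q_\alpha^2 f_{g_0}\,\dif\mu$ and taking the $\alpha$ that attains $\|m_{k,g}-m_{k,g_0}\|_\infty$, the coefficient on the left is at least $\min_{|\alpha|=k} a_\alpha^2/\int q_\alpha^2 f_{g_0}\,\dif\mu$. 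Taking the supremum over $k$ gives the lower bound. This step is routine once finiteness of $\int q_\alpha^2 f_{g_0}\,\dif\mu$ is noted; that holds because $f_{g_0}$ is a mixture over a bounded set of Gaussian and Poisson kernels, which have all polynomial moments.

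\emph{Upper bound.} Write
\[
\chi^2=\int \frac{(f_g-f_{g_0})^2}{f_{g_0}}\,\dif\mu=\int \Big(\frac{f_g-f_{g_0}}{p_{\theta_0}}\Big)^2\frac{p_{\theta_0}}{f_{g_0}}\,p_{\theta_0}\,\dif\mu\le C_0\int \Big(\frac{f_g-f_{g_0}}{p_{\theta_0}}\Big)^2 p_{\theta_0}\,\dif\mu.
\]
By Lemma~\ref{lem:NUM-multi} applied to both $g$ and $g_0$,
\[
\frac{f_g-f_{g_0}}{p_{\theta_0}}=\sum_{\alpha\ne0}\frac{m_{\alpha,g}-m_{\alpha,g_0}}{\alpha!}\,q_\alpha .
\]
Using orthogonality from Proposition~\ref{pro:polynomial}(i), the $L_2(p_{\theta_0})$ norm squared equals
\[
\sum_\alpha a_\alpha\,\frac{(m_{\alpha,g}-m_{\alpha,g_0})^2}{\alpha!}.
\]
To reach the stated Frobenius form, note that the tensor entry indexed by $\alpha$ appears $k!/\alpha!$ times in $m_{k,g}$. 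Hence
\[
\sum_{|\alpha|=k}\frac{(m_{\alpha,g}-m_{\alpha,g_0})^2}{\alpha!}=\frac{\|m_{k,g}-m_{k,g_0}\|_F^2}{k!},
\]
and bounding $a_\alpha$ by $\sup_{|\alpha|=k}a_\alpha$ gives the claimed inequality.

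Finiteness of $C_0$ follows by choosing $\theta_0$ to be a support point of $g_0$ with mass $w_0>0$. Then $f_{g_0}\ge w_0\,p_{\theta_0}$, so $C_0\le 1/w_0$.

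\emph{Main obstacle.} The delicate point is the orthogonal expansion in $L_2(p_{\theta_0}\dif\mu)$. Pointwise absolute convergence from Lemma~\ref{lem:NUM-multi} must be upgraded to $L_2$ convergence, so that Parseval applies. The approach is to use completeness of the Hermite/Charlier system in $L_2(p_{\theta_0})$ (products of one-dimensional complete systems). For $h\coloneqq (f_g-f_{g_0})/p_{\theta_0}$, if $h\in L_2(p_{\theta_0})$, its Fourier coefficients are $\int h\,q_\alpha\,p_{\theta_0}\,\dif\mu=a_\alpha(m_{\alpha,g}-m_{\alpha,g_0})$ by Proposition~\ref{pro:polynomial}(ii). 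Parseval then gives exactly the sum above, and the $L_2$ expansion agrees with the pointwise series. If instead $h\notin L_2(p_{\theta_0})$, Bessel's inequality shows the right-hand side is infinite: otherwise the partial sums would converge in $L_2$ to some limit, an a.e.\ subsequence would identify that limit with $h$ via the pointwise convergence, and $h$ would lie in $L_2$. In either case the inequality holds, possibly with both sides infinite.
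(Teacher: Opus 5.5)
Your proposal is correct and follows the paper's proof essentially line by line. For the lower bound you apply Cauchy--Schwarz against $q_\alpha$ in $L_2(f_{g_0}\,\dif\mu)$; for the upper bound you pull out $C_0$, expand via Lemma~\ref{lem:NUM-multi}, use the orthogonality in Proposition~\ref{pro:polynomial}(i), and convert to Frobenius norms. The only deviation is your appeal to completeness of the Hermite/Charlier system, which is unnecessary: the Cauchy-sequence and a.e.-subsequence argument in your second case already shows that whenever the coefficient series is finite, $(f_g-f_{g_0})/p_{\theta_0}$ is the $L_2(p_{\theta_0}\,\dif\mu)$ limit of its partial sums and the norm identity holds. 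That is exactly how the paper argues, and it is a Riesz--Fischer-type argument rather than Bessel's inequality.
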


\begin{proof}[Proof of Lemma~\ref{lem:DEN-multi}] 
The finiteness \(C_0<\infty\) follows since \(\theta_0\) is a support point of \(g_0\). To derive the upper bound, observe that
\[
\begin{aligned}
\chi^2(f_g,f_{g_0})
&= \int\left(\frac{f_g(x)}{f_{g_0}(x)}-1\right)^2 f_{g_0}(x)\,\dif\mu(x)\\
&=\int\left(\frac{f_g(x)-f_{g_0}(x)}{p_{\theta_0}(x)}\right)^2\frac{p_{\theta_0}(x)}{f_{g_0}(x)}\, p_{\theta_0}(x)\,\dif\mu(x)\\
&\le C_0\int\left(\sum_{k=1}^\infty\sum_{|\alpha|=k}\frac{m_{\alpha,g}-m_{\alpha,g_0}}{\alpha!}\,q_\alpha(x)\right)^2 p_{\theta_0}(x)\,\dif\mu(x)\\
&\le C_0\sum_{k=1}^\infty\sum_{|\alpha|=k} a_\alpha\,\frac{(m_{\alpha,g}-m_{\alpha,g_0})^2}{\alpha!}\\
&\le C_0\sum_{k=1}^\infty\left(\sup_{|\alpha|=k}a_\alpha\right)\frac{\|m_{k,g}-m_{k,g_0}\|_F^2}{k!},
\end{aligned}
\]
where the third line uses Lemma~\ref{lem:NUM-multi} and the definition of \(C_0\).
The fourth line follows from Proposition~\ref{pro:polynomial}(i). Indeed, if the series in the fourth line is infinite then the bound is trivial. Otherwise, Proposition~\ref{pro:polynomial}(i) implies that the partial sums
\[
\left\{\sum_{k=1}^K\sum_{|\alpha|=k}\frac{m_{\alpha,g}-m_{\alpha,g_0}}{\alpha!}\,q_\alpha\right\}_{K\in\bN}
\]
form a Cauchy sequence in \(L_2(p_{\theta_0}\,\dif\mu)\), and for each fixed \(K\),
\[
\Big\|\sum_{k=1}^K\sum_{|\alpha|=k}\frac{m_{\alpha,g}-m_{\alpha,g_0}}{\alpha!}\,q_\alpha \Big\|_{L_2(p_{\theta_0}\,\dif\mu)}^2
=\sum_{k=1}^K\sum_{|\alpha|=k} a_\alpha\,\frac{(m_{\alpha,g}-m_{\alpha,g_0})^2}{\alpha!}.
\]
In this case, the inequality in the fourth line is in fact an equality. The last line uses the identity
\begin{equation}\label{eq:Frob}
\|m_{k,g}-m_{k,g_0}\|_F^2=\sum_{|\alpha|=k}\frac{k!}{\alpha!}\,(m_{\alpha,g}-m_{\alpha,g_0})^2.
\end{equation}

Next, we prove the lower bound. By Proposition~\ref{pro:polynomial}(ii) and the Cauchy--Schwarz inequality,
\[
\begin{aligned}
|a_\alpha|\,|m_{\alpha,g}-m_{\alpha,g_0}|
&=\left|\int q_\alpha(x)\, f_g(x)\,\dif\mu(x)-\int q_\alpha(x)\,f_{g_0}(x)\,\dif\mu(x)\right|\\
&=\left|\int q_\alpha(x)\left(\frac{f_g(x)}{f_{g_0}(x)}-1\right) f_{g_0}(x)\,\dif\mu(x)\right|\\
&\le \chi(f_g,f_{g_0})\sqrt{\int q_\alpha^2(x)\, f_{g_0}(x)\,\dif\mu(x)},
\end{aligned}
\]
for any \(\alpha\in\bN^d\).
\end{proof}

The moment-comparison lemma below plays a central role in our proofs and may also be of independent interest.

\begin{lemma}\label{lem:MCL-multi}
Let \(g_0\) be a finitely discrete distribution with \(J\) support points. Fix \(g\in\cG\) and define
\begin{equation}\label{eq:moment-difference}
\Delta_g\coloneqq\max_{k\in[2J]}\|m_{k,g}-m_{k,g_0}\|_2.
\end{equation}
Then, for any \(k>2J\),
\begin{equation}\label{eq:MCL-multi}
\|m_{k,g}-m_{k,g_0}\|_2\le k(M+1)^{2Jk}\Delta_g.
\end{equation}
\end{lemma}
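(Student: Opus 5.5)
The plan is to reduce \eqref{eq:MCL-multi} to a univariate statement using the symmetric-tensor characterization \eqref{eq:SPE} of the spectral norm, and then prove the univariate statement by a linear recursion driven by a nonnegative polynomial that annihilates the support of $g_0$.

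\textbf{Reduction to one dimension.} Since $m_{k,g}-m_{k,g_0}$ is symmetric, \eqref{eq:SPE} gives
\[
\|m_{k,g}-m_{k,g_0}\|_2=\sup_{c\in\bS^{d-1}}\bigl|\langle m_{k,g}-m_{k,g_0},c^{\otimes k}\rangle\bigr|.
\]
For fixed $c$, we have $\langle m_{k,g},c^{\otimes k}\rangle=\int t^k\,\dif g^c(t)$, where $g^c$ is the pushforward of $g$ under $\theta\mapsto c^\top(\theta-\theta_0)$. Likewise for $g_0^c$. Both $g^c$ and $g_0^c$ live on $[-M,M]$, and $g_0^c$ has at most $J$ support points $t_1,\dots,t_J$. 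Writing $D_k\coloneqq\int t^k\,\dif g^c-\int t^k\,\dif g_0^c$, we get $|D_i|\le\|m_{i,g}-m_{i,g_0}\|_2\le\Delta_g$ for $i\in[2J]$, and $D_0=0$. It therefore suffices to show $|D_k|\le k(M+1)^{2Jk}\Delta_g$ for $k>2J$, uniformly in $c$.

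\textbf{Annihilating polynomial.} Let $Q(t)\coloneqq\prod_{j=1}^J(t-t_j)^2=\sum_{i=0}^{2J}r_it^i$ with $r_{2J}=1$. Its roots lie in $[-M,M]$, so by expanding the product, $\sum_{i}|r_i|\le(1+M)^{2J}$. For $k\ge 2J$, $\int t^{k-2J}Q\,\dif g_0^c=0$. This yields the recursion
\[
D_k=-\sum_{i=0}^{2J-1}r_iD_{k-2J+i}+\int t^{k-2J}Q(t)\,\dif g^c(t).
\]
The key point, and the reason for squaring the annihilating polynomial, is that $Q\ge0$. Hence
\[
\Big|\int t^{k-2J}Q\,\dif g^c\Big|\le M^{k-2J}\int Q\,\dif g^c=M^{k-2J}\Big|\sum_{i=0}^{2J}r_iD_i\Big|\le M^{k-2J}(1+M)^{2J}\Delta_g,
\]
using $\int Q\,\dif g_0^c=0$ and $D_0=0$. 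Using a degree-$J$ annihilator instead would leave a term controlled only through Cauchy--Schwarz, giving $\sqrt{\Delta_g}$ rather than a linear bound; I expect this to be the main obstacle, and the nonnegativity trick is what resolves it.

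\textbf{Induction.} Set $B_k=\Delta_g$ for $k\le 2J$ and $B_k=k(M+1)^{2Jk}\Delta_g$ for $k>2J$, and note $B_k$ is nondecreasing. Assuming $|D_j|\le B_j$ for all $j<k$, the recursion gives
\[
|D_k|\le(1+M)^{2J}(k-1)(M+1)^{2J(k-1)}\Delta_g+(M+1)^k\Delta_g\le k(M+1)^{2Jk}\Delta_g.
\]
The base steps near $k=2J+1$ are covered because $B_j\le(k-1)(M+1)^{2J(k-1)}\Delta_g$ for every $j<k$. Taking the supremum over $c\in\bS^{d-1}$ completes the argument.
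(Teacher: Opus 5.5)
Your proposal is correct and follows essentially the same route as the paper: reduce to one dimension via \eqref{eq:SPE}, use the nonnegative annihilator $\prod_{j=1}^J(t-t_j)^2$ so that the remainder term is bounded linearly in $\Delta_g$, and close the resulting recursion by induction. The only cosmetic difference is in the induction: the paper carries the slightly sharper intermediate bound $(k-2J)(M+1)^{2J(k-2J)+1}\Delta_g$, whereas you induct directly on $k(M+1)^{2Jk}\Delta_g$; both versions go through.
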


\begin{proof}[Proof of Lemma~\ref{lem:MCL-multi}] 
This lemma is a direct generalization of Lemma~\ref{lem:MCL}. For any \(k>2J\), we have
\[
\begin{aligned}
\|m_{k,g}-m_{k,g_0}\|_2
&=\sup_{c\in\bS^{d-1}}\big|\langle m_{k,g}-m_{k,g_0},c^{\otimes k}\rangle\big|\\
&=\sup_{c\in\mathbb{S}^{d-1}}\left|\int\langle\theta-\theta_0,c\rangle^k\,\dif(g-g_0)(\theta)\right|\\
&\le k(M+1)^{2Jk}\sup_{c\in\mathbb{S}^{d-1}}\max_{j\in[2J]}\left|\int\langle\theta-\theta_0,c\rangle^j\,\dif(g-g_0)(\theta)\right|\\
&= k(M+1)^{2Jk}\Delta_g .
\end{aligned}
\]
Here, the first and last equalities follow from \eqref{eq:SPE}. The inequality follows by applying Lemma~\ref{lem:MCL} to the one-dimensional distributions of \(\langle\theta,c\rangle\) for \(\theta\sim g_0\) and \(\theta\sim g\), respectively. Note that if \(g_0\) has \(J\) support points, then the corresponding distribution of \(\langle\theta,c\rangle\) has at most \(J\) support points, and Lemma~\ref{lem:MCL} still applies when the number of support points is at most \(J\). Also, by the definition of \(M\), we have \(|\langle\theta,c\rangle-\langle\theta_0,c\rangle|\le M\) for all \(c\in\bS^{d-1}\) and \(\theta\in\Theta\).
\end{proof}

\bigskip

\begin{proof}[Proof of Theorem~\ref{thm:MAIN} and Theorem~\ref{thm:MAIN2}] 
Assume that \(g_0\) is finitely discrete. To prove the first parts of both theorems, we apply Lemma~\ref{lem:CVR} and Theorem~\ref{thm:ASY}. It therefore suffices to verify Assumptions \ref{(A1)}, \ref{(A2)}, and \ref{(SS)} for the pair \((\cF,f_{g_0})\). Assumption~\ref{(SS)} holds by definition. 

For Assumption~\ref{(A1)}, note that a direct bound yields \(\|m_{k,g}-m_{k,g_0}\|_F^2\le 4d^kM^{2k}\). Hence, by Lemma~\ref{lem:DEN-multi},
\[
\chi^2(f_g,f_{g_0})
\le 4C_0\sum_{k=1}^\infty\left(\sup_{|\alpha|=k}a_\alpha\right)\frac{d^kM^{2k}}{k!},
\]
which is finite under \ref{(GP)} (recall the form of \(a_\alpha\) in Proposition~\ref{pro:polynomial}). It remains to verify Assumption~\ref{(A2)}.

By Lemma~\ref{lem:NUM-multi}, for any \(g\in\cG\),
\[
\begin{aligned}
\frac{f_g}{f_{g_0}}-1
&=\frac{p_{\theta_0}}{f_{g_0}}\left(\frac{f_g-f_{g_0}}{p_{\theta_0}}\right)\\
&=\frac{p_{\theta_0}}{f_{g_0}}
\left(\sum_{k=1}^\infty\sum_{|\alpha|=k}\frac{m_{\alpha,g}-m_{\alpha,g_0}}{\alpha!}\,q_\alpha\right)\\
&=\sqrt{\frac{p_{\theta_0}}{f_{g_0}}}
\left(\sum_{k=1}^\infty\sum_{|\alpha|=k}\frac{m_{\alpha,g}-m_{\alpha,g_0}}{\alpha!}\,
q_\alpha\sqrt{\frac{p_{\theta_0}}{f_{g_0}}}\right).
\end{aligned}
\]
Consequently, the score class admits the representation
\[
\cS
=\left\{\sqrt{\frac{p_{\theta_0}}{f_{g_0}}}
\sum_{k=1}^\infty\sum_{|\alpha|=k} c_{\alpha,g}\,h_\alpha
:\ g\in\cG\backslash g_0\right\},
\]
where
\[
c_{\alpha,g}\coloneqq
\sqrt{\frac{a_\alpha (k+1)^d}{\alpha!}}\,
\frac{|\alpha|\,(m_{\alpha,g}-m_{\alpha,g_0})}{\chi(f_g,f_{g_0})},
\qquad
h_\alpha\coloneqq
\frac{q_\alpha}{|\alpha|\sqrt{a_\alpha (k+1)^d\alpha!}}\,
\sqrt{\frac{p_{\theta_0}}{f_{g_0}}}.
\]

Since \(\sqrt{p_{\theta_0}/f_{g_0}}\) is uniformly bounded (by Lemma~\ref{lem:DEN-multi}), Example 2.10.10 of \citet{van1996weak} implies that it suffices to verify the Donsker property and to identify a square-integrable envelope for the class
\[
\left\{\sum_{k=1}^\infty\sum_{|\alpha|=k} c_{\alpha,g}h_\alpha:\ g\in\cG\backslash g_0\right\}.
\]
By Theorem 2.13.2 of \citet{van1996weak}, this class is \(f_{g_0}\,\dif\mu\)-Donsker provided that:
\begin{enumerate}[(a)]
\item \(\{h_\alpha\}_{\alpha\in\bN^d}\) is an orthogonal sequence in \(L_2(f_{g_0}\dif\mu)\) and
\(\sum_{k=1}^\infty\sum_{|\alpha|=k}\|h_\alpha\|_{L_2(f_{g_0}\dif\mu)}^2<\infty\).
\item For any \(g\in\cG\backslash g_0\), \(\sum_{k=1}^\infty\sum_{|\alpha|=k} c_{\alpha,g}h_\alpha\) converges pointwise, and
\(\sup_{g\in\cG}\sum_{k=1}^\infty\sum_{|\alpha|=k} c_{\alpha,g}^2<\infty\).
\end{enumerate}
Condition (a) follows directly from the definition of \(h_\alpha\) and Proposition~\ref{pro:polynomial}(i). To verify (b), we invoke the lower bound in Lemma~\ref{lem:DEN-multi}, which gives
\[
\begin{aligned}
\chi^2(f_g,f_{g_0})
&\ge \max_{k\in[2J]}
\left(\min_{|\alpha|=k}\frac{a_\alpha^2}{\int q_\alpha^2(x)f_{g_0}(x)\,\dif\mu(x)}\right)
\|m_{k,g}-m_{k,g_0}\|_\infty^2\\
&\ge
\left(\min_{k\in[2J]}\frac{1}{d^k}\min_{|\alpha|=k}\frac{a_\alpha^2}{\int q_\alpha^2(x)f_{g_0}(x)\,\dif\mu(x)}\right)\Delta_g^2,
\end{aligned}
\]
where \(\Delta_g\) is defined as in Lemma~\ref{lem:MCL-multi}. Applying \eqref{eq:MCL-multi}, we obtain
\[
\begin{aligned}
\sum_{k=1}^\infty\sum_{|\alpha|=k} c_{\alpha,g}^2
&=\sum_{k=1}^\infty\sum_{|\alpha|=k}
\frac{a_\alpha (k+1)^d k^2(m_{\alpha,g}-m_{\alpha,g_0})^2}{\alpha!\,\chi^2(f_g,f_{g_0})}\\
&\le \sum_{k=1}^\infty\left(\max_{|\alpha|=k}a_\alpha\right)
\frac{(k+1)^d k^2\|m_{k,g}-m_{k,g_0}\|_F^2}{k!\,\chi^2(f_g,f_{g_0})}\\
&\le
\left(\max_{k\in[2J]}d^k\max_{|\alpha|=k}\frac{\int q_\alpha^2(x)f_{g_0}(x)\,\dif\mu(x)}{a_\alpha^2}\right)
\\
&\quad\times\sum_{k=1}^\infty\left(\max_{|\alpha|=k}a_\alpha\right)
\frac{(k+1)^d k^2 d^k\|m_{k,g}-m_{k,g_0}\|_2^2}{k!\,\Delta_g^2}\\
&\le C_1,
\end{aligned}
\]
where
\[
\begin{aligned}
C_1
&\coloneqq
\left(\max_{k\in [2J]}d^k\max_{|\alpha|=k}\frac{\int q_\alpha^2(x)f_{g_0}(x)\,\dif\mu(x)}{a_\alpha^2}\right)
\Bigg(
\sum_{k=1}^{2J}\left(\max_{|\alpha|=k}a_\alpha\right)\frac{(k+1)^d k^2 d^k}{k!}\\
&\hspace{4.7em}+
\sum_{k=2J+1}^{\infty}\left(\max_{|\alpha|=k}a_\alpha\right)\frac{(k+1)^d k^4 d^k}{k!}(M+1)^{4Jk}
\Bigg)
<\infty.
\end{aligned}
\]
Finally, by the Cauchy--Schwarz inequality,
\[
\left|\sum_{k=1}^\infty\sum_{|\alpha|=k} c_{\alpha,g}h_\alpha\right|
\le
\sqrt{\sum_{k=1}^\infty\sum_{|\alpha|=k} c_{\alpha,g}^2}\;
\sqrt{\sum_{k=1}^\infty\sum_{|\alpha|=k} h_\alpha^2}
\le
\sqrt{C_1}\,\sqrt{\sum_{k=1}^\infty\sum_{|\alpha|=k} h_\alpha^2}.
\]
This yields an \(f_{g_0}\,\dif\mu\)-square-integrable envelope for \(\cS\). Consequently, Assumption~\ref{(A2)} holds.

Assume that \(g_0\) is not finitely discrete. The second parts of Theorem~\ref{thm:MAIN} and Theorem~\ref{thm:MAIN2} follow from Theorem~\ref{thm:diverge}, once we note that Assumption~\ref{(ID)} holds for the model \ref{(GP)}. Indeed, fix \(g\in\cG\). If \(\chi^2(f_g,f_{g_0})=0\), then Lemma~\ref{lem:DEN-multi} implies that the moments of \(g\) coincide with those of \(g_0\). Since \(\Theta\) is bounded, the Hausdorff moment problem is determinate, and thus \(g=g_0\).
\end{proof}

\subsubsection{Proof of Corollary~\ref{cor:postmean}}

Classical empirical Bayes analyses often begin with Tweedie's formula. However, Tweedie's formula does not apply to the composite model~\ref{(GP)}. Instead, define the vector-valued function
\[
f_g^+(x)\coloneqq \int (\theta-\theta_0)\,p_\theta(x)\,\dif g(\theta).
\]
Then, for any \(g\in\cG\), the posterior mean admits the representation
\[
\bE_g[\theta| x]=\theta_0+\frac{f_g^+(x)}{f_g(x)}.
\]

To analyze this quantity, we introduce the (vector-valued) moment coefficients
\[
m_{\alpha,g}^+\coloneqq \int (\theta-\theta_0)\,(\theta-\theta_0)^\alpha\,\dif g(\theta)
\]
where we recall the notation $(\theta-\theta_0)^\alpha := \prod_{l=1}^d (\theta_l - \theta_{0,l})^{\alpha_l}$.
Analogously to Lemma~\ref{lem:NUM-multi}, we will use an infinite-series expansion for \(f_g^+\).

\begin{lemma}\label{lem:expansion2}
Assume \ref{(GP)}. For any \(g\in\cG\),
\[
f_g^+(x)
=
p_{\theta_0}(x)\sum_{k=0}^\infty\sum_{|\alpha|=k}\frac{m_{\alpha,g}^+}{\alpha!}\,q_\alpha(x),
\]
and the series converges absolutely for any \(x\in\mathrm{supp}(f_{g_0})\).
\end{lemma}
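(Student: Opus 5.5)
This expansion follows the same route as the proof of Lemma~\ref{lem:NUM-multi}, now applied to the vector-valued integrand \((\theta-\theta_0)\,p_\theta(x)\); we argue coordinatewise.

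Fix \(x\in\mathrm{supp}(f_{g_0})\). By the argument in the proof of Lemma~\ref{lem:NUM-multi}, the map \(\theta\mapsto p_\theta(x)/p_{\theta_0}(x)\) is entire. Its Taylor series about \(\theta_0\) is
\[
\frac{p_\theta(x)}{p_{\theta_0}(x)}=\sum_{k=0}^\infty\sum_{|\alpha|=k}\frac{(\theta-\theta_0)^\alpha}{\alpha!}\,q_\alpha(x),
\]
with \(q_0\equiv 1\), and it converges absolutely for every \(\theta\). Multiplying by the \(l\)-th coordinate \((\theta_l-\theta_{0,l})\) preserves absolute convergence and gives, coordinatewise,
\[
(\theta-\theta_0)\frac{p_\theta(x)}{p_{\theta_0}(x)}=\sum_{k=0}^\infty\sum_{|\alpha|=k}\frac{(\theta-\theta_0)(\theta-\theta_0)^\alpha}{\alpha!}\,q_\alpha(x).
\]

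To integrate against \(g\), I need a dominating function uniform in \(\theta\in\Theta\). Each term is bounded in absolute value by
\[
M\cdot\frac{M^{|\alpha|}}{\alpha!}\,|q_\alpha(x)|.
\]
The series \(\sum_\alpha M^{|\alpha|}|q_\alpha(x)|/\alpha!\) is finite because the Taylor series converges absolutely at the point \(\theta_0+(M,\dots,M)\); this is exactly the bound used in Lemma~\ref{lem:NUM-multi}. Strictly, one should evaluate at a point whose coordinates have absolute deviation at least \(M\). In the Poisson coordinates one may need \(\theta_l=\theta_{0,l}-M\le 0\), but entirety of \(\theta_l\mapsto\theta_l^{x_l}e^{-\theta_l}\) still gives absolute convergence there. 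Since this constant is finite and \(g\) is a probability measure, Fubini's theorem lets us interchange sum and integral. This yields
\[
\frac{f_g^+(x)}{p_{\theta_0}(x)}=\sum_{k=0}^\infty\sum_{|\alpha|=k}\frac{m^+_{\alpha,g}}{\alpha!}\,q_\alpha(x).
\]
Multiplying through by \(p_{\theta_0}(x)\) gives the claim, and the same bound shows the resulting series converges absolutely.

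The only delicate point is the domination step: the majorant must be evaluated at a point where the multivariate power series converges absolutely. This is guaranteed by entirety, via absolute convergence of multivariate power series on polydiscs. Everything else is a direct transcription of the proof of Lemma~\ref{lem:NUM-multi}. The \(k=0\) term is \(m^+_{0,g}=\int(\theta-\theta_0)\,\dif g\), which explains why the sum starts at \(k=0\), unlike in Lemma~\ref{lem:NUM-multi}, where the \(k=0\) term cancels against the subtracted \(1\).
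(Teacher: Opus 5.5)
Your proposal is correct and matches the paper's proof: expand $p_\theta(x)/p_{\theta_0}(x)$ around $\theta_0$, multiply coordinatewise by $\theta_l-\theta_{0,l}$, dominate the terms by $M^{|\alpha|+1}|q_\alpha(x)|/\alpha!$, and swap sum and integral via Fubini. The differences are cosmetic: you keep the $k=0$ term inside the expansion instead of subtracting $1$ and adding $m^+_{0,g}$ back. Your caveat about evaluating at $\theta_{0,l}-M$ is unnecessary, since every coordinate of $\theta_0+(M,\dots,M)$ already deviates by exactly $M$ and its Poisson coordinates stay in $(0,\infty)$.
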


\begin{proof}[Proof of Lemma~\ref{lem:expansion2}] 
Fix \(l\in[d]\). Under \ref{(GP)}, the Taylor series
\[
(\theta_l-\theta_{0,l})\left(\frac{p_\theta(x)}{p_{\theta_0}(x)}-1\right)
=
\sum_{k=1}^\infty\sum_{|\alpha|=k}
\frac{(\theta_l-\theta_{0,l})(\theta-\theta_0)^\alpha}{\alpha!}\,q_\alpha(x)
\]
converges absolutely everywhere by similar arguments as in the proof of Lemma~\ref{lem:NUM-multi}. Applying Fubini's theorem to interchange the sum and the integral (the application can be justified by similar arguments as in the proof of Lemma~\ref{lem:NUM-multi}), we obtain
\[
\begin{aligned}
\frac{[f_g^+(x)]_l}{p_{\theta_0}(x)}-[m_{0,g}^+]_l
&=\int (\theta_l-\theta_{0,l})\left(\frac{p_\theta(x)}{p_{\theta_0}(x)}-1\right)\dif g(\theta)\\
&=\int \left(\sum_{k=1}^\infty\sum_{|\alpha|=k}
\frac{(\theta_l-\theta_{0,l})(\theta-\theta_0)^\alpha}{\alpha!}\,q_\alpha(x)\right)\dif g(\theta)\\
&=\sum_{k=1}^\infty\sum_{|\alpha|=k}\frac{[m_{\alpha,g}^+]_l}{\alpha!}\,q_\alpha(x).
\end{aligned}
\]
Rearranging and noting that the \(k=0\) term corresponds to \(m_{0,g}^+\) yields the desired expansion.
\end{proof}

\begin{proposition}\label{pro:postmean}
Assume \ref{(GP)} and suppose that $g_0$ is finitely discrete. Then there exist a measurable function \(h\in L_2(f_{g_0}\dif\mu)\) such that
$$
\big\|\bE_{\hat g}[\theta| x]-\bE_{g_0}[\theta| x]\big\|
\le h(x)\,\Delta_{\hat g},
$$
where $\|\cdot\|$ denotes the Euclidean norm on $\bR^d$, and \(\Delta_{\hat g}\) is defined in Lemma~\ref{lem:MCL-multi}. Moreover, if $b\in\{0,d\}$, then
$$
h(x)<\infty \qquad \text{for every admissible outcome } x.
$$
\end{proposition}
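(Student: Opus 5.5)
Write $\Delta=\Delta_{\hat g}$. We split the posterior-mean difference into a numerator part and a denominator part:
\[
\bE_{\hat g}[\theta| x]-\bE_{g_0}[\theta| x]
=\frac{f^+_{\hat g}(x)-f^+_{g_0}(x)}{f_{\hat g}(x)}
+f^+_{g_0}(x)\Big(\frac{1}{f_{\hat g}(x)}-\frac{1}{f_{g_0}(x)}\Big).
\]
Since $\|\theta-\theta_0\|\le M$ on $\Theta$, we have $\|f^+_{g_0}(x)\|\le Mf_{g_0}(x)$. The second term is therefore bounded by $M|f_{\hat g}(x)-f_{g_0}(x)|/f_{\hat g}(x)$. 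Both terms are thus controlled by the two differences $f^+_{\hat g}-f^+_{g_0}$ and $f_{\hat g}-f_{g_0}$, divided by $f_{\hat g}(x)$.

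\textbf{Step 1: bound the differences by $\Delta$.} By Lemma~\ref{lem:NUM-multi},
\[
|f_{\hat g}(x)-f_{g_0}(x)|\le p_{\theta_0}(x)\sum_{k\ge1}\sum_{|\alpha|=k}\frac{|m_{\alpha,\hat g}-m_{\alpha,g_0}|}{\alpha!}|q_\alpha(x)|.
\]
Each $|m_{\alpha,\hat g}-m_{\alpha,g_0}|$ is at most $\|m_{k,\hat g}-m_{k,g_0}\|_2$. This is at most $\Delta$ for $k\le 2J$, and at most $k(M+1)^{2Jk}\Delta$ for $k>2J$ by Lemma~\ref{lem:MCL-multi}. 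Hence
\[
|f_{\hat g}-f_{g_0}|\le p_{\theta_0}\,\Delta\, A(x),\qquad A(x):=\sum_\alpha \frac{(|\alpha|+1)(M+1)^{2J|\alpha|}}{\alpha!}|q_\alpha(x)|.
\]
The vector moments $m^+_{\alpha,g}$ are, componentwise, moments of order $|\alpha|+1$. Lemma~\ref{lem:expansion2} together with the same moment comparison therefore gives $\|f^+_{\hat g}-f^+_{g_0}\|\le p_{\theta_0}\Delta\,A^+(x)$, with $A^+$ of the same form with index shifted by one.

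\textbf{Step 2: handle the denominator $f_{\hat g}(x)$, which is random.} This is the main obstacle. For $b=d$ (pure Gaussian) or $b=0$ (pure Poisson), every kernel is bounded below on the bounded set $\Theta$ at any fixed $x$. That is, $p_\theta(x)\ge \underline p(x):=\inf_{\theta\in\bar\Theta}p_\theta(x)>0$ at any admissible $x$. Hence $f_{\hat g}(x)\ge\underline p(x)$ deterministically, and we may set
\[
h(x)=\frac{p_{\theta_0}(x)\big(\|A^+(x)\|+M A(x)\big)}{\underline p(x)},
\]
which is finite pointwise. For the Poisson case one must use $\Theta\subseteq(0,\infty)$ bounded. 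Note that $\inf\theta$ could be $0$ on the closure, so admissibility at $x$ should mean that $\underline p(x)>0$, e.g.\ that $\Theta$ is bounded away from $0$ or $x=0$. This is how I read ``admissible''.

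\textbf{Step 3: show $h\in L_2(f_{g_0}\dif\mu)$.} In the Gaussian case, $p_{\theta_0}(x)/\underline p(x)\le e^{C\|x\|}$. By Cauchy–Schwarz, $A(x)^2\le \sum_\alpha w_\alpha\, q_\alpha^2(x)\cdot\sum_\alpha w_\alpha^{-1}c_\alpha^2$, and one picks weights so that the factorial decay dominates the geometric factors. Integrating $q_\alpha^2e^{2C\|x\|}$ against $f_{g_0}$ amounts to Hermite (or Charlier) moments under a tilted Gaussian (Poisson) law. These are bounded by $C^{|\alpha|}\alpha!$ up to polynomial factors, so with the $1/\alpha!^2$ coefficients the series converges.

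For general mixed $b$ this domination may fail, since the lower bound on $f_{\hat g}$ is not uniform. There I would exploit that $\chi^2(f_{\hat g},f_{g_0})\to0$, which shows the ratio $f_{g_0}/f_{\hat g}$ is controlled only in $L_2$ sense. The pointwise claim then holds only for $b\in\{0,d\}$, while the $L_2$ bound is obtained by restricting to the event where $f_{\hat g}\ge f_{g_0}/2$ plus a complementary estimate. The delicate part I expect is exactly verifying the tilted-moment growth of $q_\alpha$ so that $h$ is square integrable.
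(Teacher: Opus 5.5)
\textbf{There is a genuine gap.} It comes from the random denominator $f_{\hat g}(x)$ that your decomposition creates.

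Your Step 2 replaces $f_{\hat g}(x)$ by $\underline p(x)=\inf_{\theta\in\bar\Theta}p_\theta(x)$. But \ref{(GP)} only requires $\Theta\subseteq\bR^b\times(0,\infty)^{d-b}$ to be bounded, not bounded away from $0$ in the Poisson coordinates; $\Theta=(0,1]$ with $b=0$ is allowed. In that case $\underline p(x)=0$ for every $x$ with a positive Poisson coordinate. Your $h$ is then $+\infty$ on a set of positive $f_{g_0}$-mass, and neither the $L_2$ claim nor the pointwise claim follows. Redefining ``admissible'' to exclude these $x$ changes the statement. (This, not mixed $b$ as such, is the real obstruction: for mixed $b$ your lower bound is no worse than in the pure cases.)

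For mixed $b$ you fall back on the event $\{f_{\hat g}\ge f_{g_0}/2\}$ together with $\chi^2(f_{\hat g},f_{g_0})\to 0$. The proposition, however, is a deterministic inequality with a deterministic $h$; it holds with any $g\in\cG$ in place of $\hat g$. A probabilistic event restriction cannot produce that. Even where $\underline p>0$, your Step 3 still needs tilted Hermite/Charlier second-moment bounds that you only sketch.

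\textbf{The missing idea} is to add and subtract the other cross term, so that only the deterministic $f_{g_0}$ appears in denominators:
\[
\frac{f^+_{\hat g}}{f_{\hat g}}-\frac{f^+_{g_0}}{f_{g_0}}
= f^+_{\hat g}\Big(\frac{1}{f_{\hat g}}-\frac{1}{f_{g_0}}\Big)+\frac{f^+_{\hat g}-f^+_{g_0}}{f_{g_0}}.
\]
Then apply your bound $\|f^+_g\|\le M f_g$ with $g=\hat g$ instead of $g=g_0$; this is valid since $\hat g$ lives on $\Theta$. The first term becomes at most $M|f_{\hat g}-f_{g_0}|/f_{g_0}$, so the random density cancels and no lower bound on $f_{\hat g}$ is needed.

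From here the argument runs as follows.
\begin{enumerate}
\item Lemma~\ref{lem:NUM-multi} and Lemma~\ref{lem:expansion2} write both differences as $p_{\theta_0}$ times moment-difference series.
\item Bound $p_{\theta_0}/f_{g_0}\le\sqrt{C_0}\sqrt{p_{\theta_0}/f_{g_0}}$, with $C_0$ from Lemma~\ref{lem:DEN-multi}.
\item Apply Cauchy--Schwarz with weights $(|\alpha|\vee1)^2(|\alpha|+1)^d a_\alpha\alpha!$. The coefficient factor is at most $C_2\Delta_{\hat g}^2$, by Lemma~\ref{lem:MCL-multi} and the index shift for $m^+_{\alpha,g}$, essentially your Step 1.
\item The function factor is
\[
S(x)^2=\sum_{\alpha}\frac{q_\alpha^2(x)\,p_{\theta_0}(x)}{(|\alpha|\vee1)^2(|\alpha|+1)^d a_\alpha\alpha!\,f_{g_0}(x)} .
\]
By the orthogonality in Proposition~\ref{pro:polynomial}(i), $\int S^2 f_{g_0}\,\dif\mu=\sum_\alpha(|\alpha|\vee1)^{-2}(|\alpha|+1)^{-d}<\infty$.
\end{enumerate}
This gives $h\propto S$ for every $b$ and every bounded $\Theta$, with no tilted-moment estimates.

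Pointwise finiteness then follows directly. In the Poisson case, $S\in L_2$ under counting measure and $f_{g_0}>0$ on $\bN^d$. In the Gaussian case, use Cram\'er's bound $|q_\alpha(x)|\le\sqrt{\alpha!/p_{\theta_0}(x)}$.
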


\begin{proof}[Proof of Proposition~\ref{pro:postmean}] Define
\[
\mathrm{Err}_{\hat g}(x)\coloneqq \big\|\bE_{\hat g}[\theta| x]-\bE_{g_0}[\theta| x]\big\|.
\]
By definition,
\[
\|f_{\hat g}^+(x)\|
\le \int \|\theta-\theta_0\|\,p_\theta(x)\,\dif\hat g(\theta)
\le M \int p_\theta(x)\,\dif\hat g(\theta)
= M f_{\hat g}(x).
\]
Applying this bound in the second inequality, and invoking Lemma~\ref{lem:NUM-multi} and Lemma~\ref{lem:expansion2} in the third, we obtain, for $x \in \mathrm{supp}(f_{g_0})$,
\begin{equation*}
\begin{aligned}
\mathrm{Err}_{\hat g}(x)
&\le
\Big\|\frac{f_{\hat g}^+(x)}{f_{\hat g}(x)}-\frac{f_{\hat g}^+(x)}{f_{g_0}(x)}\Big\|
+
\Big\|\frac{f_{\hat g}^+(x)}{f_{g_0}(x)}-\frac{f_{g_0}^+(x)}{f_{g_0}(x)}\Big\|\\
&\le
M\,\frac{|f_{\hat g}(x)-f_{g_0}(x)|}{f_{g_0}(x)}
+
\frac{\|f_{\hat g}^+(x)-f_{g_0}^+(x)\|}{f_{g_0}(x)}\\
&\le
\sqrt{C_0}\,\sqrt{\frac{p_{\theta_0}(x)}{f_{g_0}(x)}}
\bigg(
M\Big|\sum_{k=1}^\infty\sum_{|\alpha|=k}\frac{m_{\alpha,\hat g}-m_{\alpha,g_0}}{\alpha!}\,q_\alpha(x)\Big|
+
\Big\|\sum_{k=0}^\infty\sum_{|\alpha|=k}\frac{m_{\alpha,\hat g}^+-m_{\alpha,g_0}^+}{\alpha!}\,q_\alpha(x)\Big\|
\bigg),
\end{aligned}
\end{equation*}
where \(C_0\) is the same quantity as in Lemma~\ref{lem:DEN-multi}.

Recall the definition of $a_\alpha$ from Proposition~\ref{pro:polynomial} and the identity in~\eqref{eq:Frob}. Note further that
\begin{equation} \label{eq:sumb}
\begin{aligned}
\sum_{|\alpha|=k}\frac{\|m_{\alpha,\hat g}^+-m_{\alpha,g_0}^+\|^2}{\alpha!}
&=
\sum_{|\alpha|=k}\left\|
\frac{m_{\alpha,\hat g}^+-m_{\alpha,g_0}^+}{\sqrt{\alpha!(|\alpha|+1)}}\,
\sqrt{|\alpha|+1}
\right\|^2\\
&=
(k+1)\sum_{|\alpha|=k}
\left\|
\frac{m_{\alpha,\hat g}^+-m_{\alpha,g_0}^+}{\sqrt{\alpha!(|\alpha|+1)}}
\right\|^2\\
&\le
(k+1)d\sum_{|\alpha|=k+1}\frac{(m_{\alpha,\hat g}-m_{\alpha,g_0})^2}{\alpha!},
\end{aligned}
\end{equation}
where for the last line we note that $\|m_{\alpha,\hat g}^+-m_{\alpha,g_0}^+\|^2 = \sum_{l=1}^d (m_{\alpha+e_l,\hat g}-m_{\alpha+e_l,g_0})^2$ where $e_l$ is the $l$'th canonical unit vector in $\bR^d$ and that $(\alpha+e_l)! = \alpha!(\alpha_l+1) \le \alpha!(|\alpha|+1)$.

Next, applying the Cauchy--Schwarz inequality yields 
\[
\begin{aligned}
\mathrm{Err}_{\hat g}(x)
&\le
M \sqrt{C_0}
\sqrt{\sum_{k=1}^\infty\sum_{|\alpha|=k}\frac{k^2(k+1)^d a_\alpha (m_{\alpha,\hat g}-m_{\alpha,g_0})^2}{\alpha!}}\; S(x)\\
&\quad+
\sqrt{C_0}
\sqrt{\sum_{k=0}^\infty\sum_{|\alpha|=k}\frac{(k\lor 1)^2(k+1)^d a_\alpha \|m_{\alpha,\hat g}^+-m_{\alpha,g_0}^+\|^2}{\alpha!}}\; S(x)\\
&\le
M \sqrt{C_0}
\sqrt{\sum_{k=1}^\infty\Big(\sup_{|\alpha|=k} a_\alpha\Big)\frac{k^2(k+1)^d\|m_{k,\hat g}-m_{k,g_0}\|_F^2}{k!}}\; S(x)\\
&\quad+
\sqrt{C_0}
\sqrt{\sum_{k=0}^\infty\Big(\sup_{|\alpha|=k} a_\alpha\Big)\frac{(k\lor 1)^2(k+1)^{d+1}d\,\|m_{k+1,\hat g}-m_{k+1,g_0}\|_F^2}{(k+1)!}}\; S(x)\\
&\le
(M+\sqrt d)\,\sqrt{C_0}
\sqrt{\sum_{k=1}^\infty\Big(\sup_{|\alpha|\in\{k-1,k\}} a_\alpha\Big)\frac{k^2(k+1)^{d+1}\|m_{k,\hat g}-m_{k,g_0}\|_F^2}{k!}}\; S(x),
\end{aligned}
\]
where
\[
S(x)\coloneqq
\sqrt{\sum_{k=0}^\infty\sum_{|\alpha|=k}
\frac{q_\alpha^2(x)\,p_{\theta_0}(x)}
{(k\lor 1)^2(k+1)^d a_\alpha \alpha!\,f_{g_0}(x)}}.
\]
Here, we use the identities in~\eqref{eq:Frob} and~\eqref{eq:sumb} for the second inequality.
Furthermore, \(S\in L_2(f_{g_0}\dif\mu)\) by Proposition~\ref{pro:polynomial}(i).

Finally, we bound the moment-difference term using Lemma~\ref{lem:MCL-multi}:
\[
\begin{aligned}
&\sum_{k=1}^\infty\Big(\sup_{|\alpha|\in\{k-1,k\}}a_\alpha\Big)
\frac{k^2(k+1)^{d+1}\|m_{k,\hat g}-m_{k,g_0}\|_F^2}{k!}\\
&\le
\sum_{k=1}^\infty\Big(\sup_{|\alpha|\in\{k-1,k\}}a_\alpha\Big)
\frac{k^2(k+1)^{d+1}d^k\|m_{k,\hat g}-m_{k,g_0}\|_2^2}{k!}\\
&\le
C_2\,\Delta_{\hat g}^2,
\end{aligned}
\]
and
\[
\begin{aligned}
C_2
&\coloneqq
\Bigg[
\sum_{k=1}^{2J}\Big(\sup_{|\alpha|\in\{k-1,k\}}a_\alpha\Big)\frac{k^2(k+1)^{d+1}d^k}{k!}\\
&\hspace{4.7em}+
\sum_{k=2J+1}^\infty\Big(\sup_{|\alpha|\in\{k-1,k\}}a_\alpha\Big)\frac{k^4(k+1)^{d+1}d^k(M+1)^{4Jk}}{k!}
\Bigg]
<\infty.
\end{aligned}
\]
To summarize, we have established the desired bound
\[
\mathrm{Err}_{\hat g}(x)\le (M+\sqrt d)\sqrt{C_0C_2}\,S(x)\,\Delta_{\hat g}.
\]

Moreover, in the pure Poisson case (\(b=0\)), \(S(x)\) is finite for every admissible outcome \(x\) because $S \in L_2(f_{g_0}\dif\mu)$ and $\mu$ is the counting measure on $\bN$. In the pure Gaussian case (\(b=d\)), each \(q_\alpha\) is a product of Hermite polynomials and satisfies the bound\footnote{See inequality (18.14.9) in \citet{olver2024nist}. 
} 
$$
|q_\alpha(x)|=\prod_{l=1}^d|q_{\alpha_l}(x_l)|\le\prod_{l=1}^d\sqrt{\alpha_l!\exp\Big(\frac{(x_l-\theta_{0,l})^2}{2}\Big)}\le\sqrt{\frac{\alpha!}{p_{\theta_0}(x)}}. 
$$
Consequently, for all \(x\in\bR^d\),
\[
S(x)\le
\sqrt{\sum_{k=0}^\infty \frac{1}{(k\lor 1)^2\,f_{g_0}(x)}}<\infty.
\]
\end{proof}

\bigskip

\begin{proof}[Proof of Corollary~\ref{cor:postmean}]
By the first part of Theorem~\ref{thm:MAIN} and the lower bound in Lemma~\ref{lem:DEN-multi},
\[
\Delta_{\hat g}=O_\bP\Big(\frac{1}{\sqrt n}\Big).
\]
Therefore, the parametric rates in Corollary~\ref{cor:postmean} follows from Proposition~\ref{pro:postmean}.
\end{proof}

\subsubsection{Proof of Corollary~\ref{cor:wasserstein}}

To streamline the derivation, we adapt Lemma 3.1 in \citet{doss2023optimal} into the following form. 
\begin{lemma}\label{lem:W1}
Fix $g_0$ being a $J$-atomic distribution on $\bR^d$. For any $K$-atomic distribution $g$ on $\bR^d$, there exists $c_g\in\mathbb{S}^{d-1}$ such that, for all $\theta_1,\theta_2\in\mathrm{supp}(g_0)$, 
$$
\|\theta_1-\theta_2\|\le 2J^2\sqrt{d}|\langle\theta_1-\theta_2,c_g\rangle|, 
$$
and
$$
\mathcal{W}_1(g,g_0)\le 2JK\sqrt{d}\inf_{(U,V)\in\Pi(g,g_0)}\bE|\langle U-V, c_g\rangle|. 
$$
where $\Pi(g,g_0)$ is the set of couplings of $g,g_0$.
\end{lemma}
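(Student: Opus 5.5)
The plan is a probabilistic-method argument: draw \(c\) uniformly on \(\bS^{d-1}\) and show that, with positive probability, no relevant difference vector is nearly orthogonal to \(c\). Both inequalities of the lemma then hold for that realization, which we take as \(c_g\).

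\textbf{Step 1 (reduction to finitely many difference vectors).} Write \(\mathrm{supp}(g_0)=\{\vartheta_1,\dots,\vartheta_J\}\) and \(\mathrm{supp}(g)=\{\eta_1,\dots,\eta_K\}\). Every coupling \((U,V)\in\Pi(g,g_0)\) is supported on \(\mathrm{supp}(g)\times\mathrm{supp}(g_0)\). Suppose \(c\) satisfies
\[
\|\eta_i-\vartheta_j\|\le 2JK\sqrt d\,|\langle \eta_i-\vartheta_j,c\rangle| \qquad \text{for all } i\in[K],\ j\in[J].
\]
Then \(\bE\|U-V\|\le 2JK\sqrt d\,\bE|\langle U-V,c\rangle|\) for every coupling. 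Taking the infimum over \(\Pi(g,g_0)\) on both sides gives the Wasserstein bound, since \(\mathcal W_1(g,g_0)\) is the infimum of the left-hand side. Pairs with \(\eta_i=\vartheta_j\) satisfy the inequality trivially. So it suffices to find \(c\) satisfying this family of \(JK\) inequalities together with the \(\binom J2\) inequalities \(\|\vartheta_j-\vartheta_{j'}\|\le 2J^2\sqrt d\,|\langle\vartheta_j-\vartheta_{j'},c\rangle|\).

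\textbf{Step 2 (anti-concentration on the sphere).} For a fixed unit vector \(u\) and \(c\) uniform on \(\bS^{d-1}\), the variable \(\langle u,c\rangle\) has density proportional to \((1-s^2)^{(d-3)/2}\) on \([-1,1]\) (for \(d\ge2\)). For \(d\ge3\) this density is maximized at \(s=0\), where it equals \(\Gamma(d/2)/(\sqrt\pi\,\Gamma((d-1)/2))\). A Gautschi/Wendel-type bound on Gamma ratios shows this is at most \(\sqrt{d}/2\), giving
\[
\bP\big(|\langle u,c\rangle|<t\big)\le t\sqrt d .
\]
For \(d=2\) the density is unbounded at the endpoints, so the same conclusion needs a direct check: \(\langle u,c\rangle=\cos\varphi\) with \(\varphi\) uniform, and \(\bP(|\cos\varphi|<t)=\tfrac{2}{\pi}\arcsin t\le t\sqrt2\) since \(\arcsin t\le \pi t/2\) on \([0,1]\). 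The case \(d=1\) is trivial, since \(c=\pm1\) and \(|\langle u,c\rangle|=\|u\|\).

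\textbf{Step 3 (union bound).} Apply Step 2 to each normalized nonzero difference vector.
\begin{itemize}
\item Within-\(g_0\) pairs: take \(t=1/(2J^2\sqrt d)\). The failure probability is at most \(\tbinom J2\cdot \frac{1}{2J^2}<\tfrac14\).
\item Cross pairs: take \(t=1/(2JK\sqrt d)\). The failure probability is at most \(JK\cdot\frac{1}{2JK}=\tfrac12\).
\end{itemize}
The total failure probability is below \(1\), so some realization \(c_g\) satisfies all inequalities simultaneously.

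The only delicate point is the constant in Step 2 (the density bound, including the \(d=2\) case). I expect it to be the main obstacle, but it is routine. If a sharper density bound is not at hand, any bound of the form \(\bP(|\langle u,c\rangle|<t)\le C t\sqrt d\) works after adjusting the constants \(2J^2\) and \(2JK\), which only matters cosmetically for the downstream use in Corollary~\ref{cor:wasserstein}.
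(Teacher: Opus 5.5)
Your proposal is correct and follows the same probabilistic-method argument as the paper: draw a uniformly random direction $c\in\bS^{d-1}$, use the anti-concentration bound $\bP(|\langle u,c\rangle|<t)\le t\sqrt d$, and take a union bound with $t_1=1/(2J^2\sqrt d)$ and $t_2=1/(2JK\sqrt d)$. The differences are minor: you derive the anti-concentration inequality (including the $d=2$ case) and the coupling-to-$\mathcal{W}_1$ step yourself instead of citing inequality (2.2) and the concluding step of Lemma 3.1 in \citet{doss2023optimal}, and you count $\binom{J}{2}$ rather than $J^2$ within-$g_0$ pairs, which is harmless.
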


\begin{proof}[Proof of Lemma~\ref{lem:W1}]  The proof follows the proof strategy of Lemma~3.1 in \cite{doss2023optimal}. We adopt a probabilistic argument. Draw $c$ from the uniform distribution on $\mathbb{S}^{d-1}$. For any $x\in\bR^d$, by inequality (2.2) in the proof of Lemma 3.1 in \cite{doss2023optimal}
$$
\bP(|c^Tx|<t\|x\|)<t\sqrt{d}. 
$$
Let $\Theta_1\coloneqq\{\theta_1-\theta_2:\theta_1,\theta_2\in\mathrm{supp}(g_0)\}$ and $\Theta_2\coloneqq\{\theta_1-\theta_2:\theta_1\in\mathrm{supp}(g_0),\theta_2\in\mathrm{supp}(g)\}$. By a union bound,
$$
\bP(|c^Tx|<t_1\|x\|,\text{ for some }x\in\Theta_1)< J^2t_1\sqrt{d}, 
$$
and similarly, 
$$
\bP(|c^Ty|<t_2\|y\|,\text{ for some }y\in\Theta_2)<JKt_2\sqrt{d}. 
$$
Choose $t_1=1/(2J^2\sqrt{d})$ and $t_2=1/(2JK\sqrt{d})$. Then
$$
\bP\big(\{|c^Tx|\ge t_1\|x\|,\text{ for any }x\in\Theta_1\}\cap\{|c^Ty|\ge t_2\|y\|,\text{ for any }y\in\Theta_2\}\big)>0. 
$$
Hence there exists at least one direction $c_g$ for which both lower bounds hold simultaneously. The stated inequalities follow exactly as in the concluding step of the proof of Lemma 3.1 in \citet{doss2023optimal}. 
\end{proof}

\bigskip

\begin{proof}[Proof of Corollary~\ref{cor:wasserstein}]
Applying Lemma~\ref{lem:W1}, there exists \(c_{\hat g}\in\mathbb{S}^{d-1}\) such that
\[
\min_{\substack{\theta_1,\theta_2\in\mathrm{supp}(g_0)\\ \theta_1\neq\theta_2}}
\big|\langle \theta_1-\theta_2,c_{\hat g}\rangle\big|
\ge
\gamma_0
\coloneqq
\min_{\substack{\theta_1,\theta_2\in\mathrm{supp}(g_0)\\ \theta_1\neq\theta_2}}
\frac{\|\theta_1-\theta_2\|}{2J^2\sqrt d}
>0.
\]
Applying Lemma~\ref{lem:W1} again yields
\[
\begin{aligned}
\mathcal{W}_1(\hat g,g_0)
&\le
2J\,|\mathrm{supp}(\hat g)|\,\sqrt d\,
\inf_{(U,V)\in\Pi(\hat g,g_0)}
\bE\Big|\langle U-\theta_0,c_{\hat g}\rangle-\langle V-\theta_0,c_{\hat g}\rangle\Big|\\
&\le
2J\,|\mathrm{supp}(\hat g)|\,\sqrt d\,C_{J,\gamma_0,M}
\Bigg(
\max_{k\in[2J]}
\Big|
\int \langle \theta-\theta_0,c_{\hat g}\rangle^k\,\dif(\hat g-g_0)(\theta)
\Big|
\Bigg)^{1/2}\\
&\le
2J\,|\mathrm{supp}(\hat g)|\,\sqrt d\,C_{J,\gamma_0,M}
\sup_{c\in\mathbb{S}^{d-1}}
\Bigg(
\max_{k\in[2J]}
\Big|
\int \langle \theta-\theta_0,c\rangle^k\,\dif(\hat g-g_0)(\theta)
\Big|
\Bigg)^{1/2}\\
&=
2J\,|\mathrm{supp}(\hat g)|\,\sqrt d\,C_{J,\gamma_0,M}\,
\Delta_{\hat g}^{1/2},
\end{aligned}
\]
where \(\Delta_{\hat g}\) is defined as in Lemma~\ref{lem:MCL-multi} and the last line uses $\int \langle \theta-\theta_0,c\rangle^k\,\dif(\hat g-g_0)(\theta) = \langle m_{k,\hat g} - m_{k,g_0},c^{\otimes k}\rangle$. Here \(C_{J,\gamma_0,M}\) is the constant in Proposition~5 of \citet{wu2020optimal}, depending only on \(J\), \(\gamma_0\), and \(M\); this proposition yields the second inequality above.

By the first part of Theorem~\ref{thm:MAIN} and the lower bound in Lemma~\ref{lem:DEN-multi},
\[
\Delta_{\hat g}=O_\bP\Big(\frac{1}{\sqrt n}\Big).
\]
This completes the proof.
\end{proof}

\subsection{Proof of Theorem~\ref{thm:diverge}}

The analysis crucially relies on an orthonormal polynomial system associated with \(g_0\).

\begin{proposition}\label{pro:ORT-multi}
Assume that \(g_0\) is supported on a compact set \(\Theta\) and is not finitely discrete. Then there exists a sequence of polynomials \(\{q_k\}_{k\in\bN}\) on \(\Theta\) such that \(q_0(\theta)\equiv 1\) and, for any \(k,k'\in\bN\),
\[
\int_\Theta q_k(\theta)\,q_{k'}(\theta)\,\dif g_0(\theta)=\mathbf{1}_{\{k=k'\}}.
\]
\end{proposition}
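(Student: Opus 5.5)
The plan is to run Gram--Schmidt in \(L_2(g_0)\) on a suitable sequence of polynomials. The key input is that, because \(g_0\) is not finitely discrete, no nonzero polynomial vanishes \(g_0\)-almost everywhere. Indeed, if a polynomial \(P\not\equiv 0\) satisfied \(\int P^2\,\dif g_0=0\), then \(g_0\) would be supported on the zero set \(Z(P)\cap\Theta\). In dimension one this set is finite, which contradicts the assumption directly. In general dimension this argument does not apply as stated, since zero sets of multivariate polynomials can be infinite (for instance, \(g_0\) could be uniform on a segment of a line). I would therefore not orthonormalize all monomials in \(\bR^d\). Instead I would construct the sequence only inside the space of polynomials that are nonzero in \(L_2(g_0)\).

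Concretely, let \(\mathcal P\) be the space of polynomials on \(\bR^d\), and let \(\mathcal N=\{P\in\mathcal P:\int P^2\,\dif g_0=0\}\). Each \(P\in\mathcal P\) lies in \(L_2(g_0)\) because \(\Theta\) is compact, so \(P\) is bounded on \(\Theta\). The quotient \(\mathcal P/\mathcal N\) embeds isometrically into \(L_2(g_0)\) and is spanned by the images of the monomials \(\theta^\alpha\). I would enumerate the monomials by total degree, with \(\theta^0=1\) first, and discard any monomial whose image is linearly dependent on the images of earlier ones. Gram--Schmidt applied to the survivors produces polynomials \(q_0\equiv1, q_1,q_2,\dots\) that are orthonormal in \(L_2(g_0)\). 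Here \(q_0\equiv 1\) works because \(\int 1\,\dif g_0=1\).

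It remains to show that this sequence is infinite, which is the only nontrivial point. This requires \(\dim(\mathcal P/\mathcal N)=\infty\). Suppose instead that the dimension equals some finite \(N\). Since \(g_0\) is not finitely discrete, its support contains \(N+1\) distinct points \(\vartheta_0,\dots,\vartheta_N\). I would choose polynomial interpolants \(P_j\) with \(P_j(\vartheta_i)=\mathbf 1_{\{i=j\}}\). These exist, for example as products of affine functions \(\langle \theta-\vartheta_i,c\rangle\) for a direction \(c\) that separates all the points, which is the same type of choice used in Lemma~\ref{lem:W1}. By the finite dimension, some nontrivial combination \(P=\sum_j b_jP_j\) lies in \(\mathcal N\). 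Then \(P\) vanishes \(g_0\)-a.e., and since \(P\) is continuous it vanishes on \(\mathrm{supp}(g_0)\). This gives \(P(\vartheta_j)=b_j=0\) for every \(j\), a contradiction. Hence the Gram--Schmidt process never terminates.

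The main obstacle is exactly this nondegeneracy step. In higher dimensions one cannot orthonormalize all monomials, so the essential care is to pass to the quotient by \(\mathcal N\) and to use the interpolation argument above. The remaining verifications are routine: each \(q_k\) is a polynomial, and the orthonormality relation \(\int q_kq_{k'}\,\dif g_0=\mathbf 1_{\{k=k'\}}\) holds by construction.
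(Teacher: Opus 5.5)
Your argument is correct, but it takes a different route from the paper's. The paper avoids the multivariate zero-set issue with a one-line reduction. If every coordinate marginal of \(g_0\) were finitely discrete, then \(g_0\) would be supported on the finite product of the marginal supports. So some coordinate \(\theta_l\) has a marginal that is not finitely discrete. For that coordinate, no nonzero univariate polynomial in \(\theta_l\) can vanish \(g_0\)-a.e., since its zero set is finite. Hence \(1,\theta_l,\theta_l^2,\dots\) are linearly independent in \(L_2(g_0)\), and Gram--Schmidt on them gives the \(q_k\).

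You instead keep all multivariate monomials and quotient by the \(g_0\)-null polynomials. You then prove \(\dim(\mathcal P/\mathcal N)=\infty\) by interpolating at \(N+1\) points of the support, which is necessarily infinite, using that a continuous function vanishing \(g_0\)-a.e.\ vanishes on \(\mathrm{supp}(g_0)\).

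The paper's route is shorter and needs only the one-dimensional fact you note at the start. Yours is coordinate-free and yields more. Your \(q_k\) span all polynomials modulo \(\mathcal N\), and polynomials are dense in \(L_2(g_0)\) when \(\Theta\) is compact (by Stone--Weierstrass and density of continuous functions). So you actually obtain an orthonormal basis of \(L_2(g_0)\). The paper obtains only an infinite orthonormal family with \(q_0\equiv 1\), which is all that Lemma~\ref{lem:INF-multi} uses.
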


\begin{proof} 
Since $g_0$ is not finitely discrete, there must exist an index $l \in [d]$ such that  the $l$th marginal of $g_0$ is not finite discrete. For this $l$, the elements in $\{1,\theta_l,\theta_l^2,\dots\}$ are linearly independent in $L_2(g_0)$. The desired polynomials can now be constructed through the Gram-Schmidt process. 
\end{proof}

These polynomials provide a convenient parameterization of a nested sequence of submodels. For each nonnegative integer \(K\), define the order-\(K\) submodel by
\[
\cF^{\le K}\coloneqq\{f_g:\ g\in\cG^{\le K}\},
\]
where
\[
\cG^{\le K}\coloneqq
\left\{g\in\cG:\ \exists\,c\in\bR^K \text{ such that }
\dif g(\theta)=\left(1+\sum_{k=1}^K c_k q_k(\theta)\right)\dif g_0(\theta)\right\}.
\]
The corresponding score set admits an explicit characterization.

\begin{lemma}\label{lem:INF-multi}
Assume \ref{(ID)} and fix \(K\ge 1\). The score set corresponding to the order-\(K\) submodel \(\cG^{\le K}\) satisfies
\begin{equation}\label{eq:SCO-multi}
\cS^{\le K}\coloneqq\big\{s_{f_g}:\ g\in\cG^{\le K}\backslash g_0\big\}
=
\left\{
\frac{\sum_{k=1}^K c_k h_k}{\bigl\|\sum_{k=1}^K c_k h_k\bigr\|_{L_2(f_{g_0}\dif\mu)}}:\ c\in\bS^{K-1}
\right\},
\end{equation}
where $\bS^{K-1}$ is the unit sphere in $\bR^K$ and
\[
h_k(x)\coloneqq
\frac{\int p_\theta(x)\,q_k(\theta)\,\dif g_0(\theta)}{f_{g_0}(x)}\,\mathbf{1}_{\{f_{g_0}(x)>0\}},
\qquad k\in[K].
\]
Moreover, \(\{h_k\}_{k=1}^K\) are linearly independent elements of \(L_2(f_{g_0}\dif\mu)\).
\end{lemma}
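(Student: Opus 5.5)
The plan is to compute the likelihood ratio of a submodel element directly. Take \(g\in\cG^{\le K}\) with \(\dif g=(1+\sum_{k=1}^K c_kq_k)\dif g_0\) for some \(c\in\bR^K\). By Fubini,
\[
f_g(x)=f_{g_0}(x)+\sum_{k=1}^K c_k\int p_\theta(x)q_k(\theta)\,\dif g_0(\theta),
\]
so on \(\{f_{g_0}>0\}\) we get \(f_g/f_{g_0}-1=\sum_{k=1}^K c_kh_k\). Off this set the identity also holds \(f_{g_0}\dif\mu\)-a.e., which is all the \(L_2(f_{g_0}\dif\mu)\) quantities require. Because \(\Theta\) is bounded and the \(q_k\) are polynomials, each \(q_k\) is bounded on \(\Theta\). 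Hence \(|h_k|\le C_k\), and every \(h_k\) lies in \(L_2(f_{g_0}\dif\mu)\). It follows that \(\chi(f_g,f_{g_0})=\|\sum_k c_kh_k\|_{L_2(f_{g_0}\dif\mu)}\), and the score is \(s_{f_g}=\sum_k c_kh_k/\|\sum_k c_kh_k\|\).

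Next I will show linear independence of \(h_1,\dots,h_K\), and I expect this step to carry the real content. Suppose \(\sum_k c_kh_k=0\) in \(L_2(f_{g_0}\dif\mu)\) with \(c\neq0\). Rescale \(c\) to \(tc\) with \(t>0\) small enough that \(1+t\sum_kc_kq_k\ge0\) on \(\Theta\); this is possible because the \(q_k\) are bounded there. The resulting \(g\) is a probability measure on \(\Theta\): its total mass is \(1\) because \(\int q_k\,\dif g_0=\int q_kq_0\,\dif g_0=0\) by Proposition~\ref{pro:ORT-multi}. So \(g\in\cG\), and \(\chi^2(f_g,f_{g_0})=t^2\|\sum c_kh_k\|^2=0\). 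Assumption~\ref{(ID)} then forces \(g=g_0\), which means \(\sum_kc_kq_k=0\) \(g_0\)-a.e. But orthonormality gives \(\int(\sum c_kq_k)^2\,\dif g_0=\|c\|^2>0\), a contradiction.

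It remains to prove the equality of sets, and by the normalization in the score both sides depend only on the direction of \(c\). For the inclusion ``\(\subseteq\)'': given \(g\in\cG^{\le K}\setminus g_0\), its coefficient vector satisfies \(c\neq0\), since \(c=0\) gives \(g=g_0\). Its score is \(\sum c_kh_k/\|\cdot\|\), and this is unchanged when \(c\) is replaced by \(c/\|c\|\in\bS^{K-1}\). The denominator is nonzero by linear independence. For the inclusion ``\(\supseteq\)'': given \(c\in\bS^{K-1}\), choose \(t>0\) small as above, so that \(g_t\) defined by \(\dif g_t=(1+t\sum c_kq_k)\dif g_0\) is a probability measure in \(\cG^{\le K}\). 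We have \(g_t\ne g_0\), again by orthonormality. Its score equals the desired normalized function, because multiplying \(c\) by \(t>0\) cancels in the ratio.

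The only delicate points are the positivity and normalization of the perturbed measure and the use of \ref{(ID)} for linear independence. Both rest on the boundedness of \(\Theta\), which makes the polynomials \(q_k\) bounded there.
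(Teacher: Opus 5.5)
Your proposal is correct and follows essentially the same route as the paper. Both arguments bound $h_k$ via the boundedness of the $q_k$ on $\Theta$, identify $\chi(f_{g_c},f_{g_0})=\|\sum_k c_kh_k\|_{L_2(f_{g_0}\dif\mu)}$, use \ref{(ID)} for positivity and hence linear independence, and obtain the reverse inclusion by rescaling $c$ so that $1+\sum_k c_kq_k\ge 0$. Your explicit rescaling in the linear-independence step fills a detail the paper leaves implicit: its appeal to \ref{(ID)} directly covers only those $c$ for which $g_c$ is a genuine probability measure.
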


\begin{proof}[Proof of Lemma~\ref{lem:INF-multi}]
We establish two key facts. First, for each \(k\in[K]\), the function \(h_k\) is \(f_{g_0}\dif\mu\)-square integrable, since
\[
|h_k(x)|
\le \sup_{\theta\in\Theta}|q_k(\theta)|
<\infty.
\]
Consequently, for \(\dif g_c=(1+\sum_{k=1}^K c_k q_k)\dif g_0\in\cG^{\le K}\),
\begin{equation}\label{eq:CHI-multi}
\chi(f_{g_c},f_{g_0})
=\left\|\sum_{k=1}^K c_k h_k\right\|_{L_2(f_{g_0}\dif\mu)}
<\infty.
\end{equation}

Second, for such \(g_c\) with \(c\neq 0\), we have
\[
\int_\Theta q_k(\theta)\,\dif g_c(\theta)=c_k,
\qquad k\in[K],
\]
whereas \(\int_\Theta q_k(\theta)\,\dif g_0(\theta)=0\) for all \(k\in[K]\). Hence \(g_c\neq g_0\), and by Assumption~\ref{(ID)},
\begin{equation}\label{eq:NOM-multi}
\left\|\sum_{k=1}^K c_k h_k\right\|_{L_2(f_{g_0}\dif\mu)}
=\chi(f_{g_c},f_{g_0})
>0.
\end{equation}
In particular, the functions \(\{h_k\}_{k=1}^K\) are linearly independent. Moreover,
\[
\cS^{\le K}
\subseteq
\left\{\frac{\sum_{k=1}^K c_k h_k}{\bigl\|\sum_{k=1}^K c_k h_k\bigr\|_{L_2(f_{g_0}\dif\mu)}}:\ c\in\bS^{K-1}\right\},
\]
so it remains to prove the reverse inclusion.

To this end, fix an arbitrary \(c\in\bS^{K-1}\) and define
\[
C \coloneqq \sum_{k=1}^K \sup_{\theta\in\Theta}|q_k(\theta)|.
\]
Then \(\sup_{\theta\in\Theta}\frac{1}{C}\sum_{k=1}^K|c_k q_k(\theta)|\le 1\), and since \(\int_\Theta q_k(\theta)\,\dif g_0(\theta)=0\) for \(k\in[K]\), it follows that \(g_{c/C}\) is a probability measure on \(\Theta\). Noting that the score of \(f_{g_{c/C}}\) equals
\[
\frac{\sum_{k=1}^K c_k h_k}{\bigl\|\sum_{k=1}^K c_k h_k\bigr\|_{L_2(f_{g_0}\dif\mu)}},
\]
we conclude that
\[
\left\{\frac{\sum_{k=1}^K c_k h_k}{\bigl\|\sum_{k=1}^K c_k h_k\bigr\|_{L_2(f_{g_0}\dif\mu)}}:\ c\in\bS^{K-1}\right\}
\subseteq \cS^{\le K}.
\]
This completes the proof.
\end{proof}

We are now ready to state and prove the key result, from which Theorem~\ref{thm:diverge} follows.

\begin{theorem}\label{thm:INF-multi}
Assume \ref{(ID)}. For every \(K\ge 1\), the pair \((\cF^{\le K},f_{g_0})\) satisfies Assumptions \ref{(A1)}, \ref{(A2)}, and \ref{(SS)}. Moreover, for any \(K\ge 1\), both \(n\,\chi^2(f_{\hat{g}_K},f_{g_0})\) and \(2\{\ell_n(f_{\hat{g}_K})-\ell_n(f_{g_0})\}\) converge in distribution to \(\chi^2(K)\). Here \(\hat{g}_K\) is any (approximate) MLE over \(\cG^{\le K}\) satisfying
\[
\sup_{g\in\cG^{\le K}}\ell_n(f_g)-\ell_n(f_{\hat{g}_K})=o_\bP(1).
\]
\end{theorem}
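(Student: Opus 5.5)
We verify the three assumptions for the pair \((\cF^{\le K},f_{g_0})\) and then apply Theorem~\ref{thm:ASY}, identifying the supremum of the limiting process over the score set as a \(\chi^2(K)\) variable.

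\emph{Assumption (SS).} Take \(g_c\in\cG^{\le K}\) with \(\dif g_c=(1+\sum_k c_kq_k)\dif g_0\), and let \(t\in[0,1]\). The mixture satisfies
\[
(1-t)f_{g_0}+tf_{g_c}=f_{g_{tc}},
\]
and \(g_{tc}\) is again a probability measure: its density \(1+t\sum_k c_kq_k=(1-t)+t(1+\sum_kc_kq_k)\) is nonnegative. Hence (SS) holds.

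\emph{Assumption (A1).} By \eqref{eq:CHI-multi} and \eqref{eq:NOM-multi} in Lemma~\ref{lem:INF-multi}, every \(f_g\neq f_{g_0}\) in the class satisfies \(\chi^2(f_g,f_{g_0})\in(0,\infty)\). Note that \(f_g\neq f_{g_0}\) forces \(c\neq0\), and \(c\neq 0\) forces \(g\neq g_0\).

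\emph{Assumption (A2).} By Lemma~\ref{lem:INF-multi}, the score set is \(\{\langle c,h\rangle/\|\langle c,h\rangle\|: c\in\bS^{K-1}\}\) with \(h=(h_1,\dots,h_K)\). Because the \(h_k\) are linearly independent, the Gram matrix \(\Sigma=(\bE h_jh_k)\) is positive definite, so
\[
\|\langle c,h\rangle\|^2=c^\top\Sigma c\ge\lambda_{\min}(\Sigma)>0 .
\]
Each score is therefore \(\sum_k a_k h_k\) with \(|a|\le\lambda_{\min}^{-1/2}\). Since each \(|h_k|\le\sup_\Theta|q_k|\), the class is uniformly bounded, giving a bounded (hence square-integrable) envelope. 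It is also a Lipschitz-in-parameter family indexed by a compact subset of \(\bR^K\), with the map \(c\mapsto a(c)\) smooth on the sphere. It is thus a subset of a finite-dimensional vector space of functions, so it is VC-subgraph with bounded envelope, and therefore Donsker (e.g.\ Theorem 2.7.11 or Lemma 2.6.15 of \citet{van1996weak}).

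\emph{Identifying the limit.} Theorem~\ref{thm:ASY} gives
\[
n\chi^2(f_{\hat g_K},f_{g_0})=2\{\ell_n(f_{\hat g_K})-\ell_n(f_{g_0})\}+o_\bP(1)=\sup_{s}[(\bG_n(s))_+]^2+o_\bP(1).
\]
It remains to compute this supremum. Write \(Z_n=\bG_n(h)\in\bR^K\); by the multivariate CLT, \(Z_n\Dkonv N(0,\Sigma)\). Then
\[
\sup_{c\in\bS^{K-1}}\frac{(\langle c,Z_n\rangle)_+^2}{c^\top\Sigma c}=\sup_{c\neq0}\frac{(c^\top Z_n)_+^2}{c^\top\Sigma c}=Z_n^\top\Sigma^{-1}Z_n .
\]
The first equality holds because the ratio is scale invariant. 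The second holds because the sign of \(c\) is free, so the positive part is harmless, and the Cauchy--Schwarz maximizer is \(c=\Sigma^{-1}Z_n\). By the continuous mapping theorem, \(Z_n^\top\Sigma^{-1}Z_n\Dkonv\chi^2(K)\).

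The step requiring the most care is verifying the Donsker property and the reduction of the supremum. Both hinge on the positive definiteness of \(\Sigma\), which Lemma~\ref{lem:INF-multi} already supplies. With that in place, the remaining points are routine checks: that \(\bG_n\) of a linear combination is the corresponding linear combination of \(\bG_n(h_k)\) (immediate, since \(\bG_n\) is linear), and that \(\bE h_k=0\) (true because \(\int q_k\,\dif g_0=0\)).
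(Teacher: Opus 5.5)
Your proposal is correct and follows essentially the same route as the paper. Both verify \ref{(SS)}, \ref{(A1)} and \ref{(A2)} via Lemma~\ref{lem:INF-multi} and the positive definiteness of $\Sigma$, apply Theorem~\ref{thm:ASY}, and reduce the supremum over the score set to a quadratic form distributed as $\chi^2(K)$. The one minor variation is where you do that reduction: you compute it exactly for the empirical process, $\sup_{s\in\cS}[(\bG_n(s))_+]^2=Z_n^\top\Sigma^{-1}Z_n$, and then use the multivariate CLT, whereas the paper first passes to the limiting Gaussian process and writes it as $c^\top\Sigma^{1/2}Z/\sqrt{c^\top\Sigma c}$; your version neatly avoids the continuous-mapping step in $\ell^\infty(\cS)$.
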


\begin{proof}[Proof of Theorem~\ref{thm:INF-multi}] 
For any \(K\ge 1\), the pair \((\cF^{\le K},f_{g_0})\) satisfies \ref{(SS)} by definition and \ref{(A1)} by \eqref{eq:CHI-multi}. We now verify \ref{(A2)} for the score set \(\cS^{\le K}\) defined in \eqref{eq:SCO-multi}. By \eqref{eq:NOM-multi} and the compactness of \(\bS^{K-1}\),
\begin{equation}\label{eq:hlinindep-multi}
\inf_{c\in\bS^{K-1}}\left\|\sum_{k=1}^K c_k h_k\right\|_{L_2(f_{g_0}\dif\mu)}>0.
\end{equation}
In view of \eqref{eq:SCO-multi}, it follows that \(\cS^{\le K}\) is, up to a uniformly bounded scaling, contained in the convex hull of the finite collection \(\{h_k:\ k\in[K]\}\subseteq L_2(f_{g_0}\dif\mu)\). Hence \(\cS^{\le K}\) is \(f_{g_0}\dif\mu\)-Donsker and admits an \(f_{g_0}\dif\mu\)-square-integrable envelope, so Assumption~\ref{(A2)} holds; see Theorem~2.10.3 of \citet{van1996weak}.

By Theorem~\ref{thm:ASY} applied to the pair \((\cF^{\le K},f_{g_0})\), both \(n\,\chi^2(f_{\hat{g}_K},f_{g_0})\) and \(2\{\ell_n(f_{\hat{g}_K})-\ell_n(f_{g_0})\}\) converge in distribution to
\[
\sup_{s\in\cS^{\le K}}\bigl[(\bG(s))_+\bigr]^2,
\]
where \(\bG\) is a centered Gaussian process indexed by \(\cS^{\le K}\) with covariance function
\[
\mathrm{Cov}\bigl(\bG(s_1),\bG(s_2)\bigr)
\coloneqq
\int s_1(x)\,s_2(x)\,f_{g_0}(x)\,\dif\mu(x),
\qquad s_1,s_2\in\cS^{\le K}.
\]
This process admits an equivalent finite-dimensional representation. Let \(Z\sim N(0,I_K)\) and define \(\Sigma\in\bR^{K\times K}\) by
\[
\Sigma_{k_1,k_2}\coloneqq \int h_{k_1}(x)\,h_{k_2}(x)\,f_{g_0}(x)\,\dif\mu(x),
\qquad 1\le k_1,k_2\le K.
\]
Then \(\Sigma\) has full rank. Indeed, if \(\Sigma\) were singular, there would exist \(a\in\bS^{K-1}\) such that
\[
0=a^\top \Sigma a
=\sum_{i,j=1}^K a_i a_j \int h_i(x)\,h_j(x)\,f_{g_0}(x)\,\dif\mu(x)
=\int\Big(\sum_{j=1}^K a_j h_j(x)\Big)^2 f_{g_0}(x)\,\dif\mu(x),
\]
contradicting \eqref{eq:hlinindep-multi}.

By Lemma~\ref{lem:INF-multi}, for every \(s\in\cS^{\le K}\) there exists \(c=c(s)\in\bS^{K-1}\) such that
\[
s(x)=s_c(x)\coloneqq
\frac{\sum_{k=1}^K c_k h_k(x)}
{\bigl\|\sum_{k=1}^K c_k h_k\bigr\|_{L_2(f_{g_0}\dif\mu)}}.
\]
Define a centered Gaussian process \(\widetilde\bG\) on \(\bS^{K-1}\) by
\[
\widetilde\bG(c)\coloneqq \frac{c^\top \Sigma^{1/2}Z}{\sqrt{c^\top \Sigma c}},
\qquad c\in\bS^{K-1},
\]
where \(Z\sim N(0,I_K)\). It is straightforward to verify that, for any \(c,c'\in\bS^{K-1}\),
\[
\mathrm{Cov}\bigl(\widetilde\bG(c),\widetilde\bG(c')\bigr)
=
\mathrm{Cov}\bigl(\bG(s_c),\bG(s_{c'})\bigr).
\]
Consequently,
\[
\sup_{s\in\cS^{\le K}}\bigl[(\bG(s))_+\bigr]^2
\ \stackrel{\mathcal D}{=}\ 
\sup_{c\in\bS^{K-1}}\bigl[(\widetilde\bG(c))_+\bigr]^2.
\]
A direct calculation then yields the chi-square limit:
\[
\begin{aligned}
\sup_{c\in\bS^{K-1}}\bigl[(\widetilde\bG(c))_+\bigr]^2
&=\left(\sup_{c\in\bS^{K-1}} \frac{c^\top \Sigma^{1/2}Z}{\sqrt{c^\top \Sigma c}}\right)^2\\
&=\left(\sup_{u\in\bS^{K-1}} u^\top Z\right)^2
= Z^\top Z,
\end{aligned}
\]
where we used that one of \(\widetilde\bG(c)\) and \(\widetilde\bG(-c)\) is always nonnegative, and that \(\Sigma\) has full rank so that
\[
\left\{\frac{\Sigma^{1/2}c}{\sqrt{c^\top \Sigma c}}:\ c\in\bS^{K-1}\right\}=\bS^{K-1}.
\]
\end{proof}

\bigskip

\begin{proof}[Proof of Theorem~\ref{thm:diverge}] 
Theorem~\ref{thm:diverge} is now a direct consequence of Theorem~\ref{thm:INF-multi}. To prove divergence of the likelihood ratio statistic, fix \(M>0\) and \(K\ge 1\). By the inclusion \(\cF^{\le K}\subseteq \cF\) and the Portmanteau theorem,
\[
\begin{aligned}
\liminf_{n\to\infty}\bP\Big(\sup_{f\in\cF}\ell_n(f)-\ell_n(f_{g_0})>M\Big)
&\ge
\liminf_{n\to\infty}\bP\Big(\sup_{f\in\cF^{\le K}}\ell_n(f)-\ell_n(f_{g_0})>M\Big)\\
&\ge
\bP(\chi^2(K)>2M),
\end{aligned}
\]
where the last inequality uses that
\(
2\Big\{\sup_{f\in\cF^{\le K}}\ell_n(f)-\ell_n(f_{g_0})\Big\}
\Dkonv \chi^2(K)
\)
by Theorem~\ref{thm:INF-multi}. Since \(\bP(\chi^2(K)>2M)\to 1\) as \(K\to\infty\), it follows that for any \(M>0\),
\[
\liminf_{n\to\infty}\bP\Big(\sup_{f\in\cF}\ell_n(f)-\ell_n(f_{g_0})>M\Big)=1,
\]
and therefore \(L_n(\cG,g_0)\to\infty\) in probability.

To prove divergence of the normalized chi-square risk, fix \(M,c>0\) and \(K\ge 1\), and let \(\hat g_K\) be the (approximate) MLE from Theorem~\ref{thm:INF-multi}. We have
\[
\bP\Big(\sup_{g\in\cG_n(c)} n\chi^2(f_g,f_{g_0})>M\Big)
\ge
\bP\Big(n\chi^2(f_{\hat g_K},f_{g_0})>M,\ \hat g_K\in\cG_n(c)\Big).
\]
Using \(\bP(A\cap B)\ge \bP(A)-\bP(B^c)\) with \(A=\{n\chi^2(f_{\hat g_K},f_{g_0})>M\}\) and \(B=\{\hat g_K\in\cG_n(c)\}\), it follows that
\[
\bP\Big(\sup_{g\in\cG_n(c)} n\chi^2(f_g,f_{g_0})>M\Big)
\ge
\bP\Big(n\chi^2(f_{\hat g_K},f_{g_0})>M\Big)
-\bP\Big(\ell_n(f_{\hat g_K})-\ell_n(f_{g_0})\le c\Big).
\]
Taking \(\liminf\) on both sides and applying the Portmanteau theorem together with Theorem~\ref{thm:INF-multi}, we obtain
\[
\liminf_{n\to\infty}\bP\Big(\sup_{g\in\cG_n(c)} n\chi^2(f_g,f_{g_0})>M\Big)\ge
\bP\big(\chi^2(K)>M\big)
-\bP\big(\chi^2(K)\le 2c\big).
\]
Since the right-hand side can be made arbitrarily close to \(1\) by choosing \(K\) sufficiently large, the proof is complete.
\end{proof}

\subsection{Proof of Proposition~\ref{pro:divergence}}

\begin{proof}[Proof of Proposition~\ref{pro:divergence} (the Gaussian case)] 
We first treat the Gaussian setting and assume \ref{(GP)} with $d=b=1$. Without loss of generality, we take $g_0=\delta_0$, so that $f_{g_0}=p_0$. Following Hartigan's idea, we restrict attention to the two-point mixture subfamily
\[
f_{\theta,t}(x)\coloneqq (1-t)p_0(x)+t\,p_\theta(x),\qquad t\in[0,1].
\]
Fix $K\ge 1$. Choose distinct nonzero parameters $\theta_1,\dots,\theta_K$, and define
\[
\cF^{K}\coloneqq\bigl\{f_{\theta,t}:\ \theta\in\{\theta_1,\dots,\theta_K\},\ t\in[0,1]\bigr\}.
\]
The associated score set is
\[
\cS^K
\coloneqq \{s_f:\ f\in \cF^K\backslash f_{g_0}\}
=\{s_k:\ k\in[K]\},
\qquad 
s_k\coloneqq\frac{p_{\theta_k}/p_0-1}{\chi(p_{\theta_k},p_0)}.
\]
It is immediate that the pair $(\cF^{K},f_{g_0})$ satisfies Assumptions \ref{(A1)}, \ref{(A2)}, and \ref{(SS)}.

Let $\hat{f}_K$ be any (approximate) MLE over $\cF^{K}$ such that
\[
\sup_{f\in\cF^{K}}\ell_n(f)-\ell_n(\hat{f}_K)=o_\bP(1).
\]
By Theorem~\ref{thm:ASY} applied with the pair $(\cF^{K},f_{g_0})$, both $n\,\chi^2(\hat{f}_K,f_{g_0})$ and $2\{\ell_n(\hat{f}_K)-\ell_n(f_{g_0})\}$ converge in distribution to
\begin{equation}\label{eq:limit}
\max_{k\in [K]}\bigl[(\bG_k)_+\bigr]^2
=\Bigl[\bigl(\max_{k\in [K]}\bG_k\bigr)_+\Bigr]^2,
\end{equation}
where $\{\bG_k\}_{k\in[K]}$ are mean-zero, unit-variance Gaussian random variables with, for any $k_1,k_2\in[K]$,
\[
\begin{aligned}
\mathrm{Cov}(\bG_{k_1},\bG_{k_2})
&\coloneqq
\int s_{k_1}(x)\,s_{k_2}(x)\,f_{g_0}(x)\,\dif\mu(x)\\
&=\frac{\exp(\theta_{k_1}\theta_{k_2})-1}{\sqrt{(\exp(\theta_{k_1}^2)-1)(\exp(\theta_{k_2}^2)-1)}}.
\end{aligned}
\]
Without loss of generality, assume that $\Theta$ is unbounded above. Fix $x>0$. For any $\eps > 0$ there exist $\theta_1,\dots,\theta_K>0$ such that $|\mathrm{Cov}(\bG_{k_1},\bG_{k_2})| \leq\eps$ for all $k_1\neq k_2 \in [K]$. The function $\Sigma \mapsto \bP(\max_{k\in[K]} W_k \leq x)$, where $W \sim N(0,\Sigma)$, is continuous in the point $\Sigma = I_{K\times K}$, so we can pick $\eps$ small enough so that
\[
\bP\Bigl(\max_{k\in[K]}\bG_k\le x\Bigr)\le 2\,\Phi(x)^K,
\]
where $\Phi$ denotes the standard normal CDF. Fix $\delta > 0$. Since such a construction is possible for any $K$, we can construct $\theta_1,\dots,\theta_K$ in such a way that $\max_{k\in[K]}\bG_k > x$ with probability at least $1-\delta$. Following the arguments in the proof of Theorem~\ref{thm:diverge}, we find that, for any $0<M,2c\le x^2$,
\begin{align*}
&\liminf_{n\to\infty}\bP\Big(\sup_{g\in\cG_n(c)} n\chi^2(f_g,f_{g_0})>M\Big) 
\\
&\ge \liminf_{n\to\infty}
\Big\{
\bP\Big(n\chi^2(\hat f_K,f_{g_0})>M\Big)
-\bP\Big(\ell_n(\hat f_K)-\ell_n(f_{g_0})\le c\Big) \Big\}
\\
& \ge \bP\Big(\max_{k\in[K]}\bG_k>\sqrt{M}\Big)
-\bP\Big(\max_{k\in[K]}\bG_k\le \sqrt{2c}\Big)
\\
& \geq 1-2\delta. 
\end{align*}
Since $x$ can be chosen arbitrarily large and $\delta$ arbitrarily small, the proof is complete.
\end{proof}

\begin{proof}[Proof of Proposition~\ref{pro:divergence} (the Poisson case)] 
We now turn to the Poisson setting and assume \ref{(GP)} with $d=1$ and $b=0$. Fixing some $\theta_0>0$, we take $g_0=\delta_{\theta_0}$, so that $f_{g_0}=p_{\theta_0}$. As before, we restrict attention to a two-point mixture subfamily, and the corresponding limiting distribution for the quantities of interest retains the form~\eqref{eq:limit}, but with a different covariance structure:
\[
\mathrm{Cov}(\bG_{k_1},\bG_{k_2})=\frac{\exp\big((\theta_{k_1}-\theta_0)(\theta_{k_2}-\theta_0)/\theta_0\big)-1}{\sqrt{\big[\exp\big((\theta_{k_1}-\theta_0)^2/\theta_0\big)-1\big]\big[\exp\big((\theta_{k_2}-\theta_0)^2/\theta_0\big)-1\big]}}.
\]
If $\Theta$ is unbounded, we may again choose $\theta_1,\dots,\theta_K$ sufficiently separated so that the pairwise correlations are arbitrarily small, and the same argument as in the Gaussian case applies.
\end{proof}

\subsection{Proof of Theorem~\ref{thm:converge2}}
We apply Lemma~\ref{lem:CVR} and Theorem~\ref{thm:ASY}. Assumption~\ref{(SS)} holds by definition. Since \(f_{g_0}\) is fully supported on \(\{1,\dots,K\}\), Assumption~\ref{(A1)} is immediate: for any \(f\in\cF\backslash f_{g_0}\),
\[
\chi^2(f,f_{g_0})
=\sum_{k=1}^K f_{g_0}(k)\left(\frac{f(k)}{f_{g_0}(k)}-1\right)^2
\in(0,\infty).
\]

It remains to verify Assumption~\ref{(A2)}. Any \(f\in\cF\) can be written as
\[
f(\cdot)
=
\left(1+\sum_{k=1}^K \frac{f(k)-f_{g_0}(k)}{f_{g_0}(k)}\,h_k(\cdot)\right) f_{g_0}(\cdot),
\]
where \(h_k(k')\coloneqq \mathbf{1}_{\{k'=k\}}\) for \(1\le k,k'\le K\). Consequently,
\[
\cS
\subseteq
\left\{
\frac{\sum_{k=1}^K c_k h_k}{\bigl\|\sum_{k=1}^K c_k h_k\bigr\|_{L_2(f_{g_0}\dif\mu)}}:\ c\in\bS^{K-1}
\right\}.
\]
Moreover, since \(f_{g_0}\) is fully supported,
\[
\inf_{c\in\bS^{K-1}}
\left\|\sum_{k=1}^K c_k h_k\right\|_{L_2(f_{g_0}\dif\mu)}
=
\min_{k\in[K]}\sqrt{f_{g_0}(k)}
>0.
\]
Thus, 
\[
\cS \subseteq \frac{1}{\min_{k\in[K]}\sqrt{f_{g_0}(k)}}
\left\{
\sum_{k=1}^K c_k h_k: \ c\in\bS^{K-1}
\right\},
\]
where the right-hand side is a scaled version of the convex hull of the finite collection \(\{h_k:\ k\in[K]\}\subseteq L_2(f_{g_0}\dif\mu)\). 
It follows that \(\cS\) is \(f_{g_0}\dif\mu\)-Donsker by Theorem 2.10.3 in \cite{van1996weak} and admits an \(f_{g_0}\dif\mu\)-square-integrable envelope, verifying Assumption~\ref{(A2)}. \hfill\(\Box\)

\subsection{Proofs of two key lemmas underlying the parametric phenomena}

\begin{proof}[Proof of Lemma~\ref{lem:CVR}] The proof utilizes ideas from~\cite{gassiat2002likelihood} but makes adjustments accounting for the fact that we deal with approximate NPMLE. Using the inequality \(\log(1+x)\le x-x_-^2/2\) for \(x\in(-1,\infty)\), where \(x_-\coloneqq\max\{-x,0\}\), we have, for any \(f\in\cF_n\backslash f_0\),
\[
\begin{aligned}
-c_n \le \ell_n(f)-\ell_n(f_0)
&=\sum_{i=1}^n\log\Big(1+\chi(f,f_0)s_f(X_i)\Big)\\
&\le \chi(f,f_0)\sum_{i=1}^n s_f(X_i)
-\frac{1}{2}\chi^2(f,f_0)\sum_{i=1}^n\big[(s_f(X_i))_-\big]^2\\
&=\sqrt{n}\chi(f,f_0)\cdot \frac{1}{\sqrt{n}}\sum_{i=1}^n s_f(X_i)
-\frac{1}{2}n\chi^2(f,f_0)\cdot \frac{1}{n}\sum_{i=1}^n\big[(s_f(X_i))_-\big]^2,
\end{aligned}
\]
where we used the fact that \(\chi(f,f_0)s_f(X_i)=f(X_i)/f_0(X_i)-1>-1\). Viewing the last display as a quadratic inequality in \(\sqrt{n}\chi(f,f_0)\), and recalling that we assumed $c_n \geq 0$, we obtain
\[
\begin{aligned}
\sup_{f\in\cF_n}\sqrt{n}\chi(f,f_0)
&\le \sup_{f\in\cF_n\backslash f_0}
\frac{\frac{1}{\sqrt{n}}\sum_{i=1}^n s_f(X_i)
+\sqrt{\Big(\frac{1}{\sqrt{n}}\sum_{i=1}^n s_f(X_i)\Big)^2
+2c_n\frac{1}{n}\sum_{i=1}^n[(s_f(X_i))_-]^2}}
{\frac{1}{n}\sum_{i=1}^n[(s_f(X_i))_-]^2}\\
&\le
\frac{\sup_{s\in\cS}\Big(\frac{1}{\sqrt{n}}\sum_{i=1}^n s(X_i)
+\Big|\frac{1}{\sqrt{n}}\sum_{i=1}^n s(X_i)\Big|
+\sqrt{2c_n\frac{1}{n}\sum_{i=1}^n[(s(X_i))_-]^2}\Big)}
{\inf_{s\in\cS}\frac{1}{n}\sum_{i=1}^n[(s(X_i))_-]^2}.
\end{aligned}
\]
The first two terms in the numerator are \(O_\bP(1)\) by the Donsker assumption in \ref{(A2)}, noting that \(s(X_i)\) are centered under \(f_0\).
For the third term and the denominator, by Example 2.10.7 and Lemma 2.10.14 of \citet{van1996weak}, the class \(\{(s_-)^2:s\in\cS\}\) is \(f_0\dif\mu\)-Glivenko--Cantelli in Probability. Moreover, we must have
\[
\inf_{s\in\cS}\int (s_-)^2\,f_0\,\dif\mu>0.
\]
Otherwise, there would exist a sequence \(\{s_n\}_{n\in\bN}\subseteq\cS\) with \(\int [(s_n)_-]^2 f_0\,\dif\mu\to0\). Since
\(\int (s_n)_+ f_0\,\dif\mu-\int (s_n)_- f_0\,\dif\mu=\int s_n f_0\,\dif\mu=0\),
it follows that \(s_n\to0\) in \(L_1(f_0\dif\mu)\). The square-integrable envelope assumption in \ref{(A2)} then implies \(s_n\to0\) in \(L_2(f_0\dif\mu)\), contradicting \(\int s_n^2 f_0\,\dif\mu=1\).
As a result, the denominator is bounded away from zero in probability. Combining these observations yields \eqref{eq:CVR}.
\end{proof}

\begin{proof}[Proof of Lemma~\ref{lem:MCL}]
Define \(M\coloneqq\sup_{\theta\in\Theta}|\theta-\theta_0|\) and \(\Delta_g\coloneqq\max_{j\in[2J]}|m_{j,g}-m_{j,g_0}|\). We will prove the slightly stronger bound
\begin{equation}\label{eq:MCL-tmp}
|m_{k,g}-m_{k,g_0}|\le (k-2J)(M+1)^{2J(k-2J)+1}\Delta_g. 
\end{equation}
Let \(\theta_1,\dots,\theta_J\) be the support points of \(g_0\). We will repeatedly use the representation
\begin{equation}\label{eq:help1mom}
\prod_{j=1}^J(\theta-\theta_j)^2 
= \prod_{j=1}^{2J}(\theta-\theta_0+\theta_0-\kappa_j) 
= \sum_{j=0}^{2J} (\theta-\theta_0)^{2J-j}\sum_{s \subseteq [2J];\, |s| = j} \prod_{i \in s} (\theta_0 - \kappa_i), 
\end{equation}
where \(\kappa_{2j}=\kappa_{2j-1}=\theta_j\), along with the bound
\begin{equation}\label{eq:help2mom}
\Big| \sum_{s \subseteq [2J]:\, |s| = j} \prod_{i \in s} (\theta_0 - \kappa_i) \Big| 
\le \binom{2J}{j} M^{j}.
\end{equation}
We begin by noting that
\[
\begin{aligned}
\left|\int(\theta-\theta_0)^{k-2J}\prod_{j=1}^J(\theta-\theta_j)^2\,\dif(g-g_0)(\theta)\right|
&=
\left|\int(\theta-\theta_0)^{k-2J}\prod_{j=1}^J(\theta-\theta_j)^2\,\dif g(\theta)\right|
\\
&\le M^{k-2J}\int\prod_{j=1}^J(\theta-\theta_j)^2\,\dif g(\theta)
\\
&= M^{k-2J}\left|\int\prod_{j=1}^J(\theta-\theta_j)^2\,\dif(g-g_0)(\theta)\right|
\\
&\le M^{k-2J}\sum_{j=0}^{2J}\binom{2J}{j}M^{j}\left|\int(\theta-\theta_0)^{2J-j}\,\dif(g-g_0)(\theta)\right|
\\
&\le (M+1)^{2J}M^{k-2J}\Delta_g,
\end{aligned}
\]
where we used \eqref{eq:help1mom} and \eqref{eq:help2mom} in the second inequality, and the identity
\[
\sum_{j=0}^{2J}\binom{2J}{j}M^{j}=(M+1)^{2J}
\]
in the last step.

To proceed, recall that \(k>2J\) and observe that
\[
\begin{aligned}
|m_{k,g}-m_{k,g_0}|
&=\left|\int(\theta-\theta_0)^k\,\dif(g-g_0)(\theta)\right|
\\
&\le (M+1)^{2J}M^{k-2J}\Delta_g
+\left|\int\Big((\theta-\theta_0)^k-(\theta-\theta_0)^{k-2J}\prod_{j=1}^J(\theta-\theta_j)^2\Big)\,\dif(g-g_0)(\theta)\right|
\\
&\le (M+1)^{2J}M^{k-2J}\Delta_g
+\sum_{j=1}^{2J}\binom{2J}{j}M^{j}\left|\int(\theta-\theta_0)^{k-j}\,\dif(g-g_0)(\theta)\right|
\\
&\le (M+1)^{2J}M^{k-2J}\Delta_g
+(M+1)^{2J}\sup_{j\in[2J]}|m_{k-j,g}-m_{k-j,g_0}|,
\end{aligned}
\]
where we again used \eqref{eq:help1mom} and \eqref{eq:help2mom} in the second inequality.

We now prove \eqref{eq:MCL-tmp} by induction. For the base case \(k=2J+1\), the preceding bound gives
\[
\begin{aligned}
|m_{2J+1,g}-m_{2J+1,g_0}|
&\le (M+1)^{2J}M\Delta_g+(M+1)^{2J}\Delta_g\\
&=(M+1)^{2J+1}\Delta_g.
\end{aligned}
\]
Assume \eqref{eq:MCL-tmp} holds for all integers from \(2J+1\) up to \(k-1\). Then,
\[
\begin{aligned}
|m_{k,g}-m_{k,g_0}|
&\le (M+1)^{2J}M^{k-2J}\Delta_g
+(k-1-2J)(M+1)^{2J(k-2J)+1}\Delta_g\\
&\le (k-2J)(M+1)^{2J(k-2J)+1}\Delta_g,
\end{aligned}
\]
where in the last inequality we used
\[
2J(k-2J)+1-k=(2J-1)k-4J^2+1 \ge (2J-1)(2J+1)-4J^2+1=0.
\]
This completes the proof.
\end{proof}

\subsection{Proof of Theorem~\ref{thm:ASY}} \label{sec:proofASY}

\begin{lemma}\label{lem:LOW}
Under \ref{(SS)} and \ref{(A1)}, for any \(s\in\cS\),
\begin{equation}\label{eq:LOW}
\sup_{f\in\cF}\ell_n(f)-\ell_n(f_0)\ge \frac{1}{2}\bigl[(\bG_n(s))_+\bigr]^2+o_{\bP}(1).
\end{equation}
\end{lemma}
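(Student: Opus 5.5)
Fix $s\in\cS$, so $s=s_f$ for some $f\in\cF\backslash f_0$. By \ref{(SS)}, the whole segment $f_t\coloneqq(1-t)f_0+tf$, $t\in[0,1]$, lies in $\cF$, and
\[
\frac{f_t}{f_0}-1=t\,\chi(f,f_0)\,s.
\]
Writing $\tau\coloneqq t\chi(f,f_0)$, this one-dimensional submodel gives
\[
\ell_n(f_t)-\ell_n(f_0)=\sum_{i=1}^n\log\bigl(1+\tau s(X_i)\bigr),
\]
and the left side of \eqref{eq:LOW} dominates this for every admissible $\tau\in[0,\chi(f,f_0)]$. I will choose a data-dependent $\tau$ and do a second-order expansion. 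On the event $\bG_n(s)\le0$ the right side of \eqref{eq:LOW} is $o_\bP(1)$, and $\tau=0$ gives a lower bound of $0$. So only the event $\bG_n(s)>0$ needs work.

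On $\{\bG_n(s)>0\}$, take
\[
\hat\tau\coloneqq\frac{\bG_n(s)_+}{\sqrt n\,\frac1n\sum_i s(X_i)^2}.
\]
Since $s$ is fixed, $\int s^2f_0\,\dif\mu=1$ and $\bE s(X_1)=0$, the CLT and LLN give $\bG_n(s)=O_\bP(1)$ and $\frac1n\sum s^2(X_i)\to1$. Hence $\hat\tau=O_\bP(n^{-1/2})$, and $\hat\tau\le\chi(f,f_0)$ with probability tending to one; \ref{(A1)} guarantees $\chi(f,f_0)>0$. I will use the elementary inequality
\[
\log(1+x)\ge x-\frac{x^2}{2}-|x|^3 \quad\text{for } x\ge -\tfrac12,
\]
or an equivalent bound such as $\log(1+x)\ge x-\tfrac{x^2}{2(1-|x|)}$. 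This yields
\[
\sum_i\log\bigl(1+\hat\tau s(X_i)\bigr)\ge \sqrt n\,\hat\tau\,\bG_n(s)-\frac{n\hat\tau^2}{2}\cdot\frac1n\sum_i s^2(X_i)-R_n
=\frac{\bigl[(\bG_n(s))_+\bigr]^2}{2\,\frac1n\sum_i s^2(X_i)}-R_n.
\]
The first term is $\tfrac12[(\bG_n(s))_+]^2+o_\bP(1)$.

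The main obstacle is the remainder. This requires $\max_i\hat\tau|s(X_i)|\to0$ in probability, so that the expansion is valid for all $i$, and also $R_n\lesssim\hat\tau^3\sum_i|s(X_i)|^3=o_\bP(1)$. A fixed square-integrable $s$ need not have a third moment, so I will not bound $\sum|s|^3$ directly. Instead I will use $\max_i|s(X_i)|=o_\bP(\sqrt n)$, which holds for any $L_2$ variable. Then
\[
\hat\tau^3\sum_i|s(X_i)|^3\le \hat\tau\max_i|s(X_i)|\cdot\hat\tau^2\sum_i s^2(X_i)=o_\bP(1)\cdot O_\bP(1).
\]
The same max bound gives $\max_i\hat\tau|s(X_i)|=o_\bP(1)$, so that $\hat\tau s(X_i)\ge-\tfrac12$ for all $i$ with probability tending to one. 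I will take a little care with the negative side, since the lower-bound inequality needs $x$ bounded away from $-1$; on the complementary event of vanishing probability the bound is irrelevant for an $o_\bP(1)$ statement. Combining the two events gives \eqref{eq:LOW}.
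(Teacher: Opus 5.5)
Your proposal is correct and takes essentially the same route as the paper. Both arguments restrict to the segment between $f_0$ and $f$ guaranteed by \ref{(SS)}, plug in a step of order $(\bG_n(s))_+/\sqrt n$, and expand $\log(1+x)$ to second order; the remainder is then killed using $\max_i|s(X_i)|=o_\bP(\sqrt n)$ and the law of large numbers for $\frac1n\sum_i s^2(X_i)$. The differences are cosmetic: the paper uses $t=(\bG_n(s))_+/\sqrt n$ without normalizing by the empirical second moment, and it writes the expansion with a generic remainder $R(x)\to 0$ instead of your explicit cubic lower bound.
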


\begin{proof} By \ref{(SS)}, for any \(s=s_f\in\cS\) there is an associated submodel \(\{f_t\}_{t\in[0,\tau]}\subseteq\cF\) given by
\[
f_t\coloneqq\left(1-\frac{t}{\chi(f,f_0)}\right)f_0+\frac{t}{\chi(f,f_0)}f,
\]
where \(\tau\coloneqq \chi(f,f_0)>0\) by \ref{(A1)}. Since
\[
\sup_{f\in\cF}\ell_n(f)-\ell_n(f_0)\ge \sup_{t\in[0,\tau]}\ell_n(f_t)-\ell_n(f_0),
\]
it suffices to lower bound the right-hand side.

Set \(\hat t_n\coloneqq (\bG_n(s))_+\). By square integrability of \(s\), we have \(\hat t_n=O_\bP(1)\), hence \(\hat t_n/\sqrt{n}=o_\bP(1)\). In particular,
\begin{equation}\label{eq:hattn}
\bP\big(\hat t_n/\sqrt{n}\in[0,\tau]\big)\to 1.
\end{equation}
By definition and a Taylor expansion,
\[
\begin{aligned}
\sup_{t\in[0,\tau]}\ell_n(f_t)-\ell_n(f_0)
&\ge \ell_n\!\left(f_{\hat t_n/\sqrt{n}}\right)-\ell_n(f_0)+o_\bP(1)\\
&=\sum_{i=1}^n \log\left(1+\frac{\hat t_n}{\sqrt{n}}\,s(X_i)\right)+o_\bP(1)\\
&=\frac{\hat t_n}{\sqrt{n}}\sum_{i=1}^n s(X_i)
-\frac{\hat t_n^2}{2n}\sum_{i=1}^n s^2(X_i)
+\frac{\hat t_n^2}{n}\sum_{i=1}^n s^2(X_i)\,
R\!\left(\frac{\hat t_n}{\sqrt{n}}\,s(X_i)\right)
+o_\bP(1)\\
&=\frac{1}{2}\bigl[(\bG_n(s))_+\bigr]^2
+\frac{\hat t_n^2}{n}\sum_{i=1}^n s^2(X_i)\,
R\!\left(\frac{\hat t_n}{\sqrt{n}}\,s(X_i)\right)
+o_\bP(1),
\end{aligned}
\]
where \(R\) is a deterministic remainder function satisfying \(R(x)\to 0\) as \(x\to 0\).
In the first line we used \eqref{eq:hattn}. In the second line we used that \(s_{f_t}=s_f\) for all \(t\in[0,\tau]\) and \(\chi(f_t,f_0)=t\). In the last line we used \(\int s^2 f_0\,\dif\mu=1\) together with the law of large numbers.

Finally, fix \(\varepsilon>0\). By dominated convergence,
\[
\bP\left(\frac{1}{\sqrt{n}}\max_{i\in[n]}|s(X_i)|\ge\varepsilon\right)
\le n\,\bP\big(s^2(X_1)\ge n\varepsilon^2\big)
\le \varepsilon^{-2}\int_{\{x:\ s^2(x)>n\varepsilon^2\}} s^2(x)\,f_0(x)\,\dif\mu(x)
=o(1).
\]
Therefore, the argument of \(R\big(\frac{\hat t_n}{\sqrt{n}}s(X_i)\big)\) is \(o_\bP(1)\) uniformly in \(i\), and
\[
\left|
\frac{\hat t_n^2}{n}\sum_{i=1}^n s^2(X_i)\,
R\!\left(\frac{\hat t_n}{\sqrt{n}}\,s(X_i)\right)
\right|
\le
\hat t_n^2\left(\frac{1}{n}\sum_{i=1}^n s^2(X_i)\right)
\max_{i\in[n]}\left|R\!\left(\frac{\hat t_n}{\sqrt{n}}\,s(X_i)\right)\right|
=o_\bP(1).
\]
Combining the displays yields \eqref{eq:LOW}.
\end{proof}

\bigskip

\begin{proof}[Proof of Theorem \ref{thm:ASY}] 
We start with the ``$\ge$'' direction of the second equality in \eqref{eq:ASY}. By the Donsker assumption \ref{(A2)} and the discussion in Section 2.1.2 of \cite{van1996weak}, the class $\cS$ is totally bounded in $L_2(f_0\,\dif\mu)$. Therefore, for any $m>0$, there exists a finite $1/m$-net of $\cS$ with respect to this norm; denote one such net by $\cS_m$. Throughout the proof, we abbreviate $[(\bG_n(s))_+]^2 = (\bG_n(s))_+^2$ to lighten the notation.

Fix an arbitrary $\eps>0$. By the union bound,
\[
\bP\left(\sup_{f\in\cF}\ell_n(f)-\ell_n(f_0)\le\frac{1}{2}\max_{s\in\cS_m}(\bG_n(s))_+^2-\eps\right)
\le
\sum_{s\in\cS_m}\bP\left(\sup_{f\in\cF}\ell_n(f)-\ell_n(f_0)\le\frac{1}{2}(\bG_n(s))_+^2-\eps\right).
\]
By Lemma \ref{lem:LOW}, the right-hand side converges to zero as $n\to\infty$. To conclude, consider the decomposition
\[
\begin{aligned}
\bP\left(\sup_{f\in\cF}\ell_n(f)-\ell_n(f_0)\le\frac{1}{2}\sup_{s\in\cS}(\bG_n(s))_+^2-\eps\right)
&\le
\bP\left(\sup_{f\in\cF}\ell_n(f)-\ell_n(f_0)\le\frac{1}{2}\max_{s\in\cS_m}(\bG_n(s))_+^2-\frac{\eps}{2}\right)
\\
&\quad+
\bP\left(\frac{1}{2}\max_{s\in\cS_m}(\bG_n(s))_+^2\le\frac{1}{2}\sup_{s\in\cS}(\bG_n(s))_+^2-\frac{\eps}{2}\right).
\end{aligned}
\]
The first term is handled by the preceding result for $\cS_m$. For the second term, note that
\[
\bP\left(\sup_{s\in\cS_m}(\bG_n(s))_+^2\le\sup_{s\in\cS}(\bG_n(s))_+^2-\eps\right)
\le
\bP\left(
\sup_{\substack{s_1,s_2\in\cS:\\ \|s_1-s_2\|_2\le 1/m}}
\left|(\bG_n(s_1))_+^2-(\bG_n(s_2))_+^2\right|\ge\eps
\right).
\]
Next, observe that
\[
\sup_{\substack{s_1,s_2\in\cS:\\ \|s_1-s_2\|_2\le 1/m}}
\left|(\bG_n(s_1))_+^2-(\bG_n(s_2))_+^2\right|
\le
2\left(\sup_{s\in\cS}|\bG_n(s)|\right)
\left(
\sup_{\substack{s_1,s_2\in\cS:\\ \|s_1-s_2\|_2\le 1/m}}
|\bG_n(s_1)-\bG_n(s_2)|
\right).
\]
Consequently, since the sequence $\{\bG_n\}_{n\in\bN}$ is asymptotically uniformly $L_2(f_0\,\dif\mu)$-equicontinuous in probability (see Example 1.5.10 in \cite{van1996weak}),
\[
\lim_{m\to\infty}\limsup_{n\to\infty}
\bP\left(
\sup_{\substack{s_1,s_2\in\cS:\\ \|s_1-s_2\|_2\le 1/m}}
\left|(\bG_n(s_1))_+^2-(\bG_n(s_2))_+^2\right|\ge\eps
\right)=0.
\]
This establishes the ``$\ge$'' direction of the second equality in \eqref{eq:ASY}.

To prove the ``$\le$'' direction of the second equality in~\eqref{eq:ASY}, we apply a Taylor expansion argument similar to that used in the proof of Lemma \ref{lem:LOW}. Specifically, for any $f\in\cF\backslash f_0$,
\[
\begin{aligned}
\ell_n(f)-\ell_n(f_0)
&=\sum_{i=1}^n \log\Big(1+\chi(f,f_0)s_f(X_i)\Big)\\
&=\chi(f,f_0)\sum_{i=1}^n s_f(X_i)
-\frac{1}{2}\chi^2(f,f_0)\sum_{i=1}^n s_f^2(X_i)\\
&\quad+\chi^2(f,f_0)\sum_{i=1}^n s_f^2(X_i)\,R\Big(\chi(f,f_0)s_f(X_i)\Big),
\end{aligned}
\]
where $R$ is a deterministic function satisfying $R(x)\to 0$ as $x\to 0$. Let $S$ be an $f_0\,\dif\mu$-square-integrable envelope for $\cS$. By the union bound and the dominated convergence theorem, for any fixed $\eps>0$,
\[
\begin{aligned}
\bP\left(\frac{1}{\sqrt{n}}\sup_{f\in\cF\backslash f_0}\max_{i\in[n]}|s_f(X_i)|\ge\eps\right)
&\le
n\,\bP\left(S^2(X_1)\ge n\eps^2\right)\\
&\le
\frac{1}{\eps^2}\int_{\{x:\,S^2(x)>n\eps^2\}} S^2(x)\,f_0(x)\,\dif\mu(x)
=o(1),
\end{aligned}
\]
as $n\to\infty$. Recall the notation $\cF_n = \cF_n(c_n)$ from Lemma~\ref{lem:CVR}. For the subsequent arguments, fix $c_0 > 0$ and take $c_n\equiv c_0>0$. By Lemma~\ref{lem:CVR} we have
\[
\sup_{f\in\cF_n}\chi(f,f_0)=O_{\bP}(n^{-1/2}).
\]
Thus, defining
\[
Y_n \coloneqq \Big(\sup_{f\in\cF_n}\chi(f,f_0)\Big)
\Big(\sup_{f\in\cF\backslash f_0}\max_{i\in[n]}|s_f(X_i)|\Big)
=o_{\bP}(1),
\]
we obtain
\[
\begin{aligned}
\sup_{f\in\cF_n\backslash f_0}
\left|
\chi^2(f,f_0)\sum_{i=1}^n s_f^2(X_i)\,
R\Big(\chi(f,f_0)s_f(X_i)\Big)
\right|
&\le
\Big(n\sup_{f\in\cF_n}\chi^2(f,f_0)\Big)
\left(\frac{1}{n}\sum_{i=1}^n S^2(X_i)\right)
\sup_{|x|\le Y_n}|R(x)|\\
&=o_{\bP}(1).
\end{aligned}
\]
Moreover, $\cS^2$ is $f_0\,\dif\mu$-Glivenko--Cantelli since $\cS$ is $f_0\,\dif\mu$-Donsker under \ref{(A2)}; see Lemma 2.10.14 in \cite{van1996weak}. Hence,
\[
\frac{1}{n}\sum_{i=1}^n s_f^2(X_i)=1+o_{\bP}(1),
\]
uniformly in $f\in\cF\backslash f_0$. Thus, 
\begin{equation}\label{eq:LikelihoodExpansion}
\ell_n(\hat f)-\ell_n(f_0)
=
\chi(\hat f,f_0)\sum_{i=1}^n s_{\hat f}(X_i)
-\frac{1}{2}n\chi^2(\hat f,f_0)
+o_{\bP}(1).
\end{equation}
Finally, maximizing the leading term $\chi(\hat f,f_0)\sum_{i=1}^n s_{\hat f}(X_i)
-\frac{1}{2}n\chi^2(\hat f,f_0)$ on the right-hand side over $\chi(\hat f,f_0)\ge 0$ yields the upper bound
\[
\ell_n(\hat f)-\ell_n(f_0)
\le
\frac{1}{2}\sup_{s\in\cS}(\bG_n(s))_+^2+o_{\bP}(1).
\]
Together with the lower bound established in the first part of the proof, this completes the proof for the second equality in \eqref{eq:ASY}.

To prove the first equality in \eqref{eq:ASY}, we start from the bound
\[
\begin{aligned}
\sqrt{n}\,\chi(\hat f,f_0)\,\sup_{s\in\cS}(\bG_n(s))_+
-\frac{1}{2}n\chi^2(\hat f,f_0)
&\ge
\chi(\hat f,f_0)\sum_{i=1}^n s_{\hat f}(X_i)
-\frac{1}{2}n\chi^2(\hat f,f_0)\\
&=
\ell_n(\hat f)-\ell_n(f_0)+o_{\bP}(1)\\
&=
\frac{1}{2}\sup_{s\in\cS}(\bG_n(s))_+^2+o_{\bP}(1)
\end{aligned}
\]
where the first equality follows from~\eqref{eq:LikelihoodExpansion} and the last from the second equality in~\eqref{eq:ASY} which we already proved above. Rearranging this inequality gives
\[
\Big(\sqrt{n}\chi(\hat f,f_0)-\sup_{s\in\cS}(\bG_n(s))_+\Big)^2 \le o_{\bP}(1),
\]
and hence the first equality in \eqref{eq:ASY} follows.
\end{proof}

\end{document}